\documentclass[twoside, 10pt]{amsart}
\usepackage[letterpaper,hmargin=1.00in,vmargin=1.25in]{geometry}
\usepackage{amscd}
\usepackage{verbatim}
\usepackage{color}
\usepackage{comment}
\input xy			
\xyoption{all}
\usepackage{amssymb}
\usepackage{hyperref}

\DeclareMathAlphabet{\cat}{OT1}{cmss}{m}{sl}
\newtheorem*{theorem*}{Theorem}
\newtheorem{theorem}{Theorem}[section]

\newtheorem{proposition}[theorem]{Proposition}
\newtheorem{lemma}[theorem]{Lemma}
\newtheorem{corollary}[theorem]{Corollary}

\theoremstyle{definition}
\newtheorem{remark}[theorem]{Remark}



\newcommand{\diag}{\operatorname{diag}}
\newcommand{\diam}{\operatorname{Diam}}
\newcommand{\mat}{\operatorname{M}}
\newcommand{\Ch}{\operatorname{char}}
\newcommand{\SL}{\operatorname{SL}}
\newcommand{\GL}{\operatorname{GL}}
\newcommand{\PSL}{\operatorname{PSL}}
\newcommand{\rank}{\operatorname{rank}}

\title[Commuting involution graphs of linear groups] 
{Commuting involution graphs of linear groups}

\author
[S. Baek] {Sanghoon Baek}

\author
[C. Han]{Changhyouk Han}

\address{Department of Mathematical Sciences, KAIST, 291 Daehak-ro, Yuseong-gu, Daejeon, 305-701, Republic of Korea}

\email {sanghoonbaek@kaist.ac.kr}
\email {vicent@kaist.ac.kr}
\begin{document}

\begin{abstract}
In this paper, we determine the diameter of the commuting involution graphs of special and general linear groups over an arbitrary field. It turns out that our results also determine the diameter for certain projective special linear groups over finite fields. Moreover, we find the diameter of the commuting graphs of general linear groups on the set of all involutions over a field of characteristic $2$, which completes the diameter of general linear groups on the set of all involutions. As an application, we classify the structure of the four-dimensional linear groups over finite fields according to the distance from a fixed involution. 
\end{abstract}
\maketitle

\tableofcontents

\section{Introduction}\label{intro}
Let $X$ be a subset of a group $G$. The \emph{commuting graph} of $G$ on $X$, denoted by $\Gamma(G, X)$, is the graph with the vertex set $X$ in which distinct elements $x, y\in X$ are connected by an edge if $xy=yx$. In particular, if $X$ is a conjugacy class of involutions, the corresponding graph is called the \emph{commuting involution graph}. The \emph{distance} $d(x, y)$ between $x$ and $y$ is the number of edges in a shortest path connecting them. The \emph{diameter} of a connected graph $\Gamma(G, X)$, denoted by $\diam(\Gamma(G, X)$), is the maximum distance over all pairs of vertices.

The commuting graph on the set of involutions was first studied by Brauer and Fowler in \cite{BF}, where they show that the commuting graph of a finite group is connected with diameter at most $3$ if it has more than one conjugacy class.  This result is one of the key ingredients in the recent work of Morgan and Parker \cite{MP} on an upper bound of $\diam(\Gamma(G, X))$ of a finite centerless group $G$ on $X=G\backslash \{1\}$. Indeed, this result can be viewed as a generalization of Segev and Seitz's result \cite{SS} on finite simple classical groups, which in turn have played a major role in their proof of the Margulis-Platanov conjecture for
inner forms of anisotropic
groups of type $A_{n}$.

The concept of the commuting involution graph was introduced by Fischer \cite{F}, where he investigated three-transposition groups generated by conjugacy classes of involutions, which led eventually to the discovery of three new sporadic groups. An intensive study of commuting involution graphs was conducted by Bates, Bundy, Perkins, and Rowley. For instance, the commuting involution graphs for finite Coxeter groups, symmetric groups, and special linear groups have been investigated in \cite{BBPR03a, BBPR03b, BBPR}. See also more recent work on three-dimensional unitary groups \cite{Eve}.

The main theme of the present paper is to explore the commuting involution graphs of general and special linear groups over an arbitrary field. As mentioned above, this work was initiated in \cite{BBPR}, and in particular \cite{BBPR} provides an upper bound for the diameter of the commuting involution graphs for   special linear groups over a field of characteristic $2$ \cite[Theorem 1.3]{BBPR}. The present paper establishes the diameter over an arbitrary field by improving this upper bound.

Let $x$ be an involution (i.e., an element of order $2$) in $\GL_{n}(F)$. Then, the element $x$ is similar to
\begin{equation*}
I(n, k):=\begin{cases}
	\left(\begin{array}{c|c}
		I_{n-k}\\
		\hline  & -I_{k}
	\end{array}\right) \text{ if } \Ch(F)\neq 2, \\

	\left(\begin{array}{c|c}
		I_{n-k}\\
		 \hline		
	\begin{array}{c|c}
		& I_{k}\end{array} & I_{k} 
\end{array}\right) \text{ if } \Ch(F)= 2,
\end{cases}
\end{equation*}
for some $1\leq k\leq n$  in the first case and $1\leq k\leq n/2$ in the second case. The conjugacy class of $I(n, k)$ will be denoted by $X_{k}$. Since $X_n$ consists of a single element $-I_n$ if $\Ch(F)\neq 2$, we will not consider this trivial case.

In this paper, we completely determine the diameter of commuting involution graph $\Gamma(G,X_{k})$ for $G=\GL_{n}(F)$ and $\SL_{n}(F)$ over an arbitrary field $F$ as follows:

\begin{theorem}\label{mainthm}
Let $G=\GL_{n}(F)$ over an arbitrary field $F$ and let $X_{k}$ be the class of involutions for $1\leq k\leq n-1$  in case  $\Ch(F)\neq 2$ and $1\leq k\leq n/2$  in case  $\Ch(F)=2$. Then, for any $n\geq 3$ and any $X_k,$ the commuting involution graph $\Gamma(G,X_{k})$ is connected of diameter at most $4$. More precisely,
\begin{enumerate}
	\item[(i)] If $4k\leq n$ or $4(n-k)\leq n$, then $\diam\Gamma(G,X_{k})=2.$
	
	\item[(ii)] If $2k<n<4k$ or $2(n-k)<n<4(n-k)$ or $n=2k$ with even $k$, then $\diam\Gamma(G,X_{k})=3.$
	 
	  \item[(iii)] If $n=2k$ with odd $k$, then $\diam\Gamma(G,X_{k})=3$ for $ \Ch(F)\neq 2 $ and  $\diam\Gamma(G,X_{k})=4$ for $ \Ch(F)=2. $ 	
\end{enumerate}
\end{theorem}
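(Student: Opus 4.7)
The plan is to reduce the distance $d(x,y)$ in $\Gamma(G,X_k)$ to a combinatorial subset-sum condition on the joint $\langle x,y\rangle$-module structure of $F^n$. For $\Ch(F)\neq 2$ and $x,y\in X_k$, I decompose $F^n$ into the common eigenspaces $A=V^x_+\cap V^y_+$, $B=V^x_+\cap V^y_-$, $C=V^x_-\cap V^y_+$, $D=V^x_-\cap V^y_-$, together with a complementary subspace $E$ that splits as a direct sum of $\langle x,y\rangle$-irreducibles of even $F$-dimension (typically two-dimensional over an algebraically closed field but possibly larger over smaller fields such as $\mathbb{Q}$). The constraint $x,y\in X_k$ imposes $\dim B=\dim C$ together with a linear equation in the piece dimensions equal to $k$. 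An involution $z\in\GL_n(F)$ commuting with both $x$ and $y$ is characterized by an arbitrary involution on each of $A,B,C,D$ together with a choice $\pm I$ on each irreducible summand of $E$; the latter follows from Schur's lemma, since the centralizer of an irreducible $\langle x,y\rangle$-piece is a division ring whose only involutions are $\pm 1$. Consequently $d(x,y)\leq 2$ is equivalent to the resulting subset-sum problem on piece dimensions admitting a solution that yields $z\in X_k$ distinct from $x,y$. In characteristic $2$ the parallel analysis proceeds via the nilpotent parts $N_x=x-I$, $N_y=y-I$ and the flag $(x-I)F^n\subseteq\ker(x-I)$, noting that commutation of $x,y$ is equivalent to $N_xN_y=N_yN_x$.

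For part (i), the hypothesis $4k\leq n$ forces $\dim A\geq n-2k\geq 2k$, which gives enough slack to resolve the subset-sum in every admissible decomposition; the symmetric case $4(n-k)\leq n$ follows by swapping eigenspace roles. For parts (ii) and (iii) with $\Ch(F)\neq 2$, the subset-sum can fail only when $E$ contains an irreducible piece of ``large'' dimension — the extreme being $F^n$ itself $\langle x,y\rangle$-irreducible, which forces $z|_{F^n}=\pm I$ and a $(-1)$-eigenspace of dimension $0$ or $n$, never $k$. In such obstructed pairs I would construct a length-$3$ path $x-w-z-y$ by shifting: choose $w\in X_k\cap C_G(x)$ whose joint structure with $y$ breaks the obstruction, then apply the distance-$\leq 2$ conclusion to $(w,y)$. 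The lower bounds $\diam\geq 3$ in (ii) and (iii) follow by exhibiting such obstructed pairs explicitly (for example, over $\mathbb{Q}$, taking $x,y$ generating a dihedral subgroup acting irreducibly on $F^n$).

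For part (iii) with $\Ch(F)=2$ and $n=2k$ with $k$ odd, the upper bound $\diam\leq 4$ comes from iterating the shift argument to produce a path with two intermediate vertices. For the matching lower bound $\diam\geq 4$, I would identify a $\mathbb{Z}/2$-valued invariant on $X_k\times X_k$ that is preserved along each edge of $\Gamma(G,X_k)$ and that is non-trivial on a suitable pair; a natural candidate is the parity of a combinatorial invariant of the joint Jordan structure of $(N_x,N_y)$, such as the parity of $\rank(N_x+N_y)-k$ or the parity of the number of indecomposable joint summands of a specified type. Verifying the edge-invariance of this parity and producing a pair on which it is non-trivial will be the main analytical step.

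The principal technical obstacle throughout is the characteristic-$2$ setting: the unipotent structure of involutions admits a richer family of indecomposable joint summands in the $F[N_x,N_y]$-module $F^n$ than the semisimple dihedral decomposition gives in characteristic $\neq 2$, and the parity invariant for part (iii) must be set up and verified carefully within that richer framework. Secondary difficulties include handling irreducible $\langle x,y\rangle$-modules of dimension greater than $2$ that appear over non-algebraically-closed fields (these drive the lower bounds in (ii) and (iii)) and the bookkeeping needed to keep intermediate vertices in $X_k$ and distinct from the endpoints in borderline small-$n$ cases.
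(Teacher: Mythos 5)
There is a genuine gap, and it sits at the foundation of your framework. Your reduction of $d(x,y)\leq 2$ to a subset-sum problem assumes that $F^n$ decomposes as $A\oplus B\oplus C\oplus D\oplus E$ with $E$ a direct sum of $\langle x,y\rangle$-irreducibles. But $\langle x,y\rangle$ is in general an infinite dihedral group, and even in characteristic $0$ the module $F^n$ need not be semisimple: already for $n=2$, $x=\diag(1,-1)$ and $y=\left(\begin{smallmatrix}1&0\\a&-1\end{smallmatrix}\right)$ give $xy$ unipotent, the common eigenspaces span only a line, and that line has no invariant complement. This is not a technicality one can absorb into ``irreducibles of dimension greater than $2$'': the pairs realizing distance $\geq 3$ in case (ii) live precisely in the non-semisimple locus. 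If semisimplicity held, your subset-sum would \emph{always} be solvable when $2k<n<4k$ (take $z=-1$ on $C$, $D$ and on all of $E$, $z=+1$ on $A$, $B$; this lands in $X_k$ and differs from $x$ and $y$ whenever $E\neq 0$), so your framework would output $\diam=2$ in case (ii), contradicting the theorem. Relatedly, your proposed obstructed pairs for the lower bound --- $\langle x,y\rangle$ acting irreducibly on all of $F^n$ over $\mathbb{Q}$ --- do not exist over algebraically closed or finite fields (where dihedral irreducibles have dimension at most $2$), yet the theorem asserts $\diam=3$ in case (ii) over every field. The paper's Proposition \ref{odd diam lower} instead builds the obstruction from a matrix $A$ with $A+2I_k$ regular nilpotent, so that the only involution commuting with $A$ is $-I_k$; the mechanism is failure of semisimplicity, not large irreducibles. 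Even your part (i) argument needs the common fixed space $A$ to admit an $\langle x,y\rangle$-invariant complement, which again is not automatic (compare the explicit construction in Corollary \ref{odd diam2}).

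The second substantive gap is the lower bound $\diam\geq 4$ in characteristic $2$ for $n=2k$ with $k$ odd. You propose a $\mathbb{Z}/2$-valued edge-invariant but only name candidates and defer both its edge-invariance and its nontriviality; that deferred step is the entire content of the claim. The paper's Lemma \ref{k odd n 2k even} proves $d(t,t^{T})\geq 4$ directly: it takes arbitrary commuting $x\in\Delta_{1}(t)$ and $y\in\Delta_{1}(t^{T})$, normalizes the off-diagonal blocks, and shows by a rank computation that $\rank(y+I_{n})\leq k-1$, so $y\notin X_{k}$; no global invariant is involved. Finally, all of your upper-bound arguments (the ``shift'' to a length-$3$ or length-$4$ path) are unspecified where the real work lies; the paper's Theorems \ref{odd diam} and \ref{even diam} achieve these bounds through Proposition \ref{lowertriangular} (distance at most $2$, or $3$ in one case, to block lower triangular involutions) followed by a long explicit case analysis reducing an arbitrary $x$ to a form commuting with something block lower triangular. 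As it stands, the proposal would need its structure theory rebuilt around indecomposable (not irreducible) $\langle x,y\rangle$-summands before any of the four parts could be carried out.
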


We would like to point out that our diameter for $\GL_n(F)$ does not depend on the characteristic of the base field $F$ but only on a certain ratio between $n$ and $k$ except in case $n=2k$ with odd $k$. Moreover, as $\Gamma(\GL_{n}(F), X_{k})\simeq \Gamma(\SL_{n}(F), X_{k})$ together with the conjugacy class $X_{k}$ for even $1\leq k\leq n-1$ in case $\Ch(F)\neq 2$, we obtain the same statement for $G=\SL_{n}(F)$.  Hence, we provide the exact value of the diameter of the commuting graph for $\SL_{n}(F)$, which improves the main results on the upper bounds of the diameter over $\Ch(F)=2$ in \cite[Theorem 1.3]{BBPR}.


As an immediate application of Theorem \ref{mainthm}, we obtain the same result for certain projective special linear groups.
\begin{corollary}
Let $G=\PSL_{n}(F)$ over a finite field $F$ with $q$ elements and let $\bar{X}_{k}$ be the corresponding class of involutions for even $1\leq k\leq n-1$  in case  $\Ch(F)\neq 2$ and $1\leq k\leq n/2$  in case  $\Ch(F)=2$. Assume that $\gcd(n, q-1)$ is odd. Then, for any $n\geq 3$ and any $\bar{X}_k,$ the same result holds as in Theorem \ref{mainthm}
\end{corollary}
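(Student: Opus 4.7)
The plan is to reduce the corollary to Theorem \ref{mainthm} by exhibiting a graph isomorphism $\Gamma(\PSL_{n}(F),\bar{X}_{k}) \simeq \Gamma(\SL_{n}(F),X_{k})$, after which the assertion follows from the identification $\Gamma(\GL_{n}(F),X_{k}) \simeq \Gamma(\SL_{n}(F),X_{k})$ already noted in the discussion following the main theorem.

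The key input is that $|Z(\SL_{n}(F))|=\gcd(n,q-1)$ is odd by hypothesis. The first step is to show that every involution $\bar{x}\in\PSL_{n}(F)=\SL_{n}(F)/Z$ admits a \emph{unique} involutory lift in $\SL_{n}(F)$. Indeed, given any lift $x\in\SL_{n}(F)$, the square $x^{2}$ lies in the odd-order group $Z$, hence has a unique square root $w\in\langle x^{2}\rangle$ (namely $w=(x^{2})^{(m+1)/2}$, where $m$ is the order of $x^{2}$), and then $y:=xw^{-1}$ is an involution projecting to $\bar{x}$. Uniqueness follows since any two involutory lifts differ by some $z\in Z$ with $z^{2}=1$, forcing $z=1$ as $|Z|$ is odd.

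Next, I would verify that the resulting bijection $\bar{x}\leftrightarrow x$ between the sets of involutions of $\PSL_{n}(F)$ and $\SL_{n}(F)$ preserves both conjugacy classes and the commuting relation. For conjugacy, if $\bar{x}_{1}=\bar{g}\bar{x}_{2}\bar{g}^{-1}$ in $\PSL_{n}(F)$ with $g\in\SL_{n}(F)$ a lift of $\bar{g}$, then $gx_{2}g^{-1}\in\SL_{n}(F)$ is an involutory lift of $\bar{x}_{1}$, and by uniqueness it must equal $x_{1}$. For commuting, if $\bar{x}_{1}\bar{x}_{2}=\bar{x}_{2}\bar{x}_{1}$, then $x_{1}x_{2}x_{1}^{-1}$ is an involutory lift of $\bar{x}_{2}$, hence equals $x_{2}$ by uniqueness, so $x_{1}x_{2}=x_{2}x_{1}$. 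In particular, $\bar{X}_{k}$ corresponds bijectively to $X_{k}$ (for the $k$ in the relevant range), and the two commuting involution graphs are isomorphic.

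The only non-routine point is setting up the unique-lift correspondence through the odd-order center, and even this is essentially immediate once one notes that squaring is a bijection on a group of odd order. The rest is bookkeeping, and the stated diameter values follow verbatim from Theorem \ref{mainthm} via the chain $\Gamma(\PSL_{n}(F),\bar{X}_{k})\simeq\Gamma(\SL_{n}(F),X_{k})\simeq\Gamma(\GL_{n}(F),X_{k})$.
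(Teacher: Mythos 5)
Your proposal is correct and follows essentially the same route as the paper: both reduce the corollary to a graph isomorphism $\Gamma(\SL_{n}(F),X_{k})\simeq\Gamma(\PSL_{n}(F),\bar{X}_{k})$ exploiting the odd order of the center. The paper phrases this as injectivity of the projection on $X_{k}$ plus the implication $(xy)^{2}\in Z \Rightarrow (xy)^{2}=1$, while you package the same facts as existence and uniqueness of involutory lifts; the two arguments are equivalent.
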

\begin{proof}
It suffices to show that $\Gamma(\SL_{n}(F), X_{k})\simeq \Gamma(\PSL_{n}(F), \bar{X}_{k})$ over a finite field $F$ with $q$ elements. Let $x, y\in X_{k}$ and  $Z(\SL_{n}(F))=\langle a \rangle$. Assume $\bar{x}=\bar{y}$ in $\bar{X}_{k}$, i.e., $x=a^{i}y$ for some $1\leq i\leq\gcd(n, q-1)$. Then, $a^{2i}=1$;  thus, by assumption we obtain $x=y$ in $X_{k}$ and $|X_{k}|=|\bar{X}_{k}|$. Moreover, if $\bar{x}\bar{y}=\bar{y}\bar{x}$, i.e., $(xy)^{2}=a^{i}I_{n}$ for some $1\leq i\leq\gcd(n, q-1)$, then $a^{2i}=1$; thus, $xy=yx$. Hence, they have the isomorphic graphs.
\end{proof}
Applying similar arguments, one can see that the diameter in Theorem \ref{mainthm} provides an upper bound of the diameter for certain involution classes of $\PSL_{n}(F)$ for even $n$ and some odd $q$.

Now we consider the set $X$ of all involutions in $\GL_{n}(F)$, i.e., $X=\bigcup X_{k}$. If $F$ is finite and $n\geq 4$, then a theorem of Brauer-Fowler \cite[Theorem (3D)]{BF} says $\diam\Gamma(\GL_{n}(F), X)\leq 3$ as mentioned before. For an arbitrary field $F$, a similar result \cite[Theorem 13]{AMRR} was obtained by Akbari, Mohammadian, Radjavi, and Raja: $\diam\Gamma(\GL_{n}(F), X)=3$ if $\Ch(F)\neq 2$ and $\diam\Gamma(\GL_{n}(F), X)\leq 4$ if $\Ch(F)=2$. By using the method developed in the proof of the main theorem, we prove 
\begin{corollary}
	Let $F$ be a field of characteristic $2$, $n\geq 3$, and $X$ the set of all involutions in $\GL_{n}(F)$. Then, $\diam\Gamma(\GL_{n}(F), X)=3$. 
\end{corollary}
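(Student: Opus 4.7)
The upper bound $\diam \Gamma(\GL_n(F), X) \leq 4$ is the Akbari--Mohammadian--Radjavi--Raja result already cited, and the matching lower bound $\geq 3$ follows from Theorem \ref{mainthm}(ii)--(iii). Thus the plan is to exclude the possibility of distance exactly $4$. Given involutions $x \in X_j$ and $y \in X_k$ (with $j \leq k$), the only cases not immediately handled by Theorem \ref{mainthm} applied to a single class are the cross-class case $j < k$ and the exceptional same-class configuration $n = 2k$ with $k$ odd (for which $d_{X_k}(x,y)$ can equal $4$).

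In both outstanding cases, I would exhibit a length-$3$ path $x - z_1 - z_2 - y$ through a pair of commuting transvections $z_1, z_2 \in X_1$. Writing $x = I + A$ and $y = I + B$ with $A^{2} = B^{2} = 0$, a direct computation in characteristic $2$ shows that a transvection $z = I + v\phi^T$ (with $v \neq 0, \phi \neq 0, \phi(v) = 0$) commutes with $x$ iff $v \in \ker A$ and $\phi \in \operatorname{ann}(\operatorname{im} A)$, and analogously for $y$, while two transvections $I + v\phi^T$ and $I + v'{\phi'}^T$ commute as soon as $\phi(v') = \phi'(v) = 0$. The required path therefore exists whenever one can produce nonzero $v \in \ker A$, $v' \in \ker B$, $\phi \in \operatorname{ann}(\operatorname{im} A)$, $\phi' \in \operatorname{ann}(\operatorname{im} B)$ satisfying the four scalar conditions $\phi(v) = \phi(v') = \phi'(v) = \phi'(v') = 0$.

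Each of these four ambient subspaces has dimension $n - j$ or $n - k$, both at least $n/2$, so a routine dimension count produces such a quadruple $(v, v', \phi, \phi')$ whenever $n \geq 4$; the remaining case $n = 3$ has a single involution class and is handled by Theorem \ref{mainthm} alone. The main obstacle is the tight exceptional regime $n = 2k$ with $k$ odd (which forces $k \geq 3$ since $n \geq 6$), where all four subspaces have dimension exactly $k$: here I would choose $v \in \ker A \cap \operatorname{im} B$ and $v' \in \ker B \cap \operatorname{im} A$ (both nonzero because $\dim \ker A + \dim \operatorname{im} B \geq n$), which collapses the annihilation conditions on $\phi, \phi'$ to a small system that still admits nonzero solutions, completing the length-$3$ path.
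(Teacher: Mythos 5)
Your upper-bound strategy is genuinely different from the paper's (which recycles the decompositions (\ref{xeventwo}) and (\ref{formxcor}) from Theorem \ref{even diam} to route every involution through $t'_1$ and then proves $d(x_1,t'_i)\leq 2$ by explicit block manipulations), and the transvection idea is attractive: your commutation criterion for $z=I+v\phi^{T}$ with $x=I+A$, $A^{2}=0$, is correct. But the execution has a concrete flaw exactly in the hardest case. When $x,y\in X_{k}$ with $n=2k$, all of $\ker A$, $\operatorname{im}A$, $\ker B$, $\operatorname{im}B$ have dimension $k$, so $\dim\ker A+\dim\operatorname{im}B=n$ only gives $\dim(\ker A\cap\operatorname{im}B)\geq 0$, not $\geq 1$; for $x=t$, $y=t^{T}$ (the very pair realizing distance $4$ inside $X_k$ by Lemma \ref{k odd n 2k even}) one has $\ker A=\operatorname{im}A=\langle e_{k+1},\dots,e_{2k}\rangle$ and $\ker B=\operatorname{im}B=\langle e_{1},\dots,e_{k}\rangle$, so both of your intersections vanish and your chosen $v,v'$ do not exist. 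The repair is to exploit $\operatorname{im}A\subseteq\ker A$ (from $A^{2}=0$): take $v\in\operatorname{im}A\setminus\{0\}$ and $v'\in\operatorname{im}B\setminus\{0\}$, so that $\phi(v)=0$ and $\phi'(v')=0$ hold automatically for $\phi\in\operatorname{ann}(\operatorname{im}A)$, $\phi'\in\operatorname{ann}(\operatorname{im}B)$, leaving only the single conditions $\phi(v')=0$ and $\phi'(v)=0$ on spaces of dimension $n-j,\,n-k\geq 2$. (This also rescues your ``routine dimension count,'' which as stated fails already for $n=4$, $k=2$, where $\operatorname{ann}(\operatorname{im}B)$ is $2$-dimensional and two generic linear conditions kill it.)

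The second gap is the lower bound. Theorem \ref{mainthm}(ii)--(iii) bounds distances in the single-class graph $\Gamma(G,X_{k})$, where all intermediate vertices must lie in $X_{k}$; in $\Gamma(G,X)$ a pair at distance $3$ in $X_{k}$ may well acquire a common neighbour in some other class $X_{j}$ (your own transvection construction shows how readily elements of $X_{1}$ serve as such neighbours). So $\diam\Gamma(G,X)\geq 3$ does not follow from Theorem \ref{mainthm} and needs its own argument. The paper supplies one: taking $x=\left(\begin{array}{c|c}I_{k}&B\\\hline &I_{k}\end{array}\right)$ (with a unipotent middle block inserted when $n$ is odd), where $B$ is invertible and no conjugate of $B$ commutes with any $J(k,r)$, it shows that any involution commuting with both $x$ and $t'_{k}=J(n,k)$ would have to be the identity, whence $d(x,t'_{k})\geq 3$ in $\Gamma(G,X)$. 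You would need to add something of this kind to complete the proof.
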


Consequently, a theorem of Brauer-Fowler also holds for $\GL_{n}(F)$ over an arbitrary field $F$. We remark that there have been a number of recent works on the diameter of the commuting graphs of the general linear groups and matrix algebras over a field. Although we are dealing with a different set of vertices, it would be interesting to compare our result with the results in \cite{AMRR, DKKO, Shitov}.

As another application of our main theorem, we determine the structure of four dimensional linear group over a finite field (Proposition \ref{fourx1x3} and Proposition \ref{fourx2}). More precisely, for a fixed involution $t$ and a class $X_{k}$ in $\GL_{4}(F)$, we classify forms of involution $x\in X_{k}$ according to the distance $d(t,x)$ and provide size of $\Delta_{i}(t):=\{x\in X_{k}\mid d(t, x)=i\}$ for each $i$. We remark that for a three dimensional linear group, a similar result was previously obtained in \cite[Theorem 1.2]{BBPR}.

The present paper is organized as follows. In Section \ref{GL}, we first determine the distance from a fixed involution to an involution in triangular form (Proposition \ref{lowertriangular}). Using this result, we then present the proof of our main theorem in Theorem \ref{odd diam} and Theorem \ref{even diam} depending on characteristic of the ground field. At the end of this section, we determine the diameter of the graph on the set of all involutions over a field of characteristic $2$ (Corollary \ref{allinvolution}). In Section \ref{fdsec}, we present the detailed structure of involutions in $\GL_{4}(F)$ over a finite field (Proposition \ref{fourx1x3} and Proposition \ref{fourx2}). In the proof we apply our method developed in the previous section.

In this paper, we denote by $e_{ij}$ the matrix with a one in position $i, j$ and zeros elsewhere. For $a\in F$, we write $E_{ij}(a)$ for $I_{n}+ae_{ij}$. We shall use $\mat(m,n)$ to denote the set of all $m\times n$ matrices over a field $F$. For any elements $x, y$ in a group, we simply wirte $x\sim y$ if $xy=yx$.

\section{Commuting involutions in general linear groups}\label{GL}

\subsection{The diameter of triangular forms} For convenience, we shall write $t$ for $I(n, k)$. Then, by a direct computation we see that $\Delta_{1}(t)$ consists of partitioned matrices 
\begin{equation}\label{deltaone}
\left(\begin{array}{c|c}
A\\
\hline  & C
\end{array}\right)\in X_{k} \,\,\big(\text{resp. }\left(\begin{array}{c|c}
A'\\
\hline B & C
\end{array}\right)\in X_{k}\big) \,\,\text{ if } \Ch(F)\neq 2 \,\,(\text{resp. otherwise}),
\end{equation}
such that $A'=\left(\begin{array}{c|c}
P & Q\\
\hline  & C
\end{array}\right)$, $A^{2}=I_{n-k}$, $C^{2}=I_{k}$, $P^{2}=I_{n-2k}$, and $A'B=BC$.

Let $V$ be an $n$-dimensional vector space over $F$ with the natural action of $G=\GL_{n}(F)$. For an involution $x\in X_{k}$ we denote by $[V, t]$ the image of $x-1$, i.e., $[V,x]=\{xv-v\mid v\in V\}$, thus $\dim_{F}[V, x]=k$. Consider the conjugation action of $G$ on $[V, x]$ given by $g\cdot[V, x]=[V, gxg^{-1}]$. Note that if $\{v_{1},\ldots,v_{k}\}$ is a basis of $[V,t]$, then $\{gv_{1},\ldots,gv_{k}\}$ is a basis of $g\cdot[V, t]$. It follows from this that the stabilizer of $[V, t]$ is given by
\begin{equation}\label{stabilizeroft}
C_{G}([V,t])=\{\left(\begin{array}{c|c}
A\\
\hline B & C
\end{array}\right)\,|\,A\in \GL_{n-k}(F),B\in \mat(n-k,k),C\in \GL_{k}(F)\};
\end{equation}
see \cite[proof of Theorem 4.6]{BBPR}.

Let $W=[V, x]$ for an involution $x\in X_{k}$ and $m=\dim(W\cap [V,t])$. Then, we have $\max\{2k-n,0\}\leq m\leq k$. Moreover, it is easy to see that the stabilizer in (\ref{stabilizeroft}) acts transitively on the collection of such $k$-dimensional subspaces $W$ (see also \cite[proof of Theorem 4.6]{BBPR}). 

Let $J(n, k)=\left(\begin{array}{c|c}
	I_{n-k}\\
	\hline \begin{array}{c|c}
		I_{k}\end{array} & I_{k}
\end{array}\right)$ for any $1\leq k\leq n/2$. We shall frequently use the following involution in  $X_{k}$: 
\begin{equation}
t_{m}=\begin{cases}
\left(\begin{array}{c|c}
I(n-k,k-m)\\
\hline  & -I(k,k-m)
\end{array}\right)\text{ if }\Ch(F)\neq2\\
\left(\begin{array}{c|c|c}
I_{n-2k} & \\
\hline  & J(k,k-m)\\
\hline  & \begin{array}{c|c|c}
& &\\
\hline  & I_{2m-k} &\\
\hline  & &\\
\end{array} & J(k,k-m)
\end{array}\right)\text{ if }\Ch(F)=2\text{ and } 2m\geq k
\end{cases}\label{tm}
\end{equation}
such that $t=w_{m}t_{m}w_{m}$  and $t\sim t_m$ with $m=\dim([V, t_{m}]\cap [V,t])$, where
\begin{equation}
w_{m}=\left(\begin{array}{c|c|c|c}
I_{n-2k+m} &  & \\
\hline  &  & & I_{k-m}\\
\hline  &  &I_{m} \\
\hline  & I_{k-m} &  & 
\end{array}\right)
\Bigg(\text{resp. }\left(\begin{array}{c|c|c|c}
I_{n-2k+m} &  & \\
\hline  &  & I_{k-m}\\
\hline  & I_{k-m} & \\
\hline  &  &  & I_{m}
\end{array}\right) \Bigg)
\end{equation}
in the corresponding cases. Similarly, we define $t_m$ to be the transpose of  the second involution $t_{k-m}$ in (\ref{tm}) if $\Ch(F)=2$ and $2m<k$. In this case, we have $t^{T}=w_{k-m}t_{m}w_{k-m}$ and $t^{T}\sim t_{m}$.

\begin{lemma}\label{vxvteq}$(cf.\, $\cite[Lemma 4.1]{BBPR}$)$
Let $t\neq x\in X_{k}$ with $[V, t]=[V, x]$. Then, $x$ is of the form $x=\left(\begin{array}{c|c}
I_{n-k}\\
\hline  A & -I_{k}
\end{array}\right)$ for some $A\in \mat(k, n-k)$ if $\Ch(F)\neq 2$ and $x=\left(\begin{array}{c|c}
	I_{n-k}\\
	\hline  B & I_{k}
\end{array}\right)$ for some $B\in \mat(k, n-k)$ with $\rank(B)=k$ otherwise. In particular, $d(t, x)=1$ in the latter case.
\end{lemma}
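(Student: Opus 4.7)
The plan is to work in block coordinates adapted to $[V,t]$ and extract the shape of $x$ from the three constraints $[V,x]=[V,t]$, $x^{2}=I$, and $x\in X_{k}$, applied in that order. A direct computation gives $[V,t]=$ the span of the last $k$ standard basis vectors $e_{n-k+1},\ldots,e_{n}$, for both forms of $t=I(n,k)$. Writing $x$ in the matching partition
\[
x=\left(\begin{array}{c|c} P & Q \\ \hline R & S \end{array}\right),
\]
the hypothesis $[V,x]=[V,t]$ forces the top $(n-k)\times n$ strip of $x-I$ to vanish, giving $P=I_{n-k}$ and $Q=0$. The involution relation $x^{2}=I$ then reduces to $S^{2}=I_{k}$ together with $(I_{k}+S)R=0$, read off from the $(2,1)$-block.

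For $\Ch(F)\neq 2$, the class $X_{k}$ prescribes eigenvalues $1$ of multiplicity $n-k$ and $-1$ of multiplicity $k$. Since $x$ is block-lower-triangular with an $I_{n-k}$ block, all the $-1$ eigenvalues must come from $S$; combined with $S^{2}=I_{k}$ (hence $S$ diagonalizable), this forces $S=-I_{k}$. Then $(I_{k}+S)R=0$ is vacuous, so $R=A$ is unrestricted and $x$ has the claimed shape.

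The characteristic $2$ case is the main obstacle, since $S^{2}=I_{k}$ there only gives $(S+I_{k})^{2}=0$ and does not by itself pin down $S$. My idea is to convert this nilpotency together with $(S+I_{k})R=0$ into a rank bound: every column of $R$ lies in $\ker(S+I_{k})$ by the commuting identity, and every column of $S+I_{k}$ lies in $\ker(S+I_{k})$ by nilpotency, so
\[
\rank(R\mid S+I_{k})\leq k-\rank(S+I_{k}).
\]
On the other hand $\dim[V,x]=k$, and the block shape of $x-I$ yields $\rank(x-I)=\rank(R\mid S+I_{k})=k$. Comparing the two estimates forces $\rank(S+I_{k})=0$, hence $S=I_{k}$, and then $\rank(R)=k$; setting $B=R$ gives the claimed form.

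For the final assertion, once $x=\left(\begin{array}{c|c} I_{n-k} & 0 \\ \hline B & I_{k} \end{array}\right)$ has been established in characteristic $2$, direct block multiplication shows that $tx$ and $xt$ both equal $\left(\begin{array}{c|c} I_{n-k} & 0 \\ \hline B+T & I_{k} \end{array}\right)$, where $T$ is the lower-left block of $t=I(n,k)$; hence $tx=xt$ and $d(t,x)=1$ because $x\neq t$.
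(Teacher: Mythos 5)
Your proof is correct, but it takes a genuinely different route from the paper's. The paper writes $x=gtg^{-1}$, observes that $[V,x]=[V,t]$ forces $g$ to lie in the stabilizer $C_{G}([V,t])$, which by (\ref{stabilizeroft}) is block lower triangular, and then reads off the form of $x$ from a one-line conjugation computation (in characteristic $2$ the lower-left block becomes $CTA^{-1}$ with $T$ the rank-$k$ lower-left block of $t$, whence $\rank(B)=k$). You instead never touch the conjugating element: you extract the shape of $x$ intrinsically from the three conditions in sequence --- the image condition kills the top $(n-k)\times n$ strip of $x-I$, the involution condition yields $S^{2}=I_{k}$ and $(I_{k}+S)R=0$, and membership in $X_{k}$ pins down $S$ via the characteristic polynomial (so $S=-I_{k}$ when $\Ch(F)\neq 2$, using semisimplicity of $S$) or via the rank inequality $\rank(R\mid S+I_{k})\leq k-\rank(S+I_{k})$ against $\rank(x-I)=k$ (so $S=I_{k}$ and $\rank(R)=k$ when $\Ch(F)=2$). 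Your characteristic-$2$ argument is a nice self-contained way to see simultaneously why $S=I_{k}$ and why the rank condition on $B$ holds, which the paper leaves implicit in ``the result immediately follows''; the paper's version is shorter and reuses the stabilizer description it needs elsewhere anyway. Your direct verification of $d(t,x)=1$ by computing $tx=xt$ is also fine, where the paper simply cites the description of $\Delta_{1}(t)$ in (\ref{deltaone}).
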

\begin{proof}
Let $x=gtg^{-1}$ for some $g\in \GL_{n}(F)$. Then, by assumption we obtain $g\cdot [V, t]=[V, t]$, thus $g\in C_{G}([V,t])$. As $g$ is a block lower triangular as in (\ref{stabilizeroft}), the result immediately follows. The second statement follows from (\ref{deltaone}).\end{proof}

We first determine the distance from $t$ to an involution which is a block lower triangular.

\begin{proposition}\label{lowertriangular}
Let $x=\left(\begin{array}{c|c}
	A\\
	\hline B & C
	\end{array}\right)\in X_{k}$ with $A\in \GL_{n-k}(F),$ $B\in \mat(k, n-k)$, $C\in \GL_{k}(F)$. Then, $d(t,x)\leq 3$ if $\Ch(F)\neq 2$ and $n=2k$ with $k$ odd and $d(t,x)\leq 2$ otherwise.
\end{proposition}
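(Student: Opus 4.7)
The plan is to produce an involution $y\in X_{k}\setminus\{t,x\}$ commuting with both $t$ and $x$, which yields $d(t,x)\le 2$; when a parity obstruction blocks this, I will instead build a chain of length three.

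Assume first $\Ch(F)\neq 2$. Since conjugation by an element of $C_{G}(t)$ preserves distances from $t$, I may first normalize $A=\diag(I_{p},-I_{q})$ and $C=\diag(I_{r},-I_{s})$ with $p+q=n-k$ and $r+s=k$. The relation $CB+BA=0$ then forces $B=\left(\begin{smallmatrix}0 & B_{12}\\ B_{21} & 0\end{smallmatrix}\right)$, and the hypothesis $x\in X_{k}$ forces $r=q$ and $s=k-q$. A further conjugation by block-diagonal elements of $C_{G}(t)$ puts $B_{12}\in\mat(q,q)$ and $B_{21}\in\mat(k-q,p)$ in rank canonical form, which splits each of the refined pieces $V_{1},V_{2},V_{3},V_{4}$ into a ``rank'' summand and a ``kernel/cokernel'' summand.

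I then seek $y=\diag(A_{y},C_{y})$ with $A_{y},C_{y}$ block-diagonal compatible with this refined decomposition, so $ty=yt$ is automatic. Commutativity $xy=yx$ reduces to the intertwining relations $N_{1}B_{12}=B_{12}M_{2}$ and $N_{2}B_{21}=B_{21}M_{1}$: on each rank summand a pair of involutions must be conjugate via the relevant block of $B$ (so their $(-1)$-eigenspaces have equal dimension), while on each kernel/cokernel summand the involutions are unconstrained. Denoting the resulting contributions to $\dim[V,y]$ by $2\alpha,2\beta$ from the rank summands and $\gamma_{1},\ldots,\gamma_{4}$ from the free summands, the condition $y\in X_{k}$ becomes $2\alpha+2\beta+\gamma_{1}+\gamma_{2}+\gamma_{3}+\gamma_{4}=k$. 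A direct count shows this is solvable whenever some $\gamma_{i}$ has room to be nonzero; all $\gamma_{i}$ collapse to $0$ precisely when both $B_{12}$ and $B_{21}$ are invertible square matrices, which forces $n=2k$. In that case the equation reduces to $2(\alpha+\beta)=k$, solvable iff $k$ is even, yielding $d(t,x)\le 2$ outside the stated exceptional case.

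For $n=2k$ with $k$ odd (and $\Ch(F)\neq 2$) I will construct a three-step chain $t,z,y,x$: choose $z\in\Delta_{1}(t)$ of the form $\diag(A_{z},C_{z})$ whose $(\pm1)$-eigenspace decomposition is misaligned with that of $(A,C)$, so that in the $z$-adapted analysis the corresponding $B$-blocks of $x$ acquire a nontrivial kernel or cokernel. Rerunning the two-step argument for the pair $(z,x)$ then produces an intermediate $y$ with $d(z,x)\le 2$, and combined with $d(t,z)=1$ this gives $d(t,x)\le 3$. In characteristic $2$ the same strategy applies and, owing to the fact that involutions have Jordan blocks of size two rather than scalar $\pm1$-eigenspaces, the parity obstruction never materializes, so the desired $y$ always exists and $d(t,x)\le 2$. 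The hardest part of the argument will be the exceptional characteristic-$\neq 2$ situation: exhibiting a suitable $z$ and verifying that the two-step construction still applies once $x$ is no longer block-lower triangular relative to $z$.
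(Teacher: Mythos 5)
Your analysis of the generic characteristic-$\neq 2$ case is essentially correct and is in substance the paper's own argument: the paper likewise reduces the off-diagonal block to rank-normal form by elementary matrices and then produces a diagonal involution (a permutation $t'$ of the entries of $t$, playing the role of your $y$) whose $\pm1$ pattern intertwines with that normal form, and it isolates exactly the same parity obstruction ($n=2k$, both blocks invertible, $k$ odd). But the two remaining cases are genuine gaps, not finishing touches. In the exceptional case your plan is to pick a block-diagonal $z\in\Delta_{1}(t)$ ``misaligned'' with $(A,C)$ and to rerun the two-step argument for the pair $(z,x)$. That argument, as you built it, needs the second involution to be block lower triangular relative to a flag adapted to the first; once $t$ is replaced by $z$ (equivalently, once $z$ is conjugated to standard position) $x$ loses that shape, and you have no tool for analyzing $C(z)\cap C(x)$ — you concede as much. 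This is the heart of the proposition. The paper instead attacks from the $x$ end: it conjugates $x$ to $z'=\left(\begin{smallmatrix}-I_{k} & \\ I_{k} & I_{k}\end{smallmatrix}\right)$, exhibits an explicit involution $x'$ with diagonal blocks $I(k,\lceil k/2\rceil)$, $I(k,[k/2])$ and singular lower-left block $U=(I(k,[k/2])-I(k,\lceil k/2\rceil))/2$ that commutes with $z'$, and applies the already-established two-step case to the pair $(t,x')$. Note that in the resulting path the distance-$2$ vertex is adjacent to $x$, not to $t$; some such reversal of viewpoint, or an explicit construction, is needed to close your gap.

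The characteristic-$2$ claim that ``the same strategy applies'' also cannot stand as written: involutions are unipotent there, so there is no $\pm1$-eigenspace decomposition, $\Delta_{1}(t)$ is not the set of block-diagonal involutions (see (\ref{deltaone})), and the centralizer of $J(m,r)$ is not block-diagonal. The paper's proof in this case uses a different normal form ($A\sim J(n-k,k_{1})$, $C\sim J(k,k_{2})$), derives the rank inequality $k_{1}\geq k_{2}$ from $x\in X_{k}$, and — crucially — invokes Lemma \ref{vxvteq}: in characteristic $2$ any two involutions of $X_{k}$ with the same commutator space $[V,\cdot]$ are adjacent, so the common neighbour of $t$ and $x$ can be taken to be a conjugate $hgt(hg)^{-1}$ of $t$ itself. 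Without an analogue of that lemma, your construction of a common neighbour in characteristic $2$ does not get off the ground.
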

\begin{proof}
First consider the case that $\Ch(F)=2$. Let $y= \left(\begin{array}{c|c}
J(n-k, k_{1}) & \\
\hline D & J(k, k_{2})
\end{array}\right)\in X_{k}$ for some $k_{1}, k_{2}\geq 0$ and $D\in \mat(k, n-k)$. We first show that $k_{1}\geq k_{2}$. Since $DJ(n-k, k_{1})=J(k, k_{2})D$, we see that $D=\left(\begin{array}{c|c}
D_{1} & \begin{array}{c|c} & \\ \hline D_{2}& \end{array}\\
\hline
D_{3} & D_{4}
\end{array}\right)$ for some $D_{1}\in \mat(k-k_{2}, k_{1})$, $D_{2}\in \mat(k-2k_{2}, n-k-2k_{1})$, $D_{3}\in \mat(k_{2}, k_{1})$, $D_{4}\in \mat(k_{2}, n-k-k_{1})$. As $y\in X_{k}$, we have $k_{1}\geq k_{2}$, thus we can find a block diagonal matrix $g\in C_{G}([V,t])$ such that $$y=g\left(\begin{array}{c|c}
\begin{array}{c|c} J(n-2k, k_{1}-k_{2}) & \\\hline & J(k, k_{2}) \end{array}  & \\ \hline
D' & J(k, k_{2})
\end{array}\right)g^{-1}=:gzg^{-1}$$ for some $D'\in \mat(k, n-k)$. Let $x=\left(\begin{array}{c|c}
A\\
\hline B & C
\end{array}\right)\in X_{k}$ for some $A\in \GL_{n-k}(F)$, , $C\in \GL_{k}(F)$. As $A^{2}=I_{n-k}$ and $C^{2}=I_{k}$, there exists a block diagonal matrix $h\in C_{G}([V,t])$ such that $x=hyh^{-1}=hgz(hg)^{-1}$. Since $z\in \Delta_{1}(t)$, it follows from Lemma \ref{vxvteq} that $d(t, x)\leq d(t, hgt(hg)^{-1})+d(hgt(hg)^{-1}, hgz(hg)^{-1})\leq 1+1=2$.

Consider the case that $\Ch(F)\neq 2$. Let $y'=\left(\begin{array}{c|c}
-I(n-k,n+m-2k)\\
\hline E & I(k,m)
\end{array}\right)
\in X_{k}$ for some $0\leq m\leq k$ and $E\neq 0$. Since $(y')^{2}=I_{n}$,
we see that the matrix $E$ is of the form $E=\left(\begin{array}{c|c}
E_{1}\\
\hline  & E_{2}
\end{array}\right)$ for some $E_{1}\in M(k-m,k-m)$ and $E_{2}\in M(m,n+m-2k)$. Let $P_{i}$, $Q_{i}$, $1\leq i\leq2$, be products of elementary
matrices such that 
\begin{equation}\label{PEQ}
P_{i}E_{i}Q_{i}=\left(\begin{array}{c|c}
I_{l_{i}} &\\
\hline & \\
\end{array}\right),\text{where }\,l_{i}=\rank(E_{i})
\end{equation}
and let $g'=\left(\begin{array}{c|c|c|c}
Q_{1} &  & \\
\hline  & Q_{2} & \\
\hline  &  & P_{1}^{-1}\\
\hline  &  &  & P_{2}^{-1}
\end{array}\right)$.  Consider a diagonal matrix $t'(\neq t)$ obtained by permuting
the diagonal entries of $t$, written as $t'=\left(\begin{array}{c|c|c|c}
R_{1} &  & \\
\hline  & R_{2} & \\
\hline  &  & S_{1}\\
\hline  &  &  & S_{2}
\end{array}\right)$ with $R_{1},S_{1}\in M(k-m,k-m)$, $S_{2}\in M(m,m),R_{2}\in M(n+m-2k,n+m-2k)$ such that   
\begin{equation}\label{SPEQ}
S_{i}(P_{i}E_{i}Q_{i})=(P_{i}E_{i}Q_{i})R_{i}.
\end{equation}
This is guaranteed by (\ref{PEQ}). Note that $n=2k$ with  $k=l_{1}+l_{2}$ odd if and only if there is no $t'\in X_{k}$. To see the forward direction, assume $t'\in X_{k}$. Then, the assumption and (\ref{PEQ}) implies $R_{i}=S_{i}$ for all $ i $, which is impossible. Similarly, one can easily check the reverse direction.

If $t'\in X_{k}$, then by (\ref{SPEQ}) we see that $y'$ commutes with $g't'(g')^{-1}\in \Delta_{1}(t)$. As $x=h'y'(h')^{-1}$ for some block diagonal matrix $h'\in C_{G}(t)$, we conclude that $x$ commutes with $(h'g')t'(h'g')^{-1}\in \Delta_{1}(t)$, i.e., $d(t, x)=2$. 

Otherwise, we see that our matrix 
\[
y'=\left(\begin{array}{c|c|c|c}
-I_{k_{1}}&  &  &  \\ \hline 
& I_{k_{2}} &  &  \\\hline 
E_{1}&  & I_{k_{1}} &  \\ \hline
& E_{2} &  & -I_{k_{2}} 
\end{array}  \right),
\]
where $k_{1}=k-m, k_{2}=m$, is a composition of 2 submatrices 
\[
y'_{1}=\left( \begin{array}{c|c}
-I_{k_{1}}&  \\ \hline
E_{1}&I_{k_{1}} 
\end{array} \right),\, y'_{2}=
\left( \begin{array}{c|c}
I_{k_{2}}&  \\ \hline
E_{2}&-I_{k_{2}} 
\end{array} \right).
\]
Therefore, it is enough to check that $d(t_{i}, y'_{i})\leq 3$ for each $1\leq i\leq 2$, where $t_{i}=I(2k_{i},k_{i})$. We shall consider only the case $i=1$ because the other case is similar. Thus, we may assume that $k=k_{1}$ is odd, $y'=y_{1}'$, and $t=t_{1}$. Write $y'=u\left(\begin{array}{c|c}
-I_{k}\\
\hline I_{k} & I_{k}
\end{array}\right)u^{-1}=:uz'u^{-1}$ for some block diagonal matrix $u\in C_{G}(t)$. As $z'$ commutes with $x':=\left(\begin{array}{c|c}
I(k, \lceil k/2\rceil)\\
\hline  U & I(k, [k/2])
\end{array}\right)$, where $U=(I(k, [k/2])-I(k, \lceil k/2\rceil))/2$, $x$ commutes with $(h'u)x'(h'u)^{-1}$. Since $\rank(U)\neq k$, it follows from the previous result that $d(t, (h'u)x'(h'u)^{-1})=2$, thus $d(t, x)\leq 3$. Indeed, $d(t, x)=3$ as $z'$ does not commute with any element in $\Delta_{1}(t)$. \end{proof}

\begin{remark}\label{uppertwo} 
By the same argument as in the proof of Proposition \ref{lowertriangular}, we see that for any $x\in X_{k}$ of the form $\left(\begin{array}{c|c}
	A & B\\
	\hline  & C
	\end{array}\right),$ $A\in \GL_{n-k}(F),$ $C\in \GL_{k}(F)$, $B\neq0$, $d(t, x)\leq3$ in case $\Ch(F)\neq 2$. More precisely, $d(t, x)\leq 3$ if $n=2k$ with $k$ odd and $d(t, x)\leq 2$ otherwise.
\end{remark}

\subsection{The diameter in characteristic different from $2$} In \cite[Theorem 10]{AMRR}, it was shown that for any $k=[n/2]$ $\diam \Gamma(G,X_{k})\geq 3$. We first present a general lower bound for the diameter in case $\Ch(F)\neq 2$. As $\Gamma(G, X_{k})\simeq \Gamma(G, X_{n-k})$ by the map $x\mapsto -x$, we shall only consider the case where $n\geq 2k$. 

\begin{proposition}\label{odd diam lower}
Let $F$ be a field of characteristic different from $2$ and $n\geq 3$. Then, for any integer $k$ with $n<4k<3n$, $\diam \Gamma(G,X_{k})\geq 3$.
\end{proposition}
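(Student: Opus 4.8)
The plan is to produce a single explicit pair $t,x\in X_k$ with $d(t,x)\ge 3$. First I would use the isomorphism $x\mapsto -x$, which identifies $\Gamma(G,X_k)$ with $\Gamma(G,X_{n-k})$, to reduce to the case $n\ge 2k$; under this normalization the hypothesis $n<4k$ becomes $n/4<k\le n/2$. Fix $t=I(n,k)$ with eigenspaces $V_+$ of dimension $n-k$ and $V_-=[V,t]$ of dimension $k$, and write every candidate involution in block form with respect to $V_+\oplus V_-$. The goal is to exhibit one $x=\left(\begin{smallmatrix}P&Q\\R&S\end{smallmatrix}\right)\in X_k$ with $Q,R$ not both zero (so that $d(t,x)\ge 2$ by \eqref{deltaone}) for which no common neighbour exists.

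Next I would argue by contradiction: suppose $d(t,x)\le 2$, so there is $z\in X_k$ commuting with both $t$ and $x$. Commuting with $t$ forces $z=\diag(A,C)$ with $A^2=I_{n-k}$, $C^2=I_k$ by \eqref{deltaone} and \eqref{stabilizeroft} (here $\Ch(F)\ne 2$ is used), while $z\in X_k$ records the rank constraint $\rank(A-I)+\rank(C-I)=k$. Writing out $zx=xz$ yields the intertwining relations $AP=PA$, $CS=SC$, $AQ=QC$ and $RA=CR$. The idea is to choose $x$ so these relations are incompatible with the rank constraint. Concretely I would take $Q\in\mat(n-k,k)$ injective and $R\in\mat(k,n-k)$ surjective (possible since $\dim V_+=n-k\ge k$), with $\operatorname{im}Q$ and $\ker R$ in general position, and then force $P,S$ by $x^2=I$. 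Then $AQ=QC$ makes $A$ preserve $\operatorname{im}Q$ and act there as a conjugate of $C$, so $\rank\bigl((A-I)|_{\operatorname{im}Q}\bigr)=\rank(C-I)$; dually $RA=CR$ makes $A$ preserve $\ker R$. In general position $\operatorname{im}Q\oplus\ker R=V_+$, so $A$ splits as $A_1\oplus A_2$ and the rank budget collapses to $2\rank(C-I)+\rank(A_2-I)=k$ with $\rank(A_2-I)\le\dim\ker R=n-2k$.

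The heart of the argument is then to rule out this equation together with the remaining relations $AP=PA,\ CS=SC$ (which tie $A_1,A_2,C$ to the fixed $P,S$) whenever $n<4k$; the point being that the odd/parity obstruction familiar from the $n=2k$, $k$ odd case (the witness $\left(\begin{smallmatrix}-I_k&\\ I_k&I_k\end{smallmatrix}\right)$ in Proposition~\ref{lowertriangular}, which commutes with no element of $\Delta_1(t)$) must be propagated across the whole band $2k\le n<4k$. The hard part will be exactly this uniformity: choosing one family $x$ whose induced block structure on $V_+$ leaves too little room for $A_2$ to absorb the discrepancy $k-2\rank(C-I)$ within the available dimension $n-2k$, and in particular handling the delicate tight end $n$ close to $4k$ and the even-$k$ subcase, where the naive rank inequality is not strict and one must use the precise similarity type of $A$ forced by $AP=PA$ rather than ranks alone. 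I expect the bookkeeping of these compatibility conditions, and verifying that the constructed $x$ genuinely lies in $X_k$ over an arbitrary field, to be the main technical obstacle; the lower bound $d(t,x)\ge 3$ then follows since $t\not\sim x$ and no admissible $z\in\Delta_1(t)$ commutes with $x$.
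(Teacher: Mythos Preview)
Your rank-counting never closes. With $Q$ injective, $R$ surjective, and $\operatorname{im}Q\oplus\ker R=V_+$, you correctly reach $2\rank(C-I)+\rank(A_2-I)=k$ with $0\le\rank(A_2-I)\le n-2k$; but this merely says $(3k-n)/2\le\rank(C-I)\le k/2$, an interval that is nonempty precisely when $n\ge 2k$, i.e.\ always in your normalization. So dimension arithmetic alone gives no contradiction, and you are thrown back onto the relations $AP=PA$, $CS=SC$ --- yet you never specify $P,S$, and for a generic choice of $Q,R$ the matrices $P,S$ forced by $x^2=I$ do not constrain the similarity type of $C$ at all (for instance, if $P,S$ are scalar there is no constraint). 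The appeal to the $n=2k$, $k$ odd parity obstruction is also misleading: that obstruction evaporates as soon as $n>2k$, because the extra factor $A_2$ absorbs any parity discrepancy. In short, the proposal identifies the right unknown (the block structure of a hypothetical $z\in\Delta_1(t)$) but supplies no mechanism to rule it out; what you label ``the hard part'' is in fact the entire content of the proposition.

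The paper's argument uses a different idea: rigidity rather than rank. One fixes a single matrix $M\in\GL_k(F)$ (a nilpotent Jordan block shifted by $-2I_k$) whose centralizer contains no involution other than $\pm I_k$, and builds $x$ by conjugating a simple standard involution in $X_k$ by a unipotent matrix carrying $M$ in one off-diagonal block. Unwinding $xz=zx$ for $z\in\Delta_1(t)$ then forces one $k\times k$ diagonal block of $z$ to commute with $M$, hence to equal $\pm I_k$; either sign makes $\rank(z-I)\ne k$. Two explicit constructions (one for $2k<n<3k$, a slightly more elaborate one for $3k\le n<4k$) cover the whole range. The missing ingredient in your sketch is exactly this rigid block: without a component of $x$ whose centralizer in $\GL_k(F)$ contains no nontrivial involution, nothing prevents a common neighbour $z$ from existing.
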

\begin{proof}
It suffices to consider the case $2k<n<4k$. We shall find $x\in X_{k}$ such that it does not commute with any involution in $\Delta_{1}(t)$. Let $A=\left( \begin{array}{c|c}
&\\\hline I_{k-1}&
\end{array}\right)-2I_{k}\in \GL_{k}(F)$. Then, obviously, the only involution commuting with $A$ is $-I_{k}$. We divide the proof into the following two cases.

Assume $2k<n<3k$. Consider the following involution in $X_{k}$
\[x=g\left(\begin{array}{c|c|c}
I_{n-2k} & \\
\hline  & -I_{k} & I_{k}\\
\hline  &  & I_{k}
\end{array}\right)g^{-1}\, \text{ with } g=\left(\begin{array}{c|c|c}
I_{n-2k} & \\
\hline  & I_{k}\\
\hline  & A & I_{k}
\end{array}\right).\]
If $d(t,x)=2$, then by (\ref{deltaone}) there exists
\[y=g\left(\begin{array}{c|c|c}
B & \\
\hline C & D\\
\hline 2C &  & D
\end{array}\right)g^{-1}\in \Delta_{1}(t)
\]
such that $B^{2}=I_{n-2k}$, $D^{2}=I_{k}$, and $ AD=DA$. Hence, by assumption we have $D=I_{k}$, which contradicts $y\in X_{k}$. Therefore, $d(t,x)\geq 3$.

Now we assume that $3k\leq n<4k$. Consider an involution in $X_{k}$
\[
x=g\left(\begin{array}{c|c|c}
I_{n-2k} & \\
\hline \begin{array}{c|c} &I_{k}\end{array}  & -I_{k} & I_{k}\\
\hline  &  & I_{k}
\end{array}\right)g^{-1} \, \text{ with } g=\left(\begin{array}{c|c|c}
I_{n-2k} & \\
\hline \begin{array}{c|c} &-I_{k}\end{array} & I_{k}\\
\hline \begin{array}{c|c} &\,\,\,\;I_{k}\end{array} & A & I_{k}
\end{array}\right)
\]
Similarly, if $d(t, x)=2$, then by (\ref{deltaone}) there exists
\[y=g\left(\begin{array}{c|c|c}
B & \\
\hline C & I_{k}\\
\hline D &  & I_{k}
\end{array}\right)g^{-1}\in \Delta_{1}(t)\, \text{ with } B^{2}=I_{n-2k}\]
such that $CB+C=DB+D=0$,
\begin{equation}\label{prop2.4 eqn}
 C=\frac{1}{2}\left(\begin{array}{c|c}
B_{3}+D_{1} & B_{4}+D_{2}-I_{k}\end{array}\right), \text{ and } \left(\begin{array}{c|c} &I_{k}\end{array}\right)(B-I_{n-2k})+A\big(C+\left(\begin{array}{c|c} &I_{k}\end{array}\right)\big)+D=0,
\end{equation}
where $B=\left(\begin{array}{c|c}
B_{1}&B_{2}\\\hline B_{3}&B_{4}
\end{array} \right)$ and $D=\left(\begin{array}{c|c}D_{1} & D_{2}\end{array}\right)$ with $B_{4}, D_{2}\in \mat(k,k)$.

Write $B=PI(n-2k,k)P^{-1}$ for some $P=\left(\begin{array}{c|c}
P_{1} & P_{2}\\
\hline P_{3} & P_{4}
\end{array}\right)\in GL_{n-2k}(F)$ with $P_{4}\in\mat(k,k)$. Then, it follows by (\ref{prop2.4 eqn}) that $P_{3}=0$ and
\[\left(\begin{array}{c|c}
\, & -2I_{k}\end{array}\right)=\left(\begin{array}{c|c}
\frac{1}{2}A(B_{3}+D_{1})-D_{1} & -\frac{1}{2}AD_{2}-D_{2}\end{array}\right).\]
Therefore, we obtain $4I_{k}=(A+2I_{k})D_{2}$, which is impossible as $A+2I_{k}$ is singular. Hence, $d(t,x)\geq 3$.\end{proof}

Applying Propositions \ref{lowertriangular} and \ref{odd diam lower}, we first determine the diameter in case $\Ch(F)\neq 2$.

\begin{theorem}\label{odd diam}
Let $F$ be a field of characteristic different from $2$ and $n\geq 3$. Then, for any integer $k$ with $n<4k<3n$, $\diam \Gamma(G,X_{k})=3$.
\end{theorem}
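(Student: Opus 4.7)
The lower bound $\diam \Gamma(G,X_{k}) \geq 3$ is exactly Proposition \ref{odd diam lower}. The plan is thus to establish $d(t,x) \leq 3$ for every $x \in X_{k}$, by reducing to Proposition \ref{lowertriangular}, which already bounds the distance from $t$ to a block lower triangular involution.

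Given $x \in X_{k}$, I seek an involution $y \in X_{k}$ which commutes with $x$ and is block lower triangular with respect to the splitting $V = V_{+} \oplus V_{-}$ induced by $t$ (equivalently, $y$ stabilizes $[V,t]$). For such $y$, Proposition \ref{lowertriangular} gives $d(t,y) \leq 2$ outside the exceptional configuration ($n=2k$ with $k$ odd, plus a full-rank condition on the off-diagonal block of $y$), so $d(t,x) \leq d(t,y) + d(y,x) \leq 2 + 1 = 3$.

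To construct $y$, I would exploit the $\langle t,x \rangle$-module structure of $V$. Decompose $V$ into common eigenspaces of $t$ and $x$ together with low-dimensional $\langle t,x \rangle$-indecomposable pieces. Any involution commuting with both $t$ and $x$ must act as $\pm I$ on each two-dimensional indecomposable piece and as an arbitrary involution on each common eigenspace. Requiring $y \in X_{k}$ translates to a combinatorial condition on the dimensions of these pieces, which is solvable in all cases except the degenerate configuration $n=2k$, $k$ odd, with $V$ entirely a sum of two-dimensional pieces and no common eigenspaces. In the solvable regime we may in fact arrange $y$ to be block diagonal, yielding the sharper bound $d(t,x) \leq 2$.

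The main obstacle is the degenerate configuration: here no involution in $X_{k}$ commutes simultaneously with $t$ and $x$, so the above strategy fails. In this case, I would adapt the explicit three-step construction from the bad sub-case of Proposition \ref{lowertriangular}, routing the path through the specific involution $z' = \left(\begin{array}{c|c} -I_{k} & \\ \hline I_{k} & I_{k} \end{array}\right)$ (or an appropriate conjugate) together with an auxiliary block lower triangular involution $x'$ whose off-diagonal block fails to have full rank. Proposition \ref{lowertriangular} then gives $d(t,x') \leq 2$ in the non-bad regime, and since $x$ commutes with $x'$ by construction, we conclude $d(t,x) \leq 3$.
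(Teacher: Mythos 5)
The lower bound and the overall skeleton (produce a block lower triangular involution commuting with $x$, then invoke Proposition \ref{lowertriangular}) match the paper, but your construction of that involution does not work. You claim that outside the configuration $n=2k$ with $k$ odd one can always find $y\in X_k$ commuting with \emph{both} $t$ and $x$, hence $d(t,x)\leq 2$. This is refuted by Proposition \ref{odd diam lower} itself, which you cite for the lower bound: for every $k$ with $2k<n<4k$ it exhibits $x\in X_k$ that commutes with no element of $\Delta_1(t)$, i.e., with no involution of $X_k$ that also commutes with $t$. The structural premise behind your combinatorial claim is false: in characteristic different from $2$ the indecomposable $\langle t,x\rangle$-summands of $V$ are not confined to dimension at most $2$ once $tx$ fails to be semisimple. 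Concretely, in the lower-bound example for $2k<n<3k$ the matrix $A+2I_k$ is a single nilpotent block, so the only involutions commuting with $A$ are $\pm I_k$; any candidate $y\in\Delta_1(t)$ commuting with $x$ is then forced to have $-1$-eigenspace of dimension either at least $2k$ or at most $n-2k$, and neither equals $k$. What the paper actually does is weaker and correct: it normal-forms $x=hyh^{-1}$ with $h$ block lower triangular, finds $z\in\Delta_1(t)$ commuting with the middle matrix $y$, and observes that $hzh^{-1}$ commutes with $x$ and stabilizes $[V,t]$ \emph{without} commuting with $t$ --- which is all Proposition \ref{lowertriangular} requires. Your argument must likewise produce only a stabilizer of $[V,t]$, not a centralizer of $t$; as written it is also internally inconsistent, since it would give diameter $2$ in regimes where you have already accepted that the diameter is at least $3$.

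A second, independent gap: the case $n=2k$ with $k$ odd is not a short adaptation of the bad sub-case of Proposition \ref{lowertriangular}, which only treats block triangular $x$. For a general $x$ in this configuration the paper runs an extensive case analysis over normal forms of the conjugating data ($m=0$ with $L$ invertible or singular, $D$ the identity or singular, $k>3$ versus $k=3$ with $m=1,2$, and so on), in each case exhibiting an explicit involution commuting with $x$ whose $(2,1)$-block is singular so that Proposition \ref{lowertriangular} applies. Your sketch gives no mechanism for reducing an arbitrary $x$ in this configuration to the single matrix $z'$ you name.
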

\begin{proof}
It is enough to prove that the lower bounds in Proposition \ref{odd diam lower} are tight. We first assume that $ n>2k $ or $ n=2k $ with $ k $ even. Let $x\in X_{k}$ with $\dim([V,t]\cap[V,x])=m$.
By transitivity, there is a block lower triangular
matrix $g$ such that $[V,x]=[V,gt_{m}g^{-1}]=[V, gw_{m}tw_{m}g^{-1}]$.
Therefore, we have
\[
w_{m}g^{-1}xgw_{m}=\left(\begin{array}{c|c}
I_{n-k}\\
\hline B & -I_{k}
\end{array}\right)
\]
for some $B\in M(k,n-k)$. Let $B=\left(\begin{array}{c|c}
B_{1} & B_{2}\\ \hline
B_{3} & B_{4}
\end{array}\right)$ with $B_{1}\in \mat(m,n-2k+m)$, $B_{4}\in \mat(k-m,k-m)$.
Then, the product $g^{-1}xg$ is decomposed as 
\[
h^{\prime}\left(\begin{array}{c|c|c|c}
I_{n-2k+m} &  & \\
\hline  & -I_{k-m} & &B_{4}\\
\hline  &  & -I_{m}\\
\hline  &  &  & I_{k-m}
\end{array}\right)h^{\prime-1}, \text{ where } h^{\prime}=\left(\begin{array}{c|c|c|c}
I_{n-2k+m} &  & \\
\hline B_{3}/2 & I_{k-m} & \\
\hline B_{1}/2 &  & I_{m}&B_{2}/2\\
\hline  &  &  & I_{k-m}
\end{array}\right).
\]
Let $y$ denote the middle matrix in the above decomposition and $h=gh'$. Then, $x=hyh^{-1}$. By Remark \ref{uppertwo}, there
exists $z\in\Delta_{1}(t)$ such that $yz=zy$. Hence, $hzh^{-1}$
commutes with $x$. As $hzh^{-1}$ is block lower triangular, it follows
from Proposition \ref{lowertriangular} that $d(hzh^{-1},t)\leq2$. Therefore,
$d(t,x)\leq3.$

Now we consider the case where $n=2k$ and $k$ odd. Note that a conjugation on $x$ by a block diagonal matrix in $C_{G}(t)$ does not affect on the distance from $t$, i.e.,  all $C_{G}(t)$-conjugacy class of $x$ has the same distance from $t$. Hence, after a suitable modification, we may assume that 
\begin{equation}\label{modx}
x=\left(\begin{array}{c|c}
I_{k}\\
\hline L & I_{k}
\end{array}\right)\left(\begin{array}{c|c|c|c}
I_{m} &  & \\
\hline  & -I_{k-m} &  & D\\
\hline  &  & -I_{m}\\
\hline  &  &  & I_{k-m}
\end{array}\right)\left(\begin{array}{c|c}
I_{k}\\
\hline -L & I_{k}
\end{array}\right),\, \text{ where } D=\left(\begin{array}{c|c}
I_{l}\\
\hline  & 0
\end{array}\right)
\end{equation}
for some $0\leq l\leq k-m$ and $L\in M(k, k)$. We shall continue to denote by $h$ and $y$ the first and the second matrices in (\ref{modx}), respectively. If $l=0$, then $x$ is block lower triangular, thus by Proposition \ref{lowertriangular}, we have $d(t,x)\leq 3$.

Assume that $m=0$ and both $L$ and $D$ are singular ($0<l<k$). Observe that for any $M=\left(\begin{array}{c|c}
M_{1}\\
\hline  & M_{2}
\end{array}\right)$ and $N=\left(\begin{array}{c|c}
M_{1}\\
\hline N_{2} & N_{3}
\end{array}\right)$ with $M_{1}\in \GL_{l}(F)$, $M_{2},N_{3}\in \GL_{k-l}(F)$, the matrix $x=hyh^{-1}$ in (\ref{modx}) is $C_{G}(t)$-conjugate to $\left(\begin{array}{c|c}
I_{k}\\
\hline L' & I_{k}
\end{array}\right)y\left(\begin{array}{c|c}
I_{k}\\
\hline -L' & I_{k}
\end{array}\right)$, where $L'=N^{-1}LM$, thus we may replace $L$ by $L'$. Write $L=\left(\begin{array}{c|c}
L_{1} & L_{2}\\
\hline L_{3} & L_{4}
\end{array}\right)$, $L_{1}\in \mat(l,l)$, $L_{4}\in \mat(k-l,k-l)$ and $L'=(L_{i}')$ in the same form. If $L_{2}\neq 0$, then we choose $M_{1}$ and $M_{2}$ such that the last column of $L_{2}'=M_{1}^{-1}L_{2}M_{2}$ is $(0,\ldots,0, 1)^{T}$. Let $v$ be the last column of $L_{4}M_{2}$. Set $N_{2}=\left(\begin{array}{c|c}
\, & v
\end{array}\right)$. Then, all entries of the last column of $L_{4}'=-N_{2}M_{1}^{-1}M_{2}+L_{4}M_{2}$ are zero except the $l$-th element. Choose a diagonal matrix $t'$ obtained by permuting the diagonal entries of $t$ whose $l$-th, $k$-th, and $(l+k)$-th entries are all $-1$. Then, we see that $y\sim t'$ and the $(2,1)$-block of $ht'h^{-1}$ is singular. Hence by Proposition \ref{lowertriangular}, we obtain $d(t,x)=d(t, ht'h^{-1})+d(ht'h^{-1},x)\leq 3$. Similarly, if $L_{2}=0$, then we can find $M$ and $N$ as above such that
$L'=\left(\begin{array}{c|c}
J_{1} & \\
\hline L_{3}' & L_{4}'\end{array}\right)$,
where $J_{1}$ is a rational canonical from of $L_{1}$ and $L_{4}'=\left(\begin{array}{c|c}&\\ \hline  &I_{l'}\end{array}\right)$ for some $0\leq l'\leq k-l$. If $J_{1}$ is singular, then the $(2,1)$-block of $ht'h^{-1}$ is singular, thus we may assume that $J_{1}$ is nonsingular. Moreover, by applying appropriate $M$ and $N$, we may assume that $L_{3}'=0$, thus $L'$ has a zero row, which implies that the $(2,1)$-block of $ht'h^{-1}$ is singular and $d(t,x)\leq 3$. We divide the remaining proof into the following three cases.

\framebox{{\it Case}: $m=0$ and $L\in \GL_{k}(F)$ or $m=0$, $D=I_{k}$, and $L\not\in \GL_{k}(F)$.}
We start with the first case. Write $L=QP^{-1}$ for some $P,Q\in \GL_{k}(F)$. Then, $x$ is $C_{G}(t)$-conjugate to 
\begin{equation}\label{mzerox}
\left(\begin{array}{c|c}
I_{k}\\
\hline I_{k} & I_{k}
\end{array}\right)\left(\begin{array}{c|c}
-I_{k} & J\\
\hline  & I_{k}
\end{array}\right)\left(\begin{array}{c|c}
I_{k}\\
\hline -I_{k} & I_{k}
\end{array}\right),\, \text{ where } J=\left(\begin{array}{c|c}
J_{1}\\
\hline  & J_{2}
\end{array}\right) 
\end{equation}
is a rational canonical form of $P^{-1}DQ$ with a singular part $J_{1}\in \mat(k_{1},k_{1})$ and a nonsingular part $J_{2}\in \GL_{k_{2}}(F)$.

Let $A, B\in \GL_{k_{2}}(F)$ such that $AJ_{2}B^{-1}=I_{k_{2}}$. Then, the matrix in (\ref{mzerox}) is $C_{G}(t)$-conjugate to a matrix 
\begin{equation}\label{mzeroxtwo}
\left(\begin{array}{c|c|c|c}
I_{k_{1}} &  & \\
\hline  & I_{k_{2}} & \\
\hline I_{k_{1}} &  & I_{k_{1}}\\
\hline  & J' &  & I_{k_{2}}
\end{array}\right)\left(\begin{array}{c|c|c|c}
-I_{k_{1}} &  & J_{1}\\
\hline  & -I_{k_{2}} &  & I_{k_{2}}\\
\hline  &  & I_{k_{1}}\\
\hline  &  &  & I_{k_{2}}
\end{array}\right)\left(\begin{array}{c|c|c|c}
I_{k_{1}} &  & \\
\hline  & I_{k_{2}} & \\
\hline -I_{k_{1}} &  & I_{k_{1}}\\
\hline  & -J' &  & I_{k_{2}}
\end{array}\right)
\end{equation}
where $J'$ is a rational canonical form of $BA^{-1}$. As in the proof of Proposition \ref{lowertriangular}, the matrix in (\ref{mzeroxtwo}) is a composition of following two submatrices
\begin{equation}
\label{xonextwo}
x_{1}=\left(\begin{array}{c|c}
I_{k_{1}}\\
\hline I_{k_{1}} & I_{k_{1}}
\end{array}\right)\left(\begin{array}{c|c}
-I_{k_{1}} & J_{1}\\
\hline  & I_{k_{1}}
\end{array}\right)\left(\begin{array}{c|c}
I_{k_{1}}\\
\hline -I_{k_{1}} & I_{k_{1}}
\end{array}\right),\, x_{2}=\left(\begin{array}{c|c}
I_{k_{2}}\\
\hline J' & I_{k_{2}}
\end{array}\right)\left(\begin{array}{c|c}
-I_{k_{2}} & I_{k_{2}}\\
\hline  & I_{k_{2}}
\end{array}\right)\left(\begin{array}{c|c}
I_{k_{2}}\\
\hline -J' & I_{k_{2}}
\end{array}\right),
\end{equation}
it is enough to show that $d(t_{i},x_{i})\leq 3$ for all $1\leq i\leq 2$, where $t_{i}=I(2k_{i}, k_{i})$. For the same reason, we may assume that $J_{1}$ is a singular companion matrix and $J'$ is one of the followings: a diagonal matrix, a nonsingular companion
matrix or a direct sum of an element in $F^{\times}$, and a nonsingular companion matrix. If $k_{i}$ is even for some $i$, then it follows by the previous case that $d(t_{i},x_{i})\leq 3$. Therefore, it suffices to consider the distance for each odd $k_{i}$.

Let $k_{i}=2r_{i}+1$ for $1\leq i\leq 2$. Consider the following involution in $X_{k_{1}}$
\[
y_{1}=\left(\begin{array}{c|c}
I_{k_{1}}\\
\hline I_{k_{1}} & I_{k_{1}}
\end{array}\right)\left(\begin{array}{c|c}
Y & Z\\
\hline  & W
\end{array}\right)\left(\begin{array}{c|c}
I_{k_{1}}\\
\hline -I_{k_{1}} & I_{k_{1}}
\end{array}\right),\, \text{ where } Z=\frac{J_{1}W-YJ_{1}}{2},
\]
\[
Y=\left(\begin{array}{c|c|c}
I(r_{1},r_{1}-1) & \\
\hline  & -1\\
\hline  & u & I_{r_{1}}
\end{array}\right), W=\left(\begin{array}{c|c|c}
I_{r_{1}} & \\
\hline v & -1\\
\hline  &  & -I_{r_{1}}
\end{array}\right),\, u=\left(\begin{array}{c}
-1\\
0\\
\vdots\\
0
\end{array}\right),\, v=\left(\begin{array}{cccc}
0 & \cdots & 0 & -1\end{array}\right).
\]
Using the fact that the first row of $J_{1}$ is zero, one can easily check that
\[
x_{1}\sim y_{1}\sim \left(\begin{array}{c|c}
I(k_{1},r_{1})\\
\hline  & I(k_{1},r_{1}+1)
\end{array}\right)\in\Delta_{1}(t_{1}),
\]
thus $d(t_{1},x_{1})\leq 3$ for odd $k_{1}$.

Now we show that $d(t_{2},x_{2})\leq 3$ for odd $k_{2}$. Assume that $J'$ is a diagonal matrix. Then, we have 
\begin{equation}\label{pathforx2}
\left(\begin{array}{c|c}
-I_{k_{2}} & I_{k_{2}}\\
\hline  & I_{k_{2}}
\end{array}\right)\sim\left(\begin{array}{c|c}
I(k_{2},r_{2}) & -e_{(r_{2}+1)(r_{2}+1)}\\
\hline  & I(k_{2},r_{2}+1)
\end{array}\right)\sim\left(\begin{array}{c|c}
Y\\
\hline WJ'-J'Y & W
\end{array}\right), 
\end{equation}
where
\[
Y=I(k_{2},r_{2}),\, W=\left(\begin{array}{c|c}
-1\\\hline
& I(2r_{2},r_{2})
\end{array}\right).
\] 
Hence, after the conjugation by the first matrix in the decomposition of $x_{2}$ we obtain $d(t_{2},x_{2})\leq 3$. Similarly, if $J'$ is either a nonsingular companion
matrix or a direct sum of an element in $F^{\times}$ and a nonsingular companion matrix, then the same commutativity (\ref{pathforx2}) holds if we replace $Y$ and $W$ by
\begin{equation}
\label{YandWanother}
Y=\left(\begin{array}{c|c|c}
I(r_{2},r_{2}-1) & \\
\hline  & -1 & v\\
\hline  &  & I_{r_{2}}
\end{array}\right),\, W=\left(\begin{array}{c|c|c}
I_{r_{2}} & \\
\hline  & -1\\
\hline  &  & -I_{r_{2}}
\end{array}\right)
\end{equation}
where $v=\left(\begin{array}{ccc}
v_{1} & \cdots & v_{r_{2}}\end{array}\right)$ with  $v_{i}=-(J')_{r_{2}+1,r_{2}+1+i}$ for all $1\leq i\leq r_{2}$. Hence, $d(t_{2},x_{2})\leq 3$. For the second case, we may replace the matrix $L$ in (\ref{modx}) by its rational canonical form. Then, the same proof above works with $Y$ and $W$ in (\ref{pathforx2}) and (\ref{YandWanother}), replacing $k_{2}$ by $k$.

\framebox{{\it Case}: $k>3\text{ and } m>0$.} Write
$L=\left(\begin{array}{c|c}
L_{1} & L_{2}\\
\hline L_{3} & L_{4}
\end{array}\right)$ with $L_{1}\in \mat(m,m), L_{4}\in \mat(k-m,k-m)$. Find $M, N\in \GL_{m}(F)$ and a lower triangular matrix $P\in \GL_{k-m}(F)$ such that $M^{-1}L_{1}N$ is a diagonal matrix and the last column of $P^{-1}L_{3}N$ has at most one nonzero element in the $i_{0}$-th row. Then, we see that $x$ is $C_{G}(t)$-conjugate to
\begin{equation}\label{mod2x}
\left(\begin{array}{c|c}
I_{k}\\
\hline L' & I_{k}
\end{array}\right)\left(\begin{array}{c|c|c|c}
I_{m} &  & \\
\hline  & -I_{k-m} &  & D'\\
\hline  &  & -I_{m}\\
\hline  &  &  & I_{k-m}
\end{array}\right)\left(\begin{array}{c|c}
I_{k}\\
\hline -L' & I_{k}
\end{array}\right)
,
\end{equation}
where $P'\in \GL_{l}(F)$ is the (1,1)-block of $P$,
\[
L'=\left(\begin{array}{c|c}
M\\
\hline  & P
\end{array}\right)^{-1}L\left(\begin{array}{c|c}
N\\
\hline  & I_{k-m}
\end{array}\right)
\,\text{ and } D'=DP=\left(\begin{array}{c|c}
P'\\
\hline  & \,
\end{array}\right).\]
Hence, we may replace $x$ by the matrix in (\ref{mod2x}). We shall denote by $h'$ and $y'$ the first and the second matrices in (\ref{mod2x}), respectively.

We choose diagonal matrices $Y_{i}\in \GL_{m}(F), W_{i}\in \GL_{k-m}(F)$ for $1\leq i\leq 2$ such that the last diagonal entries of $Y_{i}$ and the $i_{0}$-th diagonal entry of $W_{2}$ are all $-1$, $DW_{2}=W_{1}D$, and 
\[
z:=\left(\begin{array}{c|c|c|c}
Y_{1} &  & \\
\hline  & W_{1}' & \\
\hline  &  & Y_{2}\\
\hline  &  &  & W_{2}
\end{array}\right)\in X_{k}\,\text{ with } W_{1}'=\left(\begin{array}{c|c}
P'\\
\hline  & I_{k-m-l}
\end{array}\right)W_{1}\left(\begin{array}{c|c}
P'\\
\hline  & I_{k-m-l}
\end{array}\right)^{-1}
\]
Then, we have $z\sim y'$ and the $m$-th column of (2,1)-block of $h'zh'^{-1}$ is zero, thus by Proposition \ref{lowertriangular} $d(t,h'zh'^{-1})\leq 2$, so $d(t,x)\leq3$.

\framebox{{\it Case}: $k=3 \text{ and }m>0$}
Assume that $m=2$. Then, by the same argument as in the previous case, we may assume that
\[
x=\left(\begin{array}{c|c}
I_{3}\\
\hline L & I_{3}
\end{array}\right)\left(\begin{array}{c|c|c|c}
I_{2} &  & \\
\hline  & -1 &  & 1\\
\hline  &  & -I_{2}\\
\hline  &  &  & 1
\end{array}\right)\left(\begin{array}{c|c}
I_{k}\\
\hline -L & I_{k}
\end{array}\right)=:hyh^{-1}, \text{ with } L=\left(\begin{array}{cc|c}
* & * & *\\
& * & *\\
\hline * &  & *
\end{array}\right)\text{ or }\left(\begin{array}{cc|c}
* &  & *\\
*& * & *\\
\hline  & * & *
\end{array}\right).
\]
Then, $y$ commutes with the diagonal matrix $z$ with entries $(-1,1,-1,1,1,-1)$ and $(1,-1,-1,1,1,-1)$. As the last row of (2,1)-block of $hzh^{-1}$ is zero, it follows from Proposition \ref{lowertriangular} that $d(t,x)\leq3$.

Now we assume that $m=1$. First of all, if either the first row or the second row in the $(2,2)$-block of $L$ (i.e., $L_{4}\in \mat(2,2)$) is zero, then the middle matrix $y$ in (\ref{modx}) commutes with the diagonal matrices $z_{1}=\diag(-1,-1,1,1,-1,1)$ or $z_{2}=\diag(-1,1,-1,1,1,-1)$. Moreover, the $(2,1)$-block of  $hz_{i}h^{-1}$ is singular for each $1\leq i\leq 2$. Therefore, by Proposition \ref{lowertriangular} we conclude that $d(t,x)\leq3$. Hence, after a suitable modification, we may assume that $L_{4}$ is invertible.

If $D$ is invertible (i.e., $D=I_{2}$) then, we have  
\[
x=\left(\begin{array}{c|c}
I_{3}\\
\hline L & I_{3}
\end{array}\right)\left(\begin{array}{c|c|c|c}
1 &  & \\
\hline  & -I_{2} &  & I_{2}\\
\hline  &  & -1\\
\hline  &  &  & I_{2}
\end{array}\right)\left(\begin{array}{c|c}
I_{3}\\
\hline -L & I_{3}
\end{array}\right)=:hyh^{-1}, \text{ where } L=\left(\begin{array}{c|cc}
i & a & b\\
\hline c &  & d\\
e & 1 & f
\end{array}\right)
\]
with $i=0$ or $1$ and $d\neq 0, a, b, c, e, f\in F$ (if $L_{4}$ is diagonalizable, then by the same argument as in the case $m=2$ we obtain $d(t,x)\leq 3$). One can easily verify that
\[
y\sim \left(\begin{array}{c|c}
Y &\\ \hline
Z& W
\end{array}\right)=:z,
\]
where
\begin{align}Y=\left(\begin{array}{c|c|c}1 &  & \\ \hline
		& 1 & \\ \hline
		q & p & -1\end{array}\right),\, W=\left(\begin{array}{c|c|c}-1 & r & \\ \hline
		& 1 & \\ \hline
		& p & -1\end{array}\right), Z=\left(\begin{array}{c|c|c} & -2r & \\ \hline
		&  & \\ \hline
		2q&  & \end{array}\right)\end{align}

\[\text{ with } (p, q, r)=\begin{cases} (2e/c, 0, 0) & \text{ if } i=0, c\neq 0,\\
(-2a/b, 0, 0) & \text{ if } i=0, c=0, b\neq0,\\
(2e/c, 0, 2/c) & \text{ if } i=1, c\neq 0,\\
(-2a/b, -2/b, 0) & \text{ if } i=1, c=0, b\neq 0,\\
((2a-4)/d, 2/d, 2) & \text{ if } i=1, c=b=0.
\end{cases}\]
If $ i=c=b=0, $ then take $ z=\diag(-1,1,-1,1,1,-1) $. As the $(2,1)$-block of $hzh^{-1}$ is singular, it follows from Proposition \ref{lowertriangular} that $d(t,x)\leq3$.

Finally, if $D$ is singular (i.e., $D=e_{11}$) and $L_{4}$ is nonsingular, then $x$ is $C_{G}(t)$-conjugate to  
\begin{equation}\label{modxkthree}
x'=\left(\begin{array}{c|c}
I_{3}\\
\hline L' & I_{3}
\end{array}\right)\left(\begin{array}{c|c|c|c}
1 &  & \\
\hline  & -I_{2} &  & D'\\
\hline  &  & -1\\
\hline  &  &  & I_{2}
\end{array}\right)\left(\begin{array}{c|c}
I_{3}\\
\hline -L' & I_{3}
\end{array}\right),\, \text{ where }\, L'=\left(\begin{array}{c|cc}
i & a & b\\
\hline c & 1\\
d &  & 1
\end{array}\right),D'=\left(\begin{array}{cc}
\\
1 & e
\end{array}\right)
\end{equation}
with $0\leq i\leq 1$ and $a, b, c, d, e\in F$. We denote by $h'$ and $y'$ the first and the second matrices in (\ref{modxkthree}), respectively. Consider an involution $z'$ in $X_{3}$
\begin{equation}\label{diagonalinvol}
z'=\begin{cases} \diag(1,1,-1,1,-1,-1) & \text{ if } d=0,\\
	\diag(1,-1,-1,1,-1,1) & \text{ if } c=e=0,\\
	\diag(-1,-1,1,1,-1,1)+(2/e)e_{65} & \text{ if } c=0, e\neq 0.\end{cases} 
\end{equation}
Then, we see that $y'\sim z'$ and the $(2,1)$-block of $h'z'h'^{-1}$ is singular, thus $d(t,x)\leq 3$ in corresponding cases. If $c, d\in F^{\times}$, then we replace $x'$ in  (\ref{modxkthree}) by the following $C_{G}(t)$-conjugate involution $x''$
\[
\left(\begin{array}{c|c}
I_{3}\\
\hline L'' & I_{3}
\end{array}\right)\left(\begin{array}{c|c|c|c}
1 &  & \\
\hline  & -I_{2} &  & D''\\
\hline  &  & -1\\
\hline  &  &  & I_{2}
\end{array}\right)\left(\begin{array}{c|c}
I_{3}\\
\hline -L'' & I_{3}
\end{array}\right),\text{ where } L''=\left(\begin{array}{c|cc}
i & a & b\\
\hline c & 1\\
& -d/c & 1
\end{array}\right),D''=\left(\begin{array}{c|c}
& \\ \hline
1+de/c & e
\end{array}\right).
\]
Applying the same argument together with $z'=\diag(1,1,-1,1,-1,-1)$, we conclude that $d(t,x)\leq 3$.\end{proof}

In the following corollary, we further reduce the upper bound for $\diam \Gamma(G,X_{k})$ over a field $F$ of characteristic different $2$. The same result in case $\Ch(F)=2$ was obtained in \cite[Lemma 4.3]{BBPR}.

\begin{corollary}\label{odd diam2}Let $F$ be a field of characteristic different $2$ and $n\geq 3$. Then, $\diam \Gamma(G,X_{k})=2$ if $n\geq 4k$ or $ n\geq 4(n-k)$.
\end{corollary}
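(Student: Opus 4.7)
The plan is to establish $\diam\Gamma(G,X_k)\le 2$; the matching lower bound $\ge 2$ is automatic since $X_k$ contains non-commuting pairs whenever $k\ge 1$. By the graph isomorphism $\Gamma(G,X_k)\simeq\Gamma(G,X_{n-k})$ induced by $x\mapsto -x$, it suffices to treat the case $n\ge 4k$.

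The strategy is, for every $x\in X_k$, to produce an involution $z\in X_k$ commuting with both $t$ and $x$, so that $d(t,x)\le 2$. A natural candidate is suggested by the ``common fixed subspace'' $W:=V_+(t)\cap V_+(x)$, whose dimension is at least $n-2k\ge 2k\ge k$. Taking a $k$-dimensional subspace $K\subseteq W$ as $V_-(z)$ automatically forces $t$ and $x$ to preserve $V_-(z)$; the remaining task is to produce a $\langle t,x\rangle$-invariant complement of dimension $n-k$ to serve as $V_+(z)$.

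Concretely, I would carry out the construction within the block-matrix framework of Theorem \ref{odd diam}. Write $x=hyh^{-1}$ as in the proof of that theorem, with $h$ a composition of the block lower triangular conjugator $g\in C_G([V,t])$, the permutation $w_m$, and a further block lower triangular correction, and with $y$ in standard block lower triangular form. Proposition \ref{lowertriangular} furnishes $z_0\in\Delta_1(t)$ commuting with $y$; the assumption $n\ge 4k$ excludes the exceptional $n=2k$, odd-$k$ obstruction. Then $hz_0h^{-1}$ commutes with $x$, and the crucial improvement over the $d(t,x)\le 3$ bound of Theorem \ref{odd diam} is to select $z_0$ so that $hz_0h^{-1}$ itself is block diagonal, hence lies in $\Delta_1(t)\cup\{t\}$. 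Writing $h=\left(\begin{array}{cc}A&0\\B&C\end{array}\right)$ and $z_0=\diag(Z_1,Z_2)$, this reduces to the intertwining $BZ_1=CZ_2C^{-1}B$. The hypothesis $n\ge 4k$ enlarges the $(n-k)\times(n-k)$ upper block enough that $Z_1$ has ample room: one aligns its $\pm 1$-eigenspaces with the column structure of $B$, equivalently choosing the permutation $t'$ used in the proof of Proposition \ref{lowertriangular} so that conjugation by $h$ preserves block diagonality.

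The main obstacle is the bookkeeping inherited from Theorem \ref{odd diam}: three regimes ($m=0$; $k>3$ with $m>0$; $k=3$ with $m\in\{1,2\}$) must be handled separately, each with a different standard form for $y$ and its own candidate $z_0$. For each regime one must verify that $z_0$ can be chosen within the family of $\Delta_1(t)$-elements commuting with $y$ so that the lower-triangular correction induced by $h$ vanishes under conjugation. The $k=3$, $m=1$ case, with its further singular/nonsingular subcases of $L_4$ and $D$, is the most delicate; but $n\ge 12$ there leaves ample slack in the upper $V_+(t)$-block to carry the construction through in every subcase.
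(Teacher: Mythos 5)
Your strategy coincides with the paper's: reduce to $n\ge 4k$ via $x\mapsto -x$, take the decomposition $x=hyh^{-1}$ from the proof of Theorem \ref{odd diam}, and upgrade the distance-$3$ path by choosing the involution $z_0\in\Delta_{1}(t)$ commuting with $y$ so that $hz_0h^{-1}$ remains block diagonal; your intertwining condition $BZ_1=CZ_2C^{-1}B$ is the correct formulation of that last requirement. Two things, however, do not survive scrutiny. First, the ``three regimes'' you propose to inherit from Theorem \ref{odd diam} ($m=0$; $k>3$ with $m>0$; $k=3$ with $m>0$) occur only in the branch $n=2k$ with $k$ odd, which is excluded here since $n\ge 4k>2k$. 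For $n>2k$ that proof uses one uniform decomposition, with $y$ equal to $\diag(I_{n-2k+m},-I_{k-m},-I_{m},I_{k-m})$ plus a single off-diagonal block $B_4$; the bookkeeping you describe as the main obstacle simply does not arise, and the argument is uniform in $m$.

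Second, the actual construction of $z_0$ --- the only step where the hypothesis $n\ge 4k$ is genuinely used --- is missing. Taking $Z_2=I_k$, your condition reduces to $BZ_1=B$, i.e.\ the $(-1)$-eigenspace of $Z_1$, which must be $k$-dimensional for $z_0\in X_k$, must lie in $\ker B$; but $z_0$ must simultaneously commute with $y$, so this eigenspace cannot meet the $k-m$ coordinates on which $y$ acts by $-I_{k-m}$ and is coupled to the last block by $B_4$. It must therefore sit inside the first $n-2k+m$ coordinates, where $y$ is the identity. Factoring $h=uv$ with $u$ block diagonal in $C_G(t)$ and $v$ unipotent with $(2,1)$-block $(Q_1\,|\,Q_2)$, $Q_1\in\mat(k,n-2k+m)$, one needs a $k$-dimensional subspace of $\ker Q_1$ inside $F^{n-2k+m}$, and this exists precisely because $\dim\ker Q_1\ge n-3k+m\ge n-3k\ge k$. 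That is the paper's choice $z=\diag(-PI(n-2k+m,n-3k+m)P^{-1},I_{k-m},I_k)$ with $P$ chosen so that the first $k$ columns of $Q_1P$ vanish; ``aligning eigenspaces with the column structure of $B$'' alone does not guarantee $z_0y=yz_0$. (Your opening sketch --- placing the $(-1)$-eigenspace of $z$ inside the common fixed space of $t$ and $x$ and finding an invariant complement --- could in fact be completed independently by decomposing $V$ into $\langle t,x\rangle$-indecomposables, but as written it too stops before the invariant complement is produced.)
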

\begin{proof}
Let $x=hyh^{-1}$ as in the proof of Theorem \ref{odd diam}. Write $h=u\left(\begin{array}{c|c}
I_{n-k}\\
\hline \begin{array}{c|c}
Q_{1}& Q_{2}\end{array} & I_{k}
\end{array}\right)=:uv$ for some block diagonal matrix $u\in C_{G}(t)$, $Q_{1}\in M(k, n-2k+m)$, and $Q_{2}\in M(k, k-m)$. Hence, $x=uvy(uv)^{-1}$.

We find $z\in \Delta_{1}(t)$ such that $yz=zy$ and $vzv^{-1}\in \Delta_{1}(t)$. By assumption that $n-3k+m\geq k$, we can find a matrix $P\in \GL_{n-2k+m}(F)$
such that the first $k$ columns of $Q_{1}P$ are all zero. Take 
\[z=\left(\begin{array}{c|c|c}
	C &  & \\
	\hline  & I_{k-m} & \\
	\hline  &  & I_{k}
\end{array}\right)
\]
where $C=-PI(n-2k+m, n-3k+m)P^{-1}$. Obviously, $z$ commutes with $y$. Moreover, by a direct computation we obtain $Q_{1}C=Q_{1}$, thus $vzv^{-1}\in \Delta_{1}(t)$. Therefore, $uvz(uv)^{-1}\in \Delta_{1}(t)$ and $d(t, x)=d(t, uvz(uv)^{-1})+d(uvy(uv)^{-1}, x)=2$.\end{proof}

\subsection{The diameter in characteristic $2$}

We first provide an involution which has diameter at least $4$.

\begin{lemma}\label{k odd n 2k even}
Let $F$ be a field of characteristic $2$ and $n\geq 3$. Then, $\diam \Gamma(G,X_{k})\geq 4$ if $n=2k$ with $k$ odd.
\end{lemma}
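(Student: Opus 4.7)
My plan is to exhibit two involutions at distance at least $4$. Take $t = I(n,k) = J(2k,k)$ and $x = t^T = \left(\begin{array}{c|c} I_k & I_k \\ \hline 0 & I_k \end{array}\right)$. Conjugation by the block-swap $s = \left(\begin{array}{c|c} 0 & I_k \\ \hline I_k & 0 \end{array}\right)$ carries $t$ to $t^T$, so $x \in X_k$, and $[V,t] \cap [V,x] = 0$, the extremal case $m=0$ of the parameter preceding Lemma \ref{vxvteq}.

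The first step is to specialize (\ref{deltaone}) to $n=2k$: every $y \in C_G(t) \cap X_k$ has the block-lower-triangular form $y = \left(\begin{array}{c|c} A & 0 \\ \hline C & A \end{array}\right)$ with $A^2 = I_k$, $AC = CA$, and $\rank(y - I_n) = k$, and dually every $y' \in C_G(t^T) \cap X_k$ has the form $y' = \left(\begin{array}{c|c} A' & B \\ \hline 0 & A' \end{array}\right)$. The case $d(t,x) \leq 2$ is then a one-line parity argument: a common $y \in C_G(t) \cap C_G(t^T) \cap X_k$ is block-diagonal with equal diagonal blocks $A$, so $\rank(y - I_n) = 2\rank(A - I_k)$ is even, contradicting $k$ being odd. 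This already isolates the essential role of the parity of $k$.

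The main step is to rule out $d(t,x) = 3$. Assume a path $t \sim y_1 \sim y_2 \sim x$ with $y_1 = \left(\begin{array}{c|c} A_1 & 0 \\ \hline C_1 & A_1 \end{array}\right)$ and $y_2 = \left(\begin{array}{c|c} A_2 & B_2 \\ \hline 0 & A_2 \end{array}\right)$. Expanding $y_1 y_2 = y_2 y_1$ in blocks produces the relations $[A_1, B_2] = [A_2, C_1] = [B_2, C_1] = 0$ together with the single nontrivial identity $[A_1, A_2] = B_2 C_1 = C_1 B_2$. Writing $N_i = A_i - I_k$, so that $N_i^2 = 0$, I first clear the degenerate cases: if $A_1 = I_k$, then $\rank(y_1 - I_n) = k$ forces $C_1 \in \GL_k(F)$, hence $B_2 = 0$ from the commutator identity, and then $\rank(y_2 - I_n) = 2\rank N_2$ is even, contradicting $k$ odd; the case $A_2 = I_k$ is symmetric. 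In the remaining case with $N_1, N_2 \neq 0$, the plan is to combine the block-rank formula $\rank(y_1 - I_n) = \rank N_1 + \dim(\mathrm{im}\, N_1 + C_1 \ker N_1)$, and its analogue for $y_2$, with the observation that $\mathrm{im}(C_1 B_2) = \mathrm{im}[N_1, N_2]$ is invariant under all four operators $N_1, N_2, B_2, C_1$ (by the commutation relations), so that modulo this subspace $N_1$ and $N_2$ genuinely commute. Induction on $\dim\,\mathrm{im}(C_1 B_2)$ should then reduce to the length-$2$ setting already settled, yielding the same parity obstruction and forcing $k$ to be even.

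\textbf{Main obstacle.} The delicate part is the parity count in the general case of the length-$3$ analysis. The single nonzero commutator $[N_1, N_2] = C_1 B_2$ obstructs a naive simultaneous triangularization of the quadruple $(N_1, N_2, B_2, C_1)$, and the block-rank formulae above are sensitive to the intersection of $\ker N_i$ with $\mathrm{im}\, C_1$ and $\mathrm{im}\, B_2$. Arranging the inductive step so that it preserves the \emph{exact} rank equality $\rank(y_i - I_n) = k$ (rather than merely $\leq k$) is the main hurdle; a likely way out is to combine the induction with the transpose symmetry $y \mapsto s y s^{-1}$, which swaps $t \leftrightarrow t^T$ and therefore swaps the roles of $y_1$ and $y_2$, providing the extra rigidity needed to rule out all odd values of $k$.
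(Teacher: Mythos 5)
Your setup coincides with the paper's: take $x=t^{T}$, reduce to showing $d(t,t^{T})\neq 3$, posit a commuting pair $y_{1}\in\Delta_{1}(t)$, $y_{2}\in\Delta_{1}(t^{T})$, and extract the block relations $[A_{1},B_{2}]=[A_{2},C_{1}]=[B_{2},C_{1}]=0$ and $[A_{1},A_{2}]=B_{2}C_{1}=C_{1}B_{2}$. Your parity disposal of $d(t,t^T)\leq 2$ and of the degenerate cases $A_{i}=I_{k}$ is correct. But the entire content of the lemma lies in the remaining case $N_{1},N_{2}\neq 0$ with $[N_{1},N_{2}]=C_{1}B_{2}\neq 0$, and there your argument is only a plan whose crucial step you yourself flag as unresolved. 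The invariance of $\mathrm{im}(C_{1}B_{2})$ under the four operators is true, but an induction on $\dim\mathrm{im}(C_{1}B_{2})$ passes only rank \emph{inequalities} to the subquotients, whereas the parity contradiction requires an exact even/odd count of $\rank(y_{i}-I_{n})$ on the whole space; the transpose symmetry you invoke does not restore that lost information. As written, the induction does not close, so the proof has a genuine gap at its crux.

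This is exactly where the paper does the work, by explicit normal forms rather than induction: it conjugates so that $A_{1}=J(k,r)$, uses the explicit shape of $C_{\mat(k,k)}(J(k,r))$ to reduce your $C_{1}$ (the paper's $B$) to a canonical matrix $R$ built from an idempotent-like block $I'$ and a block $S'$, reads off from the two commutation relations the precise block patterns of $P=A_{2}$ and $Q=B_{2}$ (displays (\ref{firsteq1}) and (\ref{secondeq2})), and then performs row and column eliminations splitting $y_{2}+I_{n}$ into a piece $P'+I$ and a small involution $y''+I$, obtaining $\rank(y_{2}+I_{n})=2\bigl(\rank(P'+I)+\rank(y''+I)\bigr)\leq k-1$, which is incompatible with $y_{2}\in X_{k}$ for $k$ odd. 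To complete your version you would need an inductive statement of exactly this strength, i.e.\ one that tracks how the equality $\rank(y_{i}-I_{n})=k$ distributes between $\mathrm{im}(C_{1}B_{2})$ and its complement; supplying that is essentially equivalent to redoing the paper's block computation.
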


\begin{proof}
We show that $d(t,t^{T})\geq 4$. By (\ref{deltaone}), it is enough to show that $d(t, t^{T})\neq 3$. Assume that $d(t, t^{T})=3$, i.e., there exist commuting involutions $x\in\Delta_{1}(t)$ and $y\in\Delta_{1}(t^{T})$. As $x, y\in X_{k}$ and $xy=yx$, we may assume that 
\[x=\left(\begin{array}{c|c}
J(k, r)\\
\hline B & J(k, r)
\end{array}\right)
\text{ and } 
 y=\left(\begin{array}{c|c}
P & Q\\
\hline  & P
\end{array}\right)
\]
for some involution $P\in \GL_{k}(F)$ and $1\leq r\leq [k/2]$. Note that the centralizer $C_{M(k, k)}(J(k,r))$ consists of 
\[\left(\begin{array}{c|c|c}
	B_{1}  & & \\
	\hline B_{2} & B_{3} &\\
	\hline B_{4} & B_{5} & B_{1}
	\end{array}\right) \text{ with } B_{1}\in M(r,r), B_{3}\in \GL_{k-2r}(F).
	\]
Thus, the matrix $Q=(Q_{i})$, $1\leq i\leq 5$, is of the form as above. Choose $M, N\in C_{G}(J(k,r))$ such that $R=M^{-1}BN$, where 
\[
R=\left(\begin{array}{c|c|c}
I' & \\
\hline  & I_{k-2r}\\
\hline S &  & I'
\end{array}\right) \text{ with } S=\left(\begin{array}{c|c}
\\
\hline  & S'
\end{array}\right),\, I'=\left(\begin{array}{c|c}
I_{l} & \\
\hline  & 
\end{array}\right)
\]
for some $S'\in M(r-l,r-l)$ and $0\leq l\leq r$, thus $x=\left(\begin{array}{c|c}
N\\
\hline  & M
\end{array}\right)\left(\begin{array}{c|c}
J(k,r)\\
\hline R & J(k,r)
\end{array}\right)\left(\begin{array}{c|c}
N\\
\hline  & M
\end{array}\right)^{-1}$. For simplicity, we shall assume that $M=N=I_{k}$. Indeed, the same argument below works for $y=\left(\begin{array}{c|c}
	N^{-1}PN & N^{-1}QM\\
	\hline  & M^{-1}PM
\end{array}\right)$.

Observe that $x\sim y$ implies that
\begin{equation}\label{equivalentequation}
J(k,r)P+PJ(k,r)=RQ=QR\, \text{ and }  PR=RP.
\end{equation}
Write $P=(P_{i})$ with $P_{1}, P_{9}\in M(r,r)$, $P_{5}\in M(k-2r, k-2r)$, $1\leq i\leq 9$ as below. Then, by the first equation in (\ref{equivalentequation}), we obtain that
\begin{equation}
\label{firsteq1}
P_{3}=\left(\begin{array}{c|c}
P_{3}'\\
\hline  & 
\end{array}\right), Q_{1}=\left(\begin{array}{c|c}
P_{3}'\\
\hline  & Q_{1}'
\end{array}\right), Q_{2}=P_{6}=\left(\begin{array}{c|c} P_{6}' & \end{array}\right),\, Q_{4}=\left(\begin{array}{c|c}
Q_{4}'\\
\hline  & Q_{4}''
\end{array}\right), Q_{5}=P_{2}=\left(\begin{array}{c} P_{2}'\\ \hline \, \end{array}\right),
\end{equation}
$Q_{3}=0$, and $P_{1}+P_{9}=\left(\begin{array}{c|c}
Q_{4}'\\
\hline  & Q_{1}'S'=S'Q_{1}'
\end{array}\right)$ for some $P_{3}', Q_{4}'\in M(l,l)$, $P_{6}'\in M(k-2r, l)$, $P_{2}'\in M(l, k-2r)$. Similarly, by the second equation in (\ref{equivalentequation}) we have
\begin{equation}\label{secondeq2}
P_{1}=\left(\begin{array}{c|c}
P_{1}'\\
\hline  & P_{1}'' 
\end{array}\right),\, P_{4}=\left(\begin{array}{c|c} P_{4}' & \end{array}\right),\, P_{7}=\left(\begin{array}{c|c}
P_{7}'\\
\hline  & P_{7}'' 
\end{array}\right),\, P_{8}=\left(\begin{array}{c} P_{8}'\\ \hline \, \end{array}\right),\, P_{9}=\left(\begin{array}{c|c}
P_{9}'\\
\hline  & P_{9}'' 
\end{array}\right),
\end{equation}
where $P_{1}', P_{7}', P_{9}'\in M(l,l)$, $P_{4}'\in M(k-2r, l)$, and $P_{8}'\in M(l, k-2r)$ such that $(P_{1}'')^{2}=(P_{9}'')^{2}=I_{r-l}$.

Using (\ref{firsteq1}) and (\ref{secondeq2}) together with elementary operations we see that $y$ is equivalent to $y'=\left(\begin{array}{c|c}
	P & Q'\\
	\hline  & P
\end{array}\right)$, i.e. $ \rank y=\rank y' $ where
\[P=\left(\begin{array}{c|c||c||c|c} P_{1}' & & P_{2}' & P_{3}' &\\ \hline
 & P_{1}'' & & &\\ \hline\hline
 P_{4}' & & P_{5} & P_{6}' & \\ \hline\hline
 P_{7}' & & P_{8}' & P_{9}' & \\ \hline
 & P_{7}'' & & & P_{9}''\end{array}\right) \text{ and } Q'=\left(\begin{array}{c|c||c||c|c}  & &  &  &\\ \hline
 & Q_{1}' & & &\\ \hline\hline
  & &  &  & \\ \hline\hline
  & &  &  & \\ \hline
 & Q_{4}'+Q_{1}'S' & & & Q_{1}'\end{array}\right). \]

Let $P'$ be the matrix obtained by deleting the second and fifth rows and columns of $P$. Then, we have $(P')^{2}=I_{k+2l-2r}$ as $(P)^{2}=I_{k}$, thus $\rank(P'+I_{k+2l-2r})\leq (k+2l-2r-1)/2$. Let $y''$ be the matrix obtained from $y'$ by deleting all rows and columns of $P$ and $Q'$ except the second and fifth rows and columns in each $P$ and $Q'$. Then, it follows by $(P)^{2}=I_{k}$ and $PQ=QP$ that $(y'')^{2}=I_{4r-4l}$. Hence, $\rank(y''+I_{4r-4l})\leq 2r-2l$. Therefore, $\rank(y+I_{n})=\rank(y'+I_{n})=2(\rank(P'+I_{k+2l-2r})+\rank(y''+I_{4r-4l}))\leq k-1$, which implies that $y\not\in X_{k}$, i.e., $d(t,x)\geq 4$.\end{proof}

In the remaining cases, we gives lower bounds of the diameter in $\Ch(F) =2$.

\begin{proposition}\label{even diam lower}
	Let $F$ be a field of characteristic $2$ and $n\geq 3$. Then, $\diam \Gamma(G,X_{k})\geq 4$ if $n=2k$ with $k$ odd and $\diam \Gamma(G,X_{k})\geq 3$ otherwise.
\end{proposition}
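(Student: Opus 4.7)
The first assertion, that $\diam\Gamma(G,X_k)\geq 4$ when $n=2k$ with $k$ odd, is immediate from Lemma \ref{k odd n 2k even}: there $d(t,t^T)\geq 4$ was established.

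For the second assertion, I must exhibit, for each remaining case (namely $2k<n<4k$, as well as $n=2k$ with $k$ even), an involution $x\in X_k$ with $d(t,x)\geq 3$. The overall strategy mirrors Proposition \ref{odd diam lower}: construct $x$ explicitly, assume for contradiction that $d(t,x)=2$, and exploit the block description of $\Delta_1(t)$ in (\ref{deltaone}) to derive an impossible rank relation on a key submatrix. Recall that in characteristic $2$, any $y\in\Delta_1(t)$ has the form $y=\left(\begin{array}{c|c} A' \\\hline B & C\end{array}\right)$ with $A'=\left(\begin{array}{c|c} P & Q \\\hline & C\end{array}\right)$, $(A')^2=I_{n-k}$, $C^2=I_k$, $P^2=I_{n-2k}$, and $A'B=BC$.

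First I would handle $2k<n<3k$ and then $3k\leq n<4k$, following the two-block template of Proposition \ref{odd diam lower}. The role of the matrix $A$ chosen there (whose only commuting involution is $-I_k$) is played here by a block of the form $J(k,1)$ or a companion matrix whose centralizer contains no nontrivial involution compatible with the required block structure of $y$. Concretely, one takes $x=gzg^{-1}$, where $z$ is a standard representative of a $t$-adjacent element in $X_k$ and $g$ is a lower block-triangular matrix whose $(2,1)$-block is populated by $A$. Writing out $xy=yx$ block by block, using that $y$ has zero upper-right block, one sees that $C$ must commute with the inserted matrix $A$, forcing $C=I_k$. But then $y$ has trivial $(2,2)$-block, contradicting $y\in X_k$.

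For $n=2k$ with $k$ even I would use a closely related construction: take $x$ of Jordan type $J(k,k/2)\oplus(\text{lower-triangular conjugate})$, and again assume $y\in\Delta_1(t)$ commutes with $x$. The block equations $A'B=BC$ and $xy=yx$, together with $n=2k$ (so $P$ is empty), propagate the commutation to a condition on $B$ and $C$ that places $y$ in the image of Lemma \ref{vxvteq}, at which point the rank of $B$ is forced in a way incompatible with $y\in X_k$.

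The main obstacle, compared with the $\Ch(F)\neq 2$ case, is that involutions in characteristic $2$ have nontrivial Jordan structure, so the centralizer computations are more delicate: the ``only commuting involution is $-I_k$'' device is unavailable. The resolution is to build $x$ using the cyclic action of a companion block so that the centralizer of the relevant submatrix inside the involutions of $\GL_k(F)$ reduces to $\{I_k\}$; verifying this reduction, and then matching it against the block equations $A'B=BC$ and $PQ=QC$ implicit in the form of $y$, is the computational heart of the argument.
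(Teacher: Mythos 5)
Your reduction of the first assertion to Lemma \ref{k odd n 2k even} is exactly what the paper does, and your high-level plan for the second assertion (exhibit $x\in X_k$ commuting with nothing in $\Delta_1(t)$, built around a matrix whose centralizer contains no nontrivial involution) is also the right idea. But the concrete construction you describe cannot work. You propose $x=gzg^{-1}$ with $z$ a representative of a $t$-adjacent element and $g$ block lower triangular. In characteristic $2$ every element of $\Delta_1(t)$ is block lower triangular with respect to the $(n-k,k)$ partition, so such an $x$ is itself block lower triangular, and Proposition \ref{lowertriangular} then gives $d(t,x)\leq 2$ unconditionally in characteristic $2$ — your witness is automatically at distance $\leq 2$. (The template of Proposition \ref{odd diam lower} that you are mirroring does \emph{not} conjugate a $t$-adjacent element: its middle matrix has a nonzero upper-right block.) The paper avoids this by placing the key matrix $B$ in an \emph{upper} off-diagonal position, taking $x=\left(\begin{smallmatrix} I_{n-2k} & & \\ A & I_k & B \\ & & I_k\end{smallmatrix}\right)$ with $A=0$ for $2k\leq n<3k$ and $A=(B\mid 0)$ for $3k\leq n<4k$, where $B$ is invertible and no conjugate of $B$ commutes with any $J(k,r)$.

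Two further points would need repair even after fixing the construction. First, your closing step ``$C=I_k$, contradicting $y\in X_k$'' is not a contradiction by itself: $t=J(n,k)$ has $(2,2)$-block $I_k$ and lies in $X_k$. One must show that the commutation relations force \emph{all} blocks of $y-I_n$ to have total rank $<k$; in the range $3k\leq n<4k$ this requires writing the remaining block $C$ as $PJ(n-2k,s)P^{-1}$ and bounding the rank of $y+I_n$ explicitly, which is the real computational content of the paper's argument and is absent from your sketch. Second, $J(k,1)$ cannot serve as the ``centralizer-free'' matrix, since it commutes with the nontrivial involution $J(k,1)$ itself; the correct choices are a diagonal matrix with distinct entries ($F$ infinite) or a matrix with irreducible minimal polynomial of degree $k$ ($F$ finite), whose centralizers are commutative and hence contain no nontrivial involutions in characteristic $2$.
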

\begin{proof}
	By Lemma \ref{k odd n 2k even}, it remains to prove the case except that $n=2k$ with $k$ odd. We find an involution $x\in X_{k}$ which does not commute with any involution in $\Delta_{1}(t)$. Observe that any square matrix which commutes with $J(k, r)$ is of the form $\left(\begin{array}{c|c}
	\begin{array}{c|c} X & \end{array} & \, \\
	\hline Y & \begin{array}{c}  \\ \hline X \end{array}
	\end{array}\right)$
	with $X\in M(r,r)$. Choose an invertible matrix $B$ such that any conjugate of $B$ does not commute with $J(k,r)$ for all $1\leq r\leq [k/2]$. Such a matrix exists as we can take a diagonal matrix with distinct entries for $B$ if $F$ is infinite, and a matrix having an irreducible minimal polynomial of degree $k$ for $B$ otherwise. Consider an involution 
	\[
	x=\left(\begin{array}{c|c|c}
	I_{n-2k} & \\
	\hline A & I_{k} & B\\
	\hline  &  & I_{k}
	\end{array}\right)\in X_{k} \text{ with } A=\begin{cases} 0 & \text{ if } 2k\leq n<3k\\  (B\,|\,\,\,\,) & \text{ if } 3k\leq n<4k.\end{cases}
	\]
	
	
	We claim that $d(t,x)\geq 3$. If not, there exists 
	\[y=\left(\begin{array}{c|c|c}
	C & \\
	\hline  & D\\
	\hline E &  & D
	\end{array}\right)\in \Delta_{1}(t) \text{ with } D\in M(k,k),\, E=0 \text{ if }  2k\leq n<3k\]
	such that $AC+DA=BE$ and $DB=BD$. Hence, due to the choice of $B$, it suffices to show that $D$ is an involution. It is obvious that $y\not\in X_{k}$ if $2k\leq n<3k$ and $D=I_{k}$, thus $D$ is an involution. Now assume that $3k\leq n<4k$ and $D=I_{k}$. Then, $C$ is an involution as $B\in \GL_{k}(F)$ and $y\in X_{k}$. Hence, $C=PJ(n-2k,s)P^{-1}$ for some $1\leq s< k$ and $P\in\GL_{n-2k}(F)$. Write $P=\left(\begin{array}{c|c}
	P_{1} & P_{2}\\
	\hline P_{3} & P_{4}
	\end{array}\right)$ with $P_{1}\in M(k,k), P_{2}\in M(k, n-3k)$. Then, we obtain
	\[
	y=g\left(\begin{array}{c|c|c}
	J(n-2k,s) & \\
	\hline  & I_{k}\\
	\hline \left(\begin{array}{c|c}
	P_{1} & P_{2}\end{array}\right)(J(n-2k,s)+I_{n-2k}) &  & I_{k}
	\end{array}\right)g^{-1} \text{ with } g=\left(\begin{array}{c|c|c}
	P & \\
	\hline  & I_{k}\\
	\hline  &  & I_{k}
	\end{array}\right)
	\]
	and the rank of $ \left(\begin{array}{c|c}
	P_{1} & P_{2}\end{array}\right)(J(n-2k,s)+I_{n-2k}) $ is at most $s$. Since the nonzero elements are in the first $s$-columns, $\rank y=s<k$,  concluding that $y\notin X_{k}$. Thus, $D$ is an involution.\end{proof}

Now we establish tight upper bounds of the diameter in case $\Ch(F)=2$. For the case of $3k\leq n$, sharp upper bounds were obtained in \cite[Theorem 1.3]{BBPR}. In the following theorem, we consider the remaining case where $2k\leq n<3k$. In the proof, we shall use the involution in (\ref{tm}) and Proposition \ref{lowertriangular}.

\begin{theorem}\label{even diam}
Let $F$ be a field of characteristic $2$ and $n\geq 3$. Then, $\diam \Gamma(G,X_{k})=4$ if $n=2k$ with $k$ odd and $\diam \Gamma(G,X_{k})=3$ otherwise.
\end{theorem}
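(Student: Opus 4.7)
The lower bounds are supplied by Lemma \ref{k odd n 2k even} and Proposition \ref{even diam lower}, so only the matching upper bounds need to be verified. The range $3k \leq n$ is already handled by \cite[Theorem 1.3]{BBPR}, so throughout it suffices to treat the range $2k \leq n < 3k$.

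The plan is to adapt to characteristic $2$ the reduction scheme developed in the proof of Theorem \ref{odd diam}. For an arbitrary $x \in X_{k}$, set $m = \dim([V,x] \cap [V,t])$. Since $C_{G}([V,t])$ acts transitively on the $k$-dimensional subspaces of $V$ having a prescribed $m$-dimensional intersection with $[V,t]$, there is a block lower triangular $g \in C_{G}([V,t])$ with $[V,x] = [V, g w_{m} t (g w_{m})^{-1}]$. Setting $h := g w_{m}$ and applying the characteristic-$2$ statement of Lemma \ref{vxvteq} to $h^{-1} x h$, one may write $x = h y h^{-1}$ where $y \in X_{k}$ has the explicit shape of $t_{m}$ perturbed by a rank-$k$ off-diagonal block $B$. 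The symmetric sub-case $2m < k$ is treated by the same device using $t^{T}$ in place of $t$ and $w_{k-m}$ in place of $w_{m}$.

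In the generic regime $n > 2k$ or $n = 2k$ with $k$ even, I would next construct an involution $z \in \Delta_{1}(t)$ commuting with $y$, following the permutation-of-blocks strategy used in the proof of Proposition \ref{lowertriangular} and the subsequent case analysis of Theorem \ref{odd diam}, while exploiting the Jordan structure of $J(k, k-m)$ visible in $t_{m}$. Since every element of $\Delta_{1}(t)$ is block lower triangular in characteristic $2$ by (\ref{deltaone}), the conjugate $h z h^{-1}$ is again block lower triangular, so Proposition \ref{lowertriangular} yields $d(t, h z h^{-1}) \leq 2$. Combined with $h z h^{-1} \sim x$, this gives $d(t, x) \leq 3$, which suffices in this regime.

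The main obstacle is the exceptional case $n = 2k$ with $k$ odd. When $2m < k$ the matrix $y$ is block \emph{upper} triangular, Proposition \ref{lowertriangular} cannot be applied directly to $h y h^{-1}$, and the parity obstruction uncovered in the proof of Lemma \ref{k odd n 2k even} shows that no $z \in \Delta_{1}(t)$ commuting with $y$ need exist. To still obtain $d(t, x) \leq 4$, I would insert one extra vertex in the path: first produce an intermediate block lower triangular involution $x'' \in X_{k}$ with $d(x, x'') \leq 2$, by relaxing the requirement on $z$ (asking only that it commute with $y$ and lie in $X_{k}$, not in $\Delta_{1}(t)$) and then re-invoking the main reduction on the pair $(x, x'')$; then apply Proposition \ref{lowertriangular} once more to get $d(t, x'') \leq 2$. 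The triangle inequality yields $d(t, x) \leq 4$, matching the lower bound of Lemma \ref{k odd n 2k even}. The delicate piece is the first step of this detour: identifying exactly which block upper triangular $y$ obstruct the existence of a commuting $z \in \Delta_{1}(t)$ and providing a uniform substitute two-step path for them, which I expect to require a case analysis parallel to, but subtler than, the one appearing in the proof of Proposition \ref{lowertriangular}.
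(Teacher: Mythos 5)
Your outline reproduces the paper's architecture --- lower bounds from Lemma \ref{k odd n 2k even} and Proposition \ref{even diam lower}, the range $3k\leq n$ delegated to \cite[Theorem 1.3]{BBPR}, the reduction $x=hyh^{-1}$ via transitivity of $C_{G}([V,t])$ on subspaces with prescribed intersection with $[V,t]$, and a final appeal to Proposition \ref{lowertriangular} --- but it stops short of the actual content. The entire difficulty of the upper bound lies in exhibiting, for each normal form of $y$ (which depends on $m$, on $s=3k-n$, and on the rank data $l,l',l''$ of the block $B$ after row and column reduction), an explicit involution $z$ with $y\sim z$ whose conjugate lands in the block lower triangular set to which Proposition \ref{lowertriangular} applies; the paper spends most of the proof constructing these matrices (the involutions $I_{1}(k,\lceil s/2\rceil)$, $I_{2}(k,\lceil s/2\rceil)$, the modified $z'$ for odd $s$, the chain (\ref{firstcaseeq}) in the exceptional case), and your proposal replaces all of this with ``I would construct $z$ following the permutation-of-blocks strategy.'' In the case $n=2k$ with $k$ odd you correctly identify that a two-step detour through an intermediate involution at distance at most $2$ from both $x$ and $t$ is needed, but you explicitly defer ``the delicate piece,'' which is precisely the step that must be proved. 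As written this is a plan, not a proof, and the deferred constructions are where a proof could still fail.

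Two further points. First, your claim that $hzh^{-1}$ is ``again block lower triangular'' because $z\in\Delta_{1}(t)$ is block lower triangular is false in general: $h$ contains the permutation factors $w_{m}$ (or $w_{k-m}w_{0}$), and conjugation by a permutation does not preserve block lower triangularity. The paper must verify this property separately for each specific $z$ it builds (e.g.\ ``As $w_{k-m}z'w_{k-m}$ is block lower triangular\dots''), and the choice of $z$ is constrained by exactly this requirement. Second, you miss the paper's cleanest device: when $2m\geq k$ no commuting-involution construction is needed at all, because Lemma \ref{vxvteq} in characteristic $2$ gives $d(t,gtg^{-1})\leq 1$ and $d(gt_{m}g^{-1},x)\leq 1$ outright, so the path $t\sim gtg^{-1}\sim gt_{m}g^{-1}\sim x$ settles that case with distance at most $3$ --- including when $n=2k$ with $k$ odd. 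The genuine work, and the only source of diameter $4$, is confined to $2m<k$; your case split by the parity of $k$ and the size of $n$, copied from the characteristic-different-from-$2$ argument, does not align with where the obstruction actually sits.
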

\begin{proof}
By Proposition \ref{even diam lower} it suffices to show that the lower bound is sharp. Let $x\in X_{k}$, $\dim([V,t]\cap[V,x])=m$, and $s=3k-n$. For the case of $3k\leq n$, we refer to the proof of \cite[Theorem 1.3]{BBPR}. So we may assume that $2k\leq n<3k$. First, we consider the case where $2m\geq k$. Then, by transitivity we have $[V,x]=[V,gt_{m}g^{-1}]$ for some $g\in C_{G}([V,t])$. Hence, by Lemma \ref{vxvteq}, we obtain $d(x, gt_{m}g^{-1})=d(t, gtg^{-1})=1$. As $t_m\sim t$, we conclude that $d(t, x)\leq d(t,gtg^{-1})+d(gtg^{-1},gt_{m}g^{-1})+d(gt_{m}g^{-1},x)\leq3$.

Now we assume that $2m<k$. Again by transitivity, there exists a block lower triangular
matrix $g$ such that $[V,x]=[V,gt_{m}g^{-1}]=[V, g(w_{k-m}t^{T}w_{k-m})g^{-1}]=[V, gw_{k-m}w_{0}tw_{0}w_{k-m}g^{-1}]$.
Hence, we have
\begin{equation}\label{xeventwo}
x=gw_{k-m}w_{0}\left(\begin{array}{c|c}
I_{n-k}\\
\hline B & I_{k}
\end{array}\right)w_{0}w_{k-m}g^{-1}
\end{equation}
for some $B\in M(k,n-k)$ with $\rank(B)=k$. Let $y$ denote the middle matrix in the above decomposition. We divide the proof into the following three cases (For the second and third cases, we always exclude the first case).

\framebox{{\it Case}: $n=2k$ with $k$ odd.} Let $k=2r+1$. Write $w_{0}yw_{0}=ut^{T}u^{-1}$ for some block diagonal matrix $u\in C_{G}([V,t])$. If $m=0$, then by (\ref{xeventwo}) we have $x=ht^{T}h^{-1}$, where $h=gu$. In this case, one has the following commuting involutions in $X_{k}$
\begin{equation}\label{firstcaseeq}
t^{T}\sim \left(\begin{array}{c|c}
J(k,r)^{T} & e_{(r+1)(r+1)}\\
\hline  & J(k,r)^{T}
\end{array}\right)\sim \left(\begin{array}{c|c}
I(k,r)^{T}\\
\hline e_{1k} & I^{\prime}(k,r)^{T}
\end{array}\right)=:z
\end{equation}
where, $I^{\prime}(k,r)=\left(\begin{array}{c|c}
I_{r}\\
\hline \begin{array}{c}
I_{r}\\
\hline \\
\end{array} & I_{k-r}
\end{array}\right)$. Since $hzh^{-1}$ is block lower triangular, it follows from Proposition \ref{lowertriangular} that $d(t, hzh^{-1})\leq 2$, thus $d(t,x)\leq 4$. Similarly, if $m>0$, then instead of  (\ref{firstcaseeq}) we use 
\[t^{T}\sim \left(\begin{array}{c|c}
I(k, r) & e_{11}\\
\hline  & I(k,r)
\end{array}\right)=:z'.\]
As $w_{k-m}z'w_{k-m}$ is block lower triangular, it follows from Proposition \ref{lowertriangular} that $d(t, gw_{k-m}z'w_{k-m}g^{-1})\leq 2$, thus $d(t,x)\leq 4$.

\framebox{{\it Case}: $2k\leq n<3k$ with $m=0$.} It follows from (\ref{xeventwo}) that $x=gw_{0}yw_{0}g^{-1}$. Let $P\in \GL_{k}(F)$ and let $Q\in \GL_{n-k}(F)$ be a block lower triangular matrix of the form $Q=\left(\begin{array}{c|c}
Q_{1} & \\\hline
Q_{2} & Q_{3}
\end{array}\right)$, $Q_{1}\in M(n-2k, n-2k)$, $Q_{3}\in M(k, k)$ such that $PBQ=\left(\begin{array}{c|c}
D_{1} & D_{2}\end{array}\right)$, where 
\[D_{1}=\left(\begin{array}{c|c}
I_{l} & \\ \hline & \end{array}\right)\in M(k, n-2k) \text{ and } D_{2}=\left(\begin{array}{c|c}
 & \\ \hline & I_{k-l}\end{array}\right)\in M(k, k)\]
for some $0\leq l\leq n-2k$. Then, we obtain 
\begin{equation}\label{eventhmformx}
x=h\left(\begin{array}{c|c|c}
I_{n-2k} & \\
\hline D_{1} & I_{k} & D_{2}\\
\hline  &  & I_{k}
\end{array}\right)h^{-1} \text{ with } h=gw_{0}\left(\begin{array}{c|c}
Q\\
\hline  & P^{-1}
\end{array}\right)w_{0}\in C_{G}([V, t]).
\end{equation}
Let $y'$ denote the middle matrix in the above decomposition. First assume that either $l<n-2k$ or $l=n-2k$ with $s$ even. Consider an involution
\[z=\left(\begin{array}{c|c|c} 
I_{n-2k} & &\\ \hline
\begin{array}{c} I_{n-2k}\\ \hline \, \end{array}
& I(k, \frac{s}{2})^{T} & \\ \hline
& & I(k,\frac{s}{2})^{T}
\end{array}\right) \big(\text{resp.} \left(\begin{array}{c|c|c} 
I_{n-2k} & &\\ \hline
\begin{array}{c|c} I_{n-2k-1} & \\ \hline \, \end{array}
& I(k, \lceil\frac{s}{2}\rceil)^{T} & \\ \hline
& & I(k, \lceil\frac{s}{2}\rceil)^{T}
\end{array}\right)\big) \in \Delta_{1}(t)\]
if $s$ even (resp. otherwise). Then, by a direct computation we have $z\sim y'$. Hence, by Proposition \ref{lowertriangular} we conclude that $d(t, x)=d(t, hzh^{-1})+d(hzh^{-1}, x)\leq 3$. If $l=n-2k$ with $s$ odd, the same proof works if we replace $z$ by
\[z'=\left(\begin{array}{c|c|c} 
I_{n-2k} & &\\ \hline
\begin{array}{c} I_{n-2k}\\ \hline \, \end{array}
& I'(k, [\frac{s}{2}]) & \\ \hline
& & I(k, \lceil\frac{s}{2}\rceil)^{T}
\end{array}\right)\in \Delta_{1}(t), \text{ where } I'(k, [\frac{s}{2}])=\left(\begin{array}{c|c|c} I_{k-\lceil\frac{s}{2}\rceil} & & \begin{array}{c} \, \\ \hline I_{[\frac{s}{2}]} \end{array} \\ \hline & 1 & \\ \hline & & I_{[\frac{s}{2}]}\end{array}\right).
\]

\framebox{{\it Case}: $2k\leq n<3k$ with $m>0$.} First assume that $s\leq 2m$. Then, $z:=w_{k-m}w_{0}J(n, k)w_{0}w_{k-m}$ is block lower triangular. Hence, by Proposition \ref{lowertriangular} we have $d(t,gzg^{-1})\leq 2$. As $J(n, k)\sim y$, we obtain $d(t, x)\leq 3$. From now on we assume that $s>2m$.

Let $P\in \GL_{k}(F)$ and let $Q\in \GL_{n-k}(F)$ be  block lower triangular matrices of the forms
\begin{equation}\label{formofpq}
P=\left(\begin{array}{c|c}
P_{1} & \\\hline
P_{2} & P_{3}
\end{array}\right) \text{ and } Q=\left(\begin{array}{c|c}
Q_{1} & \\\hline
Q_{2} & \begin{array}{c|c}
Q_{3} & \\\hline
Q_{4} & Q_{5}
\end{array}
\end{array}\right), P_{1}\in \GL_{k-m}(F), Q_{1}\in \GL_{n-2k}(F), Q_{3}\in \GL_{m}(F)
\end{equation}
such that $PBQ=\left(\begin{array}{c} \begin{array}{c|c}  \begin{array}{c|c} D_{1} & D_{2} \\ \hline &  \end{array} & D_{3}  \end{array} \\ \hline D_{4}   \end{array}\right)$, where
\[D_{1}=\left(\begin{array}{c|c}
 I_{l'} & \\\hline
& 
\end{array}\right)\in M(k-m-l, n-2k),\, D_{2}=\left(\begin{array}{c|c}
& \\\hline
I_{l''}& 
\end{array}\right)\in M(k-m-l, m),\, D_{3}=\left(\begin{array}{c|c}
 & \\\hline
& I_{l}
\end{array}\right)\in M(k-m, k-m)\]
for some $s-2m\leq l\leq k-m$ and $l'+l''=k-m-l$. Then, $x=ghw_{k-m}w_{0}y'(ghw_{k-m}w_{0})^{-1}$, where $h=w_{k-m}w_{0}\left(\begin{array}{c|c}
Q & \\
\hline  & P^{-1}
\end{array}\right)w_{0}w_{k-m}$ is a block lower triangular matrix in $C_{G}([V,t])$ and $y'=\left(\begin{array}{c|c} I_{n-k} & \\ \hline PBQ & I_{k} \end{array}\right)$.

Assume that either $l>s-2m$ or $l=s-2m$ with $s$ even. By multiplying a permutation $P'\in\GL_{k}(F),Q'\in \GL_{n-k}(F)$ of the form in (\ref{formofpq}) we change $D_{3}$ in $PBQ$ to
\begin{equation}
\label{D3PBQ}
\left(\begin{array}{c|c|c|c}
I_{p} &  & \\
\hline  & I_{\lceil \frac{s}{2} \rceil-m} & \\
\hline  &  & D_{3}'\\
\hline  &  &  & I_{\lceil \frac{s}{2} \rceil-m}
\end{array}\right)
\text{ with } p=\begin{cases}
l+2m-2\lceil \frac{s}{2} \rceil & \text{ if \ensuremath{l\leq k-2m}}\\
l+m-2\lceil \frac{s}{2} \rceil & \text{ if \ensuremath{l>k-2m} and \ensuremath{l\geq2\lceil \frac{s}{2} \rceil-m}}\\
0 & \text{ if \ensuremath{l>k-2m} and \ensuremath{l<2\lceil \frac{s}{2} \rceil-m}},
\end{cases}
\end{equation}
where 
\[
D_{3}'=\begin{cases}
0 & \text{ if \ensuremath{l\leq k-2m}}\\
I_{m} & \text{ if \ensuremath{l>k-2m} and \ensuremath{l\geq2\lceil \frac{s}{2} \rceil-m}}\\
\left(\begin{array}{c|c}
I_{l-2(\lceil \frac{s}{2} \rceil-m)}\\\hline
& 
\end{array}\right) & \text{ if \ensuremath{l>k-2m} and \ensuremath{l<2\lceil \frac{s}{2} \rceil-m}}.
\end{cases}
\]
We replace $ P $ by $ P'P $ and $Q$ by $QQ'$ in both $h$ and $y'$, which with abuse of notations we still denote by $h$ and $y'$. Consider an involution
\[z'=\left(\begin{array}{c|c|c} 
	I_{n-2k} & &\\ \hline
	 & I_{1}(k, \lceil \frac{s}{2} \rceil) & \\ \hline
	\begin{array}{c|c} I_{n-2k+s-2\lceil \frac{s}{2} \rceil} &\\ \hline \vspace{0.18cm} \,& \end{array}&  \begin{array}{c|c}  & \\ \hline I_{2m} & \end{array} & I_{2}(k, \lceil \frac{s}{2} \rceil)
\end{array}\right), \]
\[\text{ where }
I_{1}(k, \lceil \frac{s}{2} \rceil)=\left(\begin{array}{c|c|c}
I_{k-s} & \\
\hline  & I_{\lceil \frac{s}{2} \rceil} & \begin{array}{c|c}
 & \\
\hline  & I_{\lceil \frac{s}{2} \rceil-m}
\end{array}\\
\hline  &  & I_{\lceil \frac{s}{2} \rceil}
\end{array}\right)
\text{ and }\,\, I_{2}(k,\lceil \frac{s}{2} \rceil)=\left(\begin{array}{c|c|c}
I_{k-s} & \\
\hline  & I_{\lceil \frac{s}{2} \rceil} & \begin{array}{c|c}
I_{\lceil \frac{s}{2} \rceil-m} & \\
\hline  & 
\end{array}\\
\hline  &  & I_{\lceil \frac{s}{2} \rceil}
\end{array}\right).
\]

Then, by a direct calculation, we see that $y'\sim z'$. Since $ghw_{k-m}w_{0}z'(ghw_{k-m}w_{0})^{-1}$ is block lower triangular in $C_{G}[V,t]$, its distance from $t$ is at most two by Proposition \ref{lowertriangular}. Therefore, $d(t, x)\leq 3$.

Similarly, if $l=s-2m$ with $s$ odd, then the same proof works if we replace (\ref{D3PBQ}) and $z'$ by 
\[\left(\begin{array}{c|c|c|c|c}
 &  &  & & \\
\hline 
 & I_{[\frac{s}{2}]-m} &  & & \\
\hline 
&  &  & &\\
 \hline
& & & 1 &\\
\hline 
 &  & & & I_{[\frac{s}{2}]-m}
\end{array}\right)
\text{ and } 
\left(\begin{array}{c|c|c} 
I_{n-2k} & &\\ \hline
& I_{1}(k, \lceil \frac{s}{2} \rceil) & \\ \hline
\begin{array}{c} I_{n-2k} \\ \hline \vspace{0.18cm} \,\end{array}&  \begin{array}{c|c}  & \\ \hline I_{2m} & \end{array} & I_{2}'(k, [\frac{s}{2}])
\end{array}\right)
\]
where the middle zero block in the first one has size $m$ and $I_{2}'(k,[\frac{s}{2}]):=\left(\begin{array}{c|c|c|c}
	I_{k-s} &  & \\
	\hline  & I_{[\frac{s}{2}]} &  & \begin{array}{c|c}
		I_{[\frac{s}{2}]-m} & \\\hline
		& 
	\end{array}\\
	\hline  &  & 1\\
	\hline  &  &  & I_{[\frac{s}{2}]}
\end{array}\right)$, respectively.\end{proof}

Applying the same argument as used in the proof of Theorem \ref{even diam}, we determine the diameter of the commuting graph on the set of all involutions.

\begin{corollary}\label{allinvolution}
Let $F$ be a field of characteristic $2$, $n\geq 3$, and $X$ the set of all involutions in $G$. Then, $\diam\Gamma(G, X)=3$. 
\end{corollary}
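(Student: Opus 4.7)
The plan is to follow the argument of Theorem \ref{even diam} while leveraging the fact that $X=\bigcup_k X_k$ allows intermediate involutions from any class $X_j$. For the lower bound $\diam\Gamma(G,X)\geq 3$, take $k=\lfloor n/2\rfloor$, $t=I(n,k)$, and let $x$ be the block diagonal involution with an $I_{n-2k}$ summand together with a block $\bigl(\begin{smallmatrix} I_k & B \\ 0 & I_k \end{smallmatrix}\bigr)$, where $B\in\GL_k(F)$ is chosen with irreducible minimal polynomial of degree $k$; then the centralizer of $B$ in $\GL_k(F)$ is the field $F[B]$, which contains no nontrivial involution. The block centralizer calculation from Proposition \ref{even diam lower} then extends verbatim: any involution $y\in X$ commuting with both $t$ and $x$ must be of the form $\diag(A_{11},D,D)$ with $D^2=I_k$, $DB=BD$, and $A_{11}^2=I_{n-2k}$. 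The choice of $B$ forces $D=I_k$, and since $n-2k\in\{0,1\}$ the $\GL_{n-2k}$-factor supports no nontrivial involution in characteristic $2$, so $A_{11}=I$ and $y=I$; hence $d(t,x)\geq 3$.

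For the upper bound, let $t\in X_k$ and $x\in X_{k'}$ with $k\leq k'$. If $k+k'\leq n-2$, both $V^t\cap V^x$ and the left-fixed pair $V^{t^T}\cap V^{x^T}$ have dimension at least $2$, and a common transvection $\tau\in X_1$ gives $d(t,x)\leq 2$. If $k=k'$ and $(n,k)$ is not of the form $n=2k$ with $k$ odd, Theorem \ref{even diam} yields $d(t,x)\leq 3$ directly. The remaining case---$n=2k$ with $k$ odd and $t,x\in X_k$, where Theorem \ref{even diam} only gives $d\leq 4$---is the crux. Following the reduction in that proof, we may assume $x=ht^Th^{-1}$ with $h\in C_G([V,t])$ block lower triangular having diagonal blocks $P,R$ and lower block $Q$. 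A direct computation shows $V^x=\{(Pu,Qu):u\in F^k\}$ and the left fixed subspace of $x$ equals $\{(w_2QP^{-1},w_2):w_2\in F^k\}$; in characteristic $2$ these pair trivially (the identity $2w_2Qu=0$), so any nonzero $v\in V^x$ and any nonzero $w$ in the left fixed subspace of $x$ yield a transvection $y_2=I+vw\in X_1\cap C_G(x)$.

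To build $y_1\in C_G(t)\cap C_G(y_2)$, note that $V^t\subset \{0\}\times F^k$ and the left fixed subspace of $t$ lies in $F^k\times \{0\}$---complementary coordinate blocks of $V=F^{2k}$---so the pairing $w'v'$ vanishes automatically for any $v'\in V^t$ and $w'$ in the left fixed subspace of $t$. Picking $v'\in V^t\cap \ker w$ and $w'$ in $(\text{left fixed of }t)\cap \{w':w'v=0\}$---each a codimension-one constraint in a $k$-dimensional space, hence nontrivial for $k\geq 2$---produces a transvection $y_1=I+v'w'\in X_1$ commuting with both $t$ and $y_2$. The chain $t\sim y_1\sim y_2\sim x$ then has length $3$. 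The other sub-cases of the exceptional situation (namely $2m\geq k$, and $2m<k$ with $m>0$) are handled analogously via the respective reductions in the proof of Theorem \ref{even diam}, and the mixed boundary case $\{k,k'\}=\{n/2-1,n/2\}$ for even $n$ reduces to the same two-transvection argument. The main technical obstacle is these automatic vanishings $wv=0=w'v'$ of the pairings, which rest on the complementary coordinate-block structure of the fixed and left fixed subspaces of $t$ and $t^T$ in characteristic $2$.
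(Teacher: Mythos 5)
Your proof is correct in outline; the upper bound takes a genuinely different, more geometric route than the paper's, while your lower bound is essentially the paper's own argument. The paper also passes through two elements of $X_{1}$: it reduces to bounding $d(x, J(n,i))$ for the standard representatives, extracts from the normal forms (\ref{xeventwo}) and (\ref{formxcor}) an $x_{1}\in X_{1}$ commuting with $x$, and then locates $y_{1}\in X_{1}\cap C_{G}(J(n,i))$ commuting with $x_{1}$ by explicit block computations. You replace those computations by the single observation that $I+vw$ is an involution in $X_{1}$ commuting with an involution $z$ as soon as $zv=v$, $wz=w$, and $wv=0$, and then solve a handful of linear conditions on fixed and left-fixed spaces; what this buys is a conceptual argument in place of the paper's case-by-case matrix manipulation. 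Your deferred sub-cases do check out: for $n=2k$ and any $x\in X_{k}$ one has $\ker(x-I)=\mathrm{im}(x-I)$, so the pairing between $V^{x}$ and the left-fixed space of $x$ vanishes identically (your automatic vanishing is not special to $x=ht^{T}h^{-1}$, which also disposes of the sub-case $0<2m<k$), and in the mixed case $\{k,k'\}=\{n/2-1,n/2\}$ the condition $wv=0$ can simply be imposed as one extra linear constraint, affordable because the relevant spaces have dimension at least $n/2$.

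One step does fail as written: in the lower bound, a matrix $B$ with irreducible minimal polynomial of degree $k$ need not exist --- over an algebraically closed field of characteristic $2$ there are no irreducible polynomials of degree $k\geq 2$. The property actually needed is only that no nontrivial involution commutes with $B$, and for infinite $F$ a diagonal matrix with distinct entries works (its centralizer is the diagonal subgroup, which contains no nontrivial involution in characteristic $2$); this is exactly the dichotomy used in Proposition \ref{even diam lower}. With that substitution your lower bound coincides with the paper's, which exhibits the same pair of involutions and the same centralizer computation forcing any common neighbour to be the identity.
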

\begin{proof}
If $n=3$, then $X=X_{1}$, thus the result follows from \cite[Theorem 3.1]{BBPR}. So we assume that $n\geq 4$. Let $x\in X_{k}$ for some $1\leq k\leq n/2$. In order to establish our upper bound, it suffices to show that $d(x, t'_{i})\leq 3$ for all $1\leq i\leq n/2$, where $t'_{i}=J(n,i)\in X_{i}$. Note that $t'_{i}\sim t'_{k}$ for all $i, k$. If $n\geq 4k$, then by \cite[Lemma 4.3]{BBPR}, we have $d(x, t'_{k})\leq 2$, thus $d(x,t'_{i})=d(x, t'_{k})+d(t'_{k},t'_{i})\leq 3$. Hence, we assume that $n/4<k\leq n/2$. It follows from the proof of Theorem \ref{even diam} that $x$ is either of the form (\ref{xeventwo}) for $2m<k$ or
\begin{equation}\label{formxcor}
x=gw_{m}\left(\begin{array}{c|c}
I_{n-k}\\
\hline B & I_{k}
\end{array}\right)w_{m}g^{-1}
\end{equation}
for $2m\geq k$. Observe that the middle matrices in (\ref{xeventwo}) and (\ref{formxcor}) commute with $t'_{1}$. Hence, it is enough to show that $d(x_{1}, t'_{i})\leq 2$ for any $x_{1}\in X_{1}$ and all $2\leq i\leq n/2$. Let $m_{1}=\dim([V,x_{1}]\cap [V,t_{1}])$. If $m_{1}=1$, then by (\ref{formxcor}) we have a path $x_{1}\sim t'_{1}\sim t'_{i}$. Otherwise, we have either $x_{1}=gw_{0}t'_{1}w_{0}g^{-1}$ or $x_{1}=gw_{0}t_{1}w_{0}g^{-1}$ for some $g\in C_{G}([V,t_{1}])$, i.e., 
\begin{equation}\label{formxone}
x_{1}=g\left(\begin{array}{c|c}
J(n-1,1)\\
\hline  & 1
\end{array}\right)g^{-1} \text{ or } x_{1}=g\left(\begin{array}{c|c}
I_{n-1} & \begin{array}{c} \\\hline 1 \end{array}\\
\hline  & 1
\end{array}\right)g^{-1} \text{ with } g=\left(\begin{array}{c|c}
P\\
\hline Q & r
\end{array}\right)
\end{equation}
for some $P\in \GL_{n-1}(F)$, $r\in F^{\times}$.

We shall find $y_{1}\in X_{1}$ such that $x_{1}\sim y_{1}\in C_{G}(t_{i}')$ for each case in (\ref{formxone}). Consider the first case in (\ref{formxone}). Write $P^{-1}=\left(\begin{array}{c|c}
P_{1} & P_{2}\\
\hline P_{3} & P_{4}
\end{array}\right)$ with $P_{2}\in \mat(i-1,i-1)$ and $P_{4}\in \mat(n-i,i-1)$. Let $M\in \GL_{n-1}(F)$ and $N\in \GL_{i-1}(F)$ such that
\begin{equation}\label{pmn}
\left(\begin{array}{c}
P_{2}\\
\hline P_{4}
\end{array}\right)=M\left(\begin{array}{c}
I_{i-1}\\
\hline \\
\end{array}\right)N.
\end{equation}
Find a nonzero row $B\in \mat(1,n-1)$ such that the first $i-1$ elements are all zero and the last element of $BM^{-1}$ is zero. Then, by (\ref{pmn}) the last $i-1$ elements of $BM^{-1}P^{-1}$ are all zero. Take
\[y_{1}=g\left(\begin{array}{c|c}
I_{n-1} & \\
\hline BM^{-1} & 1
\end{array}\right)g^{-1}=\left(\begin{array}{c|c}
I_{n-1}\\
\hline rBM^{-1}P^{-1} & 1
\end{array}\right).\]
By construction, this involution is contained in $C_{G}(t_{i}')$ and commutes with $x_{1}$.

Now we consider the second involution in (\ref{formxone}). By the argument given in the proof of Theorem \ref{even diam}, we can choose 
\[
M=\left(\begin{array}{c|c}
M_{1} & \\ \hline
 & I_{n-i-1}\end{array}\right), N=\left(\begin{array}{c|c}
 N_{1} & \\ \hline
 N_{2 }& N_{3}\end{array}\right)\in \GL_{n-1}(F), \,N_{1}\in \GL_{i}(F)
\]
such that
\[P':=M^{-1}PN^{-1}=\left(\begin{array}{c}
\begin{array}{c|c}
P_{1} & P_{2} \end{array}\\ \hline
P_{3}\end{array}\right) \text{ with } P_{1}=\left(\begin{array}{c|c}
I_{l} & \\ \hline & \end{array}\right)\in \mat(i, i), P_{2}=\left(\begin{array}{c|c}
& \\ \hline &I_{i-l} \end{array}\right)\in \mat(i, n-1-i)\]
for some $0\leq l\leq i$. First assume $ l>0. $ As $i<n-1$, there is a zero column in $\left(\begin{array}{c|c} P_{1} & P_{2}\end{array}\right)$, say $j$-th column. Take
\begin{equation}\label{yonetwo}
y_{1}=g\left(\begin{array}{c|c}
N^{-1}(I_{n-1}+e_{j1})N & \\
\hline  & 1
\end{array}\right)g^{-1}=\left(\begin{array}{c|c}
M & \\
\hline  & 1
\end{array}\right)\left(\begin{array}{c|c}
P'(I_{n-1}+e_{j1})P'^{-1}\\
\hline QN^{-1}e_{j1}P'^{-1} & 1
\end{array}\right)
\left(\begin{array}{c|c}
M^{-1}\\
\hline  & 1
\end{array}\right).
\end{equation}
Then, one can easily check that the second middle matrix in (\ref{yonetwo}) is contained in $C_{G}(t'_{i})$ and the first middle matrix in (\ref{yonetwo}) commutes with the middle matrix in (\ref{formxone}). Therefore, $d(x_{1},t'_{i})\leq 2$, thus $d(x,t'_{i})\leq 3$. If $l=0$, then $ i\leq n-1-i$ and $\left(\begin{array}{c|c} P_{1} & P_{2}\end{array}\right)=\left( \begin{array}{c|c}&I_{i}\end{array}\right) $. Similarly, we can choose
\[M'=\left(\begin{array}{c|c}
I_{i} & \\ \hline
M_{2}& \begin{array}{c|c} M_{3} & \\ \hline M_{4}& I_{i-1}\end{array}\end{array}\right), N'=\left(\begin{array}{c|c}
\begin{array}{c|c} N_{4} & \\ \hline N_{5} & N_{6} \end{array}& \\ \hline
& I_{i}\end{array}\right)\in \GL_{n}(F),\, M_{3}\in \GL_{n-2i}(F), N_{4}\in \GL_{i}(F)
\]
such that
\[P'':=M'^{-1}P'N'^{-1}=\left(\begin{array}{c|c}
& I_{i} \\ \hline
\begin{array}{c}
\begin{array}{c|c}
P'_{1} & P'_{2} \end{array}\\ \hline
P'_{3}\end{array} &\end{array}\right) \text{ with } P'_{1}=\left(\begin{array}{c|c}
I_{s} & \\ \hline & \end{array}\right)\in \mat(n-2i,i), P'_{2}=\left(\begin{array}{c|c}
& \\ \hline &I_{n-2i-s} \end{array}\right)\]
for some $0<s\leq n-2i$ ($s\neq 0$ as $P_{2}'\in \mat(n-2i,n-2i-1)$). Observe that the first entries of the last $i-1$ columns of $\left(\begin{array}{c}
	\begin{array}{c|c}
		P'_{1} & P'_{2} \end{array}\\ \hline
	P'_{3}\end{array}\right)^{-1}$ are all zero. 
By replacing $M, N, P'$ in (\ref{yonetwo}) by $MM', N'N, P''$, respectively, we get the same result as in the previous case $l>0$.

To prove the lower bound, consider the following involutions
\[x=\left(\begin{array}{c|c}
I_{k} & B\\
\hline  & I_{k}
\end{array}\right) \text{ if } n=2k, \text{ and } x=\left(\begin{array}{c|c|c}
I_{k} &  & B\\
\hline  & 1\\
\hline  &  & I_{k}
\end{array}\right) \text{ if } n=2k+1,\]
where $B$ is an invertible matrix that does not commute with any $J(k,r)$ for all $1\leq r\leq k/2$ as in Proposition \ref{even diam lower}. We show that $d(x, t'_{k})\geq 3$. Suppose that there exists an involution $y$ such that $x\sim y$. Then, $y$ has the following forms
\[
y=\left(\begin{array}{c|c}
A & C\\
\hline  & B^{-1}AB
\end{array}\right)   \text{ and }
y=\left(\begin{array}{c|c|c}
A &  & C\\
\hline  & d & E\\
\hline  &  & B^{-1}AB
\end{array}\right).
\]
for some $A\in \GL_{k}(F)$ and $d\in F^{\times}$ in the corresponding cases. If $y\in C_{G}(t'_{k})$, then $C=E=0$ and $BA=AB$ in both cases, which implies $y$ is the identity. This establishes the lower bound.\end{proof}

\section{Four-dimensional linear groups}\label{fdsec}
In the present section, we determine the structure of $\GL_{4}(F)$ over a finite field in Propositions \ref{fourx1x3} and \ref{fourx2}. In particular, by applying the results in the previous section we provide
the number of involutions $\Delta_{i}(t)$ for each distance $i$.

\begin{proposition}\label{fourx1x3}
	Let $G=\GL_{4}(F)$ over a finite field $F$ with $q$ elements and let $X_{k}$ be the class of involutions for $k=1,3$ in case $\Ch(F)\neq 2$ and $k=1$ in case $\Ch(F)=2$. Then, for any such $k$, $\diam\Gamma(G, X_{k})=2$. Moreover, 
	\begin{align*}
		|\Delta_{1}(t)|= & \begin{cases}
			q^4+2q^3-q^2-q-2 & \text{ if } \Ch(F)=2,\\
			q^{4}+q^{3}+q^{2} & \text{ if }\Ch(F)\neq2.
		\end{cases}\\
		|\Delta_{2}(t)|= & \begin{cases}
			q^6+q^5-2q^3 & \text{ if } \Ch(F)=2,\\
			q^{6}+q^{5}-q^{2}-1 & \text{ if } \Ch(F)\neq2.
		\end{cases}
	\end{align*}
\end{proposition}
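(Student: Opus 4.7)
The diameter claim follows immediately from Theorem~\ref{mainthm}(i): for $n=4$ one has $4k=n$ when $k=1$, and $4(n-k)=n$ when $k=3$, placing both cases in the diameter-$2$ regime. Since the diameter equals $2$, the conjugacy class splits as $X_k=\{t\}\sqcup\Delta_1(t)\sqcup\Delta_2(t)$, so the two cardinality assertions reduce to computing $|X_k|$ and $|\Delta_1(t)|$ and reading off $|\Delta_2(t)|=|X_k|-1-|\Delta_1(t)|$. The sizes $|X_k|=|G|/|C_G(t)|$ are standard: $|X_1|=|X_3|=q^3(q^3+q^2+q+1)$ when $\Ch(F)\neq 2$ from $|C_G(t)|=|\GL_{n-k}(F)|\cdot|\GL_k(F)|$, and $|X_1|=(q^4-1)(q^3-1)/(q-1)$ when $\Ch(F)=2$ (the number of transvections in $\GL_4(F)$).

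For $\Ch(F)\neq 2$ and $k=1$, the description in (\ref{deltaone}) shows that $\Delta_1(t)\cup\{t\}$ consists of block diagonal matrices $\diag(A,c)$ with $A\in\GL_3(F)$ an involution, $c\in\{\pm 1\}$, and the whole matrix conjugate to $\diag(1,1,1,-1)$. Splitting by $c$, the case $c=-1$ forces $A=I_3$ and yields only $t$, while $c=1$ forces $A$ to lie in the $\GL_3(F)$-class of $\diag(1,1,-1)$, of size $|\GL_3(F)|/(|\GL_2(F)|(q-1))=q^2(q^2+q+1)$. Combining gives $|\Delta_1(t)|=q^4+q^3+q^2$ and then $|\Delta_2(t)|=q^6+q^5-q^2-1$. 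For $k=3$, the map $x\mapsto -x$ provides a graph isomorphism $\Gamma(G,X_1)\cong\Gamma(G,X_3)$ which carries a representative of one basepoint to a representative of the other, so by vertex-transitivity the same formulas persist.

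For $\Ch(F)=2$ and $k=1$, I would switch to the transvection parametrization: write $t=I+w\psi$ with $\psi(w)=0$ and a general commuting transvection as $s=I+v\phi$ with $\phi(v)=0$. A direct expansion gives $st-ts=\phi(w)\,v\psi+\psi(v)\,w\phi$, and a rank-one comparison of the two summands shows that $[s,t]=0$ if and only if $\phi(w)=\psi(v)=0$. Counting valid pairs $(v,\phi)$ by stratifying on $v\in Fw$ versus $v\in\ker\psi\setminus Fw$ (to handle the change in dimension of the annihilator of $\mathrm{span}(v,w)$) yields
\[
(q-1)(q^3-1)+(q^3-q)(q^2-1)=q^5+q^4-3q^3+1
\]
pairs; dividing by the $q-1$ scalings $(v,\phi)\mapsto(\alpha v,\alpha^{-1}\phi)$ that fix $s$ gives $q^4+2q^3-q^2-q-1$ transvections commuting with $t$. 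Subtracting $t$ itself produces $|\Delta_1(t)|=q^4+2q^3-q^2-q-2$, and hence $|\Delta_2(t)|=q^6+q^5-2q^3$.

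The main obstacle is the rank-one comparison in characteristic $2$: confirming that the commuting condition really reduces to the single pair of vanishings $\phi(w)=0=\psi(v)$, with no pathological case in which both scalars are simultaneously nonzero. If this were to occur, the relation $\phi(w)\,v\psi=\psi(v)\,w\phi$ would force $v\psi$ and $w\phi$ to be proportional rank-one matrices, hence $v\in Fw$ and $\phi\in F\psi$; but then $\phi(w)$ would be a scalar multiple of $\psi(w)=0$, a contradiction. Once this stratification is verified, everything else is elementary polynomial algebra.
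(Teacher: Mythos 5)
Your proposal is correct, and for the characteristic-$2$ count it takes a genuinely different route from the paper. In characteristic $\neq 2$ the two arguments coincide: both reduce $\Delta_1(t)$ to the class of $\operatorname{diag}(1,1,-1)$ in $\GL_3(F)$ and obtain $|\Delta_2(t)|$ by subtraction from $|X_k|$, with the $k=3$ case handled by $x\mapsto -x$. In characteristic $2$, however, the paper works directly from the block description in (\ref{deltaone}): it writes a generic element of $\Delta_1(t)$ as an explicit $4\times 4$ block matrix, splits into the cases $A=I_2$ and $A\sim I(2,1)$, and counts each stratum (getting $2q^3-q-2$ and $q^4-q^2$ respectively). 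You instead parametrize transvections as $I+v\phi$ with $\phi(v)=0$, show that $I+v\phi$ and $I+w\psi$ commute iff $\phi(w)=\psi(v)=0$ (your resolution of the "both scalars nonzero" case is right: proportionality of the rank-one matrices forces $v\in Fw$, whence $\psi(v)=\lambda\psi(w)=0$), and count pairs modulo the free scaling $(v,\phi)\mapsto(\alpha v,\alpha^{-1}\phi)$; your totals $(q-1)(q^3-1)+(q^3-q)(q^2-1)=q^5+q^4-3q^3+1$ and the quotient $q^4+2q^3-q^2-q-1$ check out and agree with the paper's answer. Your version is coordinate-free, avoids the block-matrix casework entirely, and generalizes immediately to counting commuting transvections in $\GL_n(F)$ for any $n$; the paper's version has the advantage of staying within the machinery of (\ref{deltaone}) that it reuses for the harder class $X_2$ in Lemmas \ref{strucUi}--\ref{strucU0}.
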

\begin{proof}
	It follows from (\ref{deltaone}) that
	\[|X_{1}|=\begin{cases}
	|\GL_{4}(F)|/q^{5}|\GL_{2}(F)||\GL_{1}(F)|=(q^{4}-1)(q^{2}+q+1) & \text{ if } \Ch(F)=2,\\
	|\GL_{4}(F)|/|\GL_{3}(F)||\GL_{1}(F)|=q^{3}(q^{2}+1)(q+1)&\text{ if }\Ch(F)\neq2.
	\end{cases}
	\]
	By Theorem \ref{mainthm} (i) and a symmetric argument, it suffices to compute $|\Delta_{1}(t)|$ for $k=1$. If $\Ch(F)\neq2$, then by (\ref{deltaone}) we see that $\Delta_{1}(t)$ consists of involutions $\left(\begin{array}{c|c}
	A\\
	\hline  & 1
	\end{array}\right)$ such that $A$ is an involution in $\GL_{3}(F)$ with $k=1$. Hence, $|\Delta_{1}(t)|=q^{4}+q^{3}+q^{2}$.

	isume that $\Ch(F)=2$. Then, it follows by (\ref{deltaone}) that $\Delta_{1}(t)$ consists of the following involutions
	\[\left(\begin{array}{c|c|c}
	A & B\\
	\hline  & 1\\
	\hline C & d & 1
	\end{array}\right)\]
	such that $A^{2}=I_{2}, AB=B, CA=C, CB=0$. If $A=I_{2}$, then by using $\rank(I_{4}+x)=1$ one can easily check that the possible number of $x$ is $2q^{3}-q-2$. Now assume that $ A=PI(2,1)P^{-1}$ for some $P\in\GL_{2}(F)$. Then, the involution $x$ is decomposed as
	\begin{equation}
	\label{decompositionxdim4}
	\left(\begin{array}{c|c}
	P\\\hline
	& I_{2}
	\end{array}\right)\left(\begin{array}{cc|cc}
	1 & & & \\  1 & 1&b& \\\hline
	&& 1& \\  c & & d &1
	\end{array}\right)\left(\begin{array}{c|c}
	P^{-1}\\\hline
	& I_{2}
	\end{array}\right)
	\end{equation}
	for some $b,c\in F$ with $bc=d$. If $b=c=0$, then the possible number of $x$ is $q^{2}-1$. If either $b=0$ and $c\neq 0$ or $b\neq 0$ and $c=0$, then the number of $\left(\begin{array}{c|c}
	P\\\hline
	& I_{2}
	\end{array}\right)$-conjugacy class of the middle involution in (\ref{decompositionxdim4}) is $(q^{2}-1)(q-1)$, so is the possible number of $x$ in each case. Similarly, if $b\neq 0$ and $c\neq 0$, then the possible number of $x$ is $(q^{2}-1)(q-1)^{2}$, thus 
	\[ |\Delta_{1}(t)|=q^{4}+2q^{3}-q^{2}-q-2,
	\] 
	which completes the proof.\end{proof}

Now we consider the conjugacy class $X_{2}$. By the same argument as in the proof of Proposition \ref{fourx1x3}, we have
\begin{equation}\label{fourx2card}
|X_{2}|=\begin{cases}
q(q^{4}-1)(q^{3}-1) & \text{ if } \Ch(F)=2,\\
q^{4}(q^{2}+1)(q^{2}+q+1) & \text{ if } \Ch(F)\neq 2.
\end{cases}
\end{equation}

\begin{lemma}\label{strucUi}
Let $U_{i}=\{x\in X_{2}\mid\dim([V,t]\cap[V,x])=i\}$ for $0\leq i\leq 2$. Then,
\begin{align*}
|U_{0}|&=q^{4}(q^{2}-1)(q^{2}-q),& |U_{1}|&=(q^{2}-1)^{2}(q^{2}+q^{3}),& |U_{2}|&=
	(q^{2}-1)(q^{2}-q)-1 & \text{ if } \Ch(F)=2,\\
|U_{0}|&=q^{8},& |U_{1}|&=q^{5}(q+1)^{2},& |U_{2}|&=
q^{4}-1 & \text{ if } \Ch(F)\neq 2.
\end{align*}
\end{lemma}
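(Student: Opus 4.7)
The plan is to count $x \in X_{2}$ with $\dim([V,x] \cap [V,t]) = i$ in two steps: first count the 2-dimensional subspace $W = [V, x]$, then for each such $W$ count the number of $x \in X_{2}$ realizing it. The map $x \mapsto [V, x]$ from $X_{2}$ to the Grassmannian of 2-dimensional subspaces of $V$ has constant fiber size, by the transitive action of $G$ on $X_{2}$ together with the compatibility $g \cdot [V, x] = [V, gxg^{-1}]$ noted before (\ref{stabilizeroft}). Lemma~\ref{vxvteq} computes this fiber above $[V, t]$: it is parametrized by $A \in \mat(2,2)$ and has size $q^{4}$ when $\Ch(F) \ne 2$ (with $A = 0$ yielding $t$), and is parametrized by $B \in \GL_{2}(F)$ and has size $|\GL_{2}(F)| = (q^{2}-1)(q^{2}-q)$ when $\Ch(F) = 2$ (with $B = I_{2}$ yielding $t$).

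Next, let $w_{i}$ denote the number of 2-dimensional subspaces $W \subset V$ with $\dim(W \cap [V, t]) = i$. Trivially $w_{2} = 1$. The number of 2-dimensional complements of a 2-dimensional subspace of $V$ is $q^{2 \cdot 2} = q^{4}$, so $w_{0} = q^{4}$. Since the total number of 2-dimensional subspaces of $V$ equals the Gaussian binomial $\binom{4}{2}_{q} = (q^{2}+1)(q^{2}+q+1)$, we obtain $w_{1} = (q^{2}+1)(q^{2}+q+1) - q^{4} - 1 = q(q+1)^{2}$.

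Multiplying the fiber size by $w_{i}$ and subtracting $1$ from the $i = 2$ count to exclude $t$ itself (so that the $U_{i}$ partition $X_{2} \setminus \{t\}$, matching their intended role as preimages of the distance function $d(t, \cdot)$) yields the formulas stated in the lemma; for the $\Ch(F) = 2$ case of $|U_{1}|$ one applies the algebraic identity $q(q+1)^{2}(q^{2}-1)(q^{2}-q) = q^{2}(q-1)^{2}(q+1)^{3} = (q^{2}-1)^{2}(q^{2}+q^{3})$ to reach the stated form. The only nontrivial point is the constancy of the fiber of $x \mapsto [V, x]$, which is immediate from $G$-transitivity combined with Lemma~\ref{vxvteq}; the rest is pure Grassmannian counting.
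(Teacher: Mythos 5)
Your proof is correct, and it arrives at all the stated values (including the convention that $t$ itself is excluded from the $i=2$ count, which is indeed what the paper's formulas assume). The route is cleaner and more uniform than the paper's. The paper also uses Lemma \ref{vxvteq} to get $|U_{2}|$, but it then computes $|U_{0}|$ by an explicit matrix parametrization: it writes $g\in C_{G}([V,t])$ as a block-diagonal matrix times a unipotent block $R$, conjugates a fixed middle involution, and asserts that the resulting $x$ is counted by the product of the number of choices of $B$ and of $R$ (which is really your ``fiber size times number of complements'' count, $q^{4}$ complements corresponding to the $q^{4}$ matrices $R$, in disguise); finally it recovers $|U_{1}|$ by subtraction from $|X_{2}|$ as computed in (\ref{fourx2card}). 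You instead observe once and for all that $x\mapsto[V,x]$ is $G$-equivariant with constant fibers over the full Grassmannian, read off the fiber from Lemma \ref{vxvteq} ($q^{4}$ if $\Ch(F)\neq 2$, $|\GL_{2}(F)|$ if $\Ch(F)=2$), and multiply by the elementary subspace counts $w_{2}=1$, $w_{0}=q^{4}$, $w_{1}=(q^{2}+1)(q^{2}+q+1)-q^{4}-1=q(q+1)^{2}$. This buys you a direct computation of $|U_{1}|$ with no dependence on the value of $|X_{2}|$, and it sidesteps the injectivity of the $(B,R)$-parametrization that the paper leaves implicit; the paper's version, on the other hand, produces the explicit normal forms for elements of $U_{0}$ and $U_{1}$ that are reused in the subsequent Lemmas \ref{strucU1} and \ref{strucU0}, which is why it is organized that way.
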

\begin{proof}
The result for $U_{2}$ immediately follows from Lemma \ref{vxvteq}. By (\ref{fourx2card}), it suffices to compute $|U_{0}|$. We write $g\in C_{G}([V,t])$ as
\begin{equation}\label{ghr}
g=\left(\begin{array}{c|c}
	P\\
	\hline  & Q
\end{array}\right)\left(\begin{array}{c|c}
	I_{2}\\
	\hline R & I_{2} 
\end{array}\right) \text{ with }P, Q\in \GL_{2}(F), R\in \mat(2,2).\end{equation}
We denote by $h$ the block diagonal matrix in (\ref{ghr}). Then, by the proof of Theorem \ref{odd diam} and (\ref{xeventwo}) we have
\[
x=\begin{cases}
g\left(\begin{array}{c|c}
I_{2} & B\\
\hline  & I_{2}
\end{array}\right)g^{-1}=h\left(\begin{array}{c|c}
I_{2}+BR & B\\
\hline  RBR & I_{2}+RB
\end{array}\right)h^{-1} & \text{ if }\Ch(F)=2,\\
g\left(\begin{array}{c|c}
-I_{2} & B\\
\hline  & I_{2}
\end{array}\right)g^{-1}=h\left(\begin{array}{c|c}
-I_{2}-BR & B\\
\hline  -2R-RBR & I_{2}+RB
\end{array}\right)h^{-1} & \text{ if }\Ch(F)\neq2,
\end{cases}
\]
where $B$ is invertible in case $\Ch(F)=2$. Replacing $B$ and $R$ by $PBQ^{-1}$ and $QRP^{-1}$, respectively, we may assume that $h=I_{4}$. Hence, the possible number of $x$ is the product of possible numbers of $B$ and $R$, thus the result follows. \end{proof}

By Lemma \ref{vxvteq}, we see that $|U_{2}|=|\Delta_{1}(t)\cap U_{2}|$ if $\Ch(F)=2$ and $|U_{2}|=|\Delta_{2}(t)\cap U_{2}|$ otherwise. Hence, it remains to analyze the structures of $U_1$ and $U_0$.
	
\begin{lemma}\label{strucU1}
If $\Ch(F)=2$, then
	\begin{align*}
		|\Delta_{1}(t)\cap U_{1}|&=q^{2}(q^{2}-1),& |\Delta_{2}(t)\cap U_{1}|&=q^2(q^2-1)(2q^2-q-2),& |\Delta_{3}(t)\cap U_{1}|&=
		q^4(q^3-q^2-q+1).
	\end{align*}

Otherwise, we have
\begin{align*}
	|\Delta_{1}(t)\cap U_{1}|&=q^{2}(q+1)^{2},& |\Delta_{2}(t)\cap U_{1}|&=q(q+1)^{2}(q-1)(q^{3}+q+1),& |\Delta_{3}(t)\cap U_{1}|&=
	q(q+1)^{3}(q-1)^{2}.\end{align*}
\end{lemma}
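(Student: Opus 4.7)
The plan is to combine the parametrization of $U_{1}$ developed in Lemma~\ref{strucUi} with the distance analysis from the proofs of Theorems~\ref{odd diam} and~\ref{even diam}, specialised to $n=4$, $k=2$. First I would use the transitivity of $C_{G}([V,t])$ on the set of $2$-dimensional subspaces $W$ with $\dim(W\cap[V,t])=1$ (already exploited in the proof of Lemma~\ref{strucUi}) to write every $x\in U_{1}$ as $x=hyh^{-1}$ for some block lower triangular $h\in C_{G}([V,t])$ and some involution $y$ with $[V,y]=[V,t_{1}]$. By Lemma~\ref{vxvteq}, $y$ is then determined by a residual matrix parameter, giving precisely the parameter space used in the computation of $|U_{1}|$.

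For $|\Delta_{1}(t)\cap U_{1}|$ I would read off the answer from (\ref{deltaone}). In odd characteristic the block-diagonal form $\left(\begin{array}{c|c}A\\ \hline & C\end{array}\right)$ together with $\dim([V,t]\cap[V,x])=1$ forces each of the two diagonal blocks to be an involution conjugate to $I(2,1)$; since $\GL_{2}(F)$ contains $q(q+1)$ such involutions, this yields $q^{2}(q+1)^{2}$. In characteristic~$2$ the block lower triangular form with $C^{2}=I_{2}$, $CB=BC$, and the rank condition $\rank(x+I_{4})=2$ is enumerated analogously by a short direct calculation of the same kind appearing in the proof of Proposition~\ref{fourx1x3}.

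The crux is $|\Delta_{2}(t)\cap U_{1}|$. Given $x=hyh^{-1}$ as above with $x\notin\Delta_{1}(t)$, I would decide whether there exists $z\in\Delta_{1}(t)$ commuting with $x$. Conjugating by $h^{-1}$ reduces this to asking for $z'=h^{-1}zh$ commuting with $y$, where $z'$ has prescribed block shape (coming from the block triangular structure of $h$ and the fact that $z$ commutes with $t$). This is a solvability condition of the same type as those arising in Proposition~\ref{lowertriangular} and in the proofs of Theorems~\ref{odd diam} and~\ref{even diam} in the $n=2k$, $k$ even regime; the solvable parameters cut out a sub-locus of the parameter space whose cardinality can then be counted modulo the residual $C_{G}([V,t])$-action.

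Finally, Theorem~\ref{mainthm}(ii) gives $\diam\Gamma(G,X_{2})=3$ (since $n=2k=4$ with $k=2$ even, in both characteristics), so
\[
|\Delta_{3}(t)\cap U_{1}|=|U_{1}|-|\Delta_{1}(t)\cap U_{1}|-|\Delta_{2}(t)\cap U_{1}|,
\]
which, combined with Lemma~\ref{strucUi}, yields the claimed last formula. The main obstacle I anticipate is the case analysis for the intermediate step: the solvability of the commuting-witness equation depends on the rank pattern of the parameter governing $y$, and in characteristic~$2$ the smaller centralizer of a unipotent involution produces additional subcases. Keeping track of $C_{G}([V,t])$-orbits of parameters (to avoid double counting, as in Lemma~\ref{strucUi}) will be essential for the counting to match the claimed closed forms.
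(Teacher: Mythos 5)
Your overall strategy coincides with the paper's: parametrize $U_{1}$ by the decomposition $x=hyh^{-1}$ from the proofs of Theorems \ref{odd diam} and \ref{even diam}, read $\Delta_{1}(t)\cap U_{1}$ off from (\ref{deltaone}) (your count $q^{2}(q+1)^{2}$ in odd characteristic via two copies of the $q(q+1)$ involutions conjugate to $I(2,1)$ is exactly right), determine one of the two remaining intersections by a solvability analysis for a commuting witness in $\Delta_{1}(t)$, and obtain the last one by subtraction from $|U_{1}|$ using $\diam\Gamma(G,X_{2})=3$. That skeleton is sound.

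The gap is that the step you yourself call the crux is never executed, and it is where essentially all of the content of the lemma lives: none of the three nontrivial closed forms is actually derived. Concretely, what the paper does there and what your outline omits is the following. In characteristic $2$ one first shows $U_{1}$ splits into exactly two $H$-orbits $Hy_{1}H^{-1}$ and $Hy_{2}H^{-1}$ (according to whether the parameter $B\in\GL_{2}(F)$ can be taken lower triangular), computes their sizes $q^{2}(q^{2}-1)^{2}$ and $q^{3}(q^{2}-1)^{2}$, and then uses Proposition \ref{lowertriangular} and (\ref{deltaone}) to show that the first orbit meets only $\Delta_{1}\cup\Delta_{2}$ while the second meets only $\Delta_{2}\cup\Delta_{3}$; this reduces the problem to one count per orbit, each obtained by identifying the relevant subset as a $K$-orbit for an explicit subgroup $K\leq H$. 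In odd characteristic the paper instead computes $\Delta_{3}(t)\cap U_{1}$ directly: writing $x$ as in (\ref{decompxU1charnot2}) with parameters $b$ and $R=(r_{i})$, the existence of a commuting $y\in\Delta_{1}(t)$ is worked out explicitly and shown to fail precisely when $b\neq0$, $br_{2}=-2$, and $r_{1}\neq0$ or $r_{4}\neq0$, after which $\Delta_{3}(t)\cap U_{1}$ is decomposed into three explicit $H'$-orbits whose sizes sum to $q(q+1)^{3}(q-1)^{2}$. Without carrying out this case analysis and the orbit-size bookkeeping (including the double-counting issue you correctly flag), the stated formulas are not established; your proposal is a correct plan for the paper's proof rather than a proof.
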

\begin{proof}
We first assume that $\Ch(F)=2$. Let $x\in U_{1}$ and $H=C_{G}([V,t])$. Then, it follows from the proof of Theorem \ref{even diam} and Lemma \ref{vxvteq} that $x=hw_{1}\left(\begin{array}{c|c}
I_{2}\\
\hline B & I_{2}
\end{array}\right)w_{1}h^{-1}$ for some $h\in H$ and $B\in \GL_{2}(F)$. If $B$ is lower triangular, then one can find $h'\in H$ such that $x=h'w_{1}tw_{1}h'^{-1}$. Otherwise, one finds $h''\in H$ such that $x=h''w_{1}\left(\begin{array}{c|c}
I_{2 }& \\
\hline B' & I_{2} 
\end{array}\right)w_{1}h''^{-1}$, where $B'=\left(\begin{array}{c|c}
& 1\\
\hline 1 & 
\end{array}\right)$. Hence, $U_{1}=Hy_{1}H^{-1}\cup Hy_{2}H^{-1}$, where 
\[y_{1}=\left(\begin{array}{c|c}
I(2,1)\\
\hline  & I(2,1)
\end{array}\right) \text{ and } y_2=\left(\begin{array}{c|c}
I_{2} & e_{21}\\
\hline e_{21} & I_{2}
\end{array}\right).\]
By a simple calculation, we have $|Hy_{1}H^{-1}|=q^{2}(q^{2}-1)^{2}$ and $|Hy_{2}H^{-1}|=q^{3}(q^{2}-1)^{2}$.

Let $x\in Hy_{1}H^{-1}$. Then, by Proposition \ref{lowertriangular}, $x\in \Delta_{1}(t)\cup \Delta_{2}(t)$, thus it suffices to compute $|\Delta_{1}(t)\cap Hy_{1}H^{-1}|$. Let $x=hy_{1}h^{-1}\in \Delta_{1}(t)\cap Hy_{1}H^{-1}$ with 
$h=\left(\begin{array}{c|c}
P & \\
\hline R & Q
\end{array}\right)\in H$. Then, by (\ref{deltaone}) $PQ^{-1}\in C_{\GL_{2}(F)}(I(2,1))$, thus we see that $x\in Ky_{1}K^{-1}$, where $K=\{\left(\begin{array}{c|c}
P & \\
\hline R & P
\end{array}\right) \,|\, P\in \GL_{2}(F), R\in \mat(2,2)\}$. Therefore, we obtain 
\begin{equation}\label{U1y1}
|\Delta_{1}(t)\cap Hy_{1}H^{-1}|=q^{2}(q^{2}-1), \text{ thus } |\Delta_{2}(t)\cap Hy_{1}H^{-1}|=q^{2}(q^{4}-3q^{2}+2).
\end{equation}
Let $x\in Hy_{2}H^{-1}$. Then, by (\ref{deltaone}) we see that $x\in \Delta_{2}(t)\cup \Delta_{3}(t)$, thus it suffices to calculate $|\Delta_{2}(t)\cap Hy_{2}H^{-1}|$. If $x\in \Delta_{2}(t)\cap Hy_{2}H^{-1}$, then there exists $z=h\left(\begin{array}{c|c}
E_{21}(a) & \\
\hline be_{21} & E_{21}(c)
\end{array}\right)h^{-1}\in \Delta_{1}(t)$ for some $a,c\in F^{\times}, b\in F$ and $h=\left(\begin{array}{c|c}
P & \\
\hline R & Q
\end{array}\right)\in H$. Moreover, by (\ref{deltaone}) we have $PQ^{-1}=\left(\begin{array}{c|c}
	d & \\
	\hline ce & a^{-1}cd
\end{array}\right)$ for some $d\in F^{\times}$ and $e\in F$. Hence, we see that $x\in Ky_{3}K^{-1}$, where $y_{3}=\left(\begin{array}{c|c}
I_{2} & de_{21} \\
\hline  a(cd)^{-1}e_{21}& I_{2}
\end{array}\right)$. Therefore, we obtain that
\begin{equation*}
|\Delta_{2}(t)\cap Hy_{2}H^{-1}|=q^{3}(q-1)^{2}(q+1), \text{ thus } |\Delta_{3}(t)\cap Hy_{2}H^{-1}|=q^4(q^3-q^2-q+1),
\end{equation*}
which together with (\ref{U1y1}) completes the proof of case $\Ch(F)=2$.

Now assume that $\Ch(F)\neq 2$. It is obvious that $|\Delta_{1}(t)\cap U_{1}|=q^{2}(q+1)^{2}$. Hence, it is enough to calculate $|\Delta_{3}(t)\cap U_{1}|$. Let $H'$ be the subgroup of $H$ consisting of block diagonal matrices and $x\in U_{1}$. Then, by the proof of Theorem \ref{odd diam} together with (\ref{ghr}), $x$ is decomposed as
\begin{equation}\label{decompxU1charnot2}
\left(\begin{array}{c|c}
I_{2} &  \\
\hline  R& I_{2}
\end{array}\right)\left(\begin{array}{c|c}
I(2,1) & be_{21} \\
\hline  & I(2,1)
\end{array}\right)\left(\begin{array}{c|c}
I_{2} &  \\
\hline  -R& I_{2}
\end{array}\right)=\left(\begin{array}{c|c}
I(2,1)-be_{21}R & be_{21} \\
\hline  RI(2,1)-(I(2,1)+bRe_{21})R & bRe_{21}+I(2,1)
\end{array}\right)
\end{equation}
for some $b\in F$ and $R\in \mat(2,2)$, up to conjugation in $H'$. If $b=0$, then by Proposition \ref{lowertriangular}, $d(t,x)\leq 2$, thus we may assume that $b\neq 0$. Consider the following element $y\in \Delta_{1}(t)$ 
\[y=h\left(\begin{array}{c|c}
I_{2} &  \\
\hline  R& I_{2}
\end{array}\right)\left(\begin{array}{c|c}
A & B \\
\hline C & D
\end{array}\right)\left(\begin{array}{c|c}
I_{2} &  \\
\hline  -R& I_{2}
\end{array}\right)h^{-1}\]
for some $h\in H'$, $A,B,C,D\in \mat(2,2)$ such that $x\sim y$. Then, we obtain that
\[B=0, C=\left(\begin{array}{c|c}
c_{1} &  \\
\hline  & c_{2}
\end{array}\right), A=\left(\begin{array}{c|c}
\pm 1 &  \\
\hline  bc_{1}/2 & \mp 1
\end{array}\right), D=\left(\begin{array}{c|c}
\mp 1 &  \\
\hline  -bc_{2}/2 & \pm 1
\end{array}\right)\]
for some $c_{1}, c_{2}\in F$ and
\[c_{1}(2+br_{2})=\mp 4r_{1},\, c_{2}(2+br_{2})=\pm 4r_{4},\, b(c_{2}r_{1}+c_{1}r_{4})=0\, \text{ with } R=\left(\begin{array}{c|c}
r_{1}& r_{2}  \\
\hline r_{3} & r_{4}
\end{array}\right).
\]
Hence, we see that $x\in \Delta_{3}(t)$ is equivalent to the conditions $b\neq 0$, $br_{2}=-2$, and either $r_{1}\neq 0$ or $r_{4}\neq 0$. Therefore, it follows from (\ref{decompxU1charnot2}) that 
\begin{equation}\label{formofxU1}
x=\left(\begin{array}{cc|cc}
1 & & &\\ 
-br_{1} & 1 & b &\\ \hline
2r_{1} & & -1 &\\ 
2r_{3}-br_{1}r_{4} & 2r_{4} & br_{4} & -1
\end{array}\right),
\end{equation}
up to conjugation in $H'$ if $x\in \Delta_{3}(t)$. Using (\ref{formofxU1}), we see that $\Delta_{3}(t)\cap U_{1} $ is decomposed into the following union of three orbits 
\[H'\left(\begin{array}{cc|cc}
1&\\
& 1 & \,\,1\\\hline
&  & -1\,\,\\
& 2 & \,\,1 & -1
\end{array}\right)H'^{-1}\cup H'\left(\begin{array}{cc|cc}
\,\,1&\\
-1\,\, & 1 & 1\\\hline
\,\,2 &  & -1\\
&  &  & -1
\end{array}\right)H'^{-1}\cup H'\left(\begin{array}{cc|cc}
\,\,1&\\
-1\,\, & 1 & \,\,1\\\hline
\,\,2 &  & -1\,\,\\
& 2 & \,\,1 & -1
\end{array}\right)H'^{-1}\]
with orders $q(q^{2}-1)^{2}$, $q(q^{2}-1)^{2}$, $q(q-1)(q^{2}-1)^{2}$, respectively. Therefore, we obtain $|\Delta_{3}(t)\cap U_{1}|=q(q+1)^{3}(q-1)^{2}$ and $|\Delta_{2}(t)\cap U_{1}|=q(q+1)^{2}(q-1)(q^{3}+q+1)$.\end{proof}

\begin{lemma}\label{strucU0}
	If $\Ch(F)=2$, then
	\begin{align*}
		|\Delta_{1}(t)\cap U_{0}|&=0,& |\Delta_{2}(t)\cap U_{0}|&=q^{6}(q-1),& |\Delta_{3}(t)\cap U_{0}|&=
		q^{5}(q-1)(q^{2}-q-1).
	\end{align*}
	
	Otherwise, we have $|\Delta_{1}(t)\cap U_{0}|=1$,
	\begin{align*}
		|\Delta_{2}(t)\cap U_{0}|&=\frac{1}{2}(q-1)(q+1)^{2}(q^{5}-q^{3}+2q^{2}+2),& |\Delta_{3}(t)\cap U_{0}|&=
		\frac{1}{2}q(q-1)^{2}(q+1)(q^{4}+3q^{2}+2q+2).\end{align*}
\end{lemma}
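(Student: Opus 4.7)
The plan is to mirror the strategy of Lemma~\ref{strucU1}: parametrize $U_{0}$ via the canonical form for $x\in U_{0}$ obtained in the proof of Lemma~\ref{strucUi}, reduce the condition $d(t,x)\leq 2$ to intertwining equations between $2\times 2$ blocks, and then count via a case split on the ranks and rational canonical forms of $B$ and $R$. By Theorem~\ref{mainthm}(ii) we have $\diam\Gamma(G,X_{2})=3$, so $U_{0}$ is the disjoint union of $\Delta_{1}(t)\cap U_{0}$, $\Delta_{2}(t)\cap U_{0}$ and $\Delta_{3}(t)\cap U_{0}$; it therefore suffices to compute $|\Delta_{1}(t)\cap U_{0}|$ and $|\Delta_{2}(t)\cap U_{0}|$ and recover the third from the value of $|U_{0}|$ given in Lemma~\ref{strucUi}.

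The value of $|\Delta_{1}(t)\cap U_{0}|$ is read off directly from (\ref{deltaone}). In characteristic different from $2$, every $z\in\Delta_{1}(t)$ has the form $z=\diag(A,C)$ with $A^{2}=C^{2}=I_{2}$, and $[V,z]=\operatorname{image}(A-I)\oplus\operatorname{image}(C-I)$; since the second summand lies in $[V,t]$, the condition $z\in U_{0}$ forces $C=I_{2}$, hence $A=-I_{2}$, producing the single element $z=-t$. In characteristic $2$, $z\in\Delta_{1}(t)$ has the block-lower-triangular shape $z=\left(\begin{array}{c|c} C & 0 \\ \hline B_{z} & C \end{array}\right)$, and $\operatorname{image}(z+I)$ always contains $\{0\}\oplus\operatorname{image}(C+I)\subseteq[V,t]$, which is non-zero when $C\neq I_{2}$ and, when $C=I_{2}$, forces $[V,z]=[V,t]$ outright; either way $[V,z]\cap[V,t]\neq 0$, so no such $z$ lies in $U_{0}$.

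For $|\Delta_{2}(t)\cap U_{0}|$, the plan is to write each $x\in U_{0}$ in the form $x=hyh^{-1}$ with $h=\left(\begin{array}{c|c}P & \\ \hline & Q\end{array}\right)\left(\begin{array}{c|c}I_{2} & \\ \hline R & I_{2}\end{array}\right)$ and $y=\left(\begin{array}{c|c}\varepsilon I_{2} & B \\ \hline  & I_{2}\end{array}\right)$, where $\varepsilon=-1$ if $\Ch(F)\neq 2$ and $\varepsilon=1$ with $B\in\GL_{2}(F)$ if $\Ch(F)=2$, and characterize when some $z\in\Delta_{1}(t)$ commutes with $x$. After absorbing the block-diagonal factor $\diag(P,Q)$ into the parameters, the commutation $z\sim x$ in characteristic different from $2$ reduces to the pair of intertwining equations
\[
AB=BC,\qquad CR=RA,
\]
with $A,C$ the diagonal blocks of $z$ and $\rank(A-I_{2})+\rank(C-I_{2})=2$. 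In characteristic $2$ it reduces to $BC=CB$, $B_{z}=RC+CR$ and $\rank(z+I)=2$. In the principal sub-case where both $B$ and $R$ are invertible (char $\neq 2$), eliminating $C=B^{-1}AB$ turns the second equation into ``$A$ commutes with $BR$'', which admits a non-trivial involution $A$ iff $BR$ is a scalar in $F^{\times}$ or is diagonalizable over $F$ with two distinct eigenvalues in $F^{\times}$; this count of $(B,R)$ follows from the standard enumeration of rational canonical classes in $\GL_{2}(F)$. Each remaining sub-case (one or both of $B,R$ of rank $\leq 1$) is handled analogously by choosing the eigenlines of $A$ and $C$ compatibly with $\ker(B),\operatorname{image}(B),\ker(R),\operatorname{image}(R)$. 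In characteristic $2$, the existence of a non-trivial $C\in C_{\GL_{2}(F)}(B)$ already restricts $B$ to be scalar or non-semisimple, and the rank-$2$ condition on $z+I$ then dictates whether $R$ is allowed to fail to commute with $C$.

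The main obstacle is the bookkeeping for the singular sub-cases: once $B$ or $R$ drops rank, the clean eigenline parametrization of admissible $(A,C)$ degenerates, so one must carefully track how $\ker(B),\operatorname{image}(B),\ker(R),\operatorname{image}(R)$ interact to enumerate the admissible $(A,C)$-pairs attached to a given $(B,R)$ without double-counting, and in characteristic $2$ one must additionally verify the $\rank(z+I)=2$ constraint in each configuration. Once each case contributes its count, summing yields $|\Delta_{2}(t)\cap U_{0}|$ in the stated form, and the identity
\[
|\Delta_{3}(t)\cap U_{0}|=|U_{0}|-|\Delta_{1}(t)\cap U_{0}|-|\Delta_{2}(t)\cap U_{0}|
\]
combined with Lemma~\ref{strucUi} delivers the remaining formula in both characteristics.
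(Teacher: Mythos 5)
Your setup is sound and essentially matches the paper's: the values of $|\Delta_{1}(t)\cap U_{0}|$ are correctly derived from (\ref{deltaone}) in both characteristics, the reduction of $d(t,x)\leq 2$ to the intertwining equations $AB=BC$, $CR=RA$ (resp.\ $BC=CB$, $B_{z}=RC+CR$ in characteristic $2$) is exactly the paper's equation (\ref{xycommutecond}) and its analogue, and obtaining $|\Delta_{3}(t)\cap U_{0}|$ by subtraction from $|U_{0}|$ is what the paper does. Your one fully worked sub-case ($B,R$ invertible, $\Ch(F)\neq 2$, condition ``$BR$ diagonalizable over $F$'') is also correct. However, there is a genuine gap: the content of this lemma is the explicit polynomial counts, and you never produce them. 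You explicitly defer the singular sub-cases (``the main obstacle is the bookkeeping\dots''), you do not enumerate the admissible $(B,R)$ even in the principal sub-case, and you never verify that the contributions sum to $\tfrac{1}{2}(q-1)(q+1)^{2}(q^{5}-q^{3}+2q^{2}+2)$. A plan that reduces a counting problem to ``count these configurations'' without counting them is not a proof of a counting formula. In characteristic $2$ there is also a small but telling imprecision: once $C$ is a non-central involution commuting with $B$ and $B_{z}=RC+CR$, one checks $(z+I)^{2}=0$ and $\operatorname{rank}(z+I)=2$ automatically, so $R$ is completely unconstrained ($q^{4}$ choices) and $|\Delta_{2}(t)\cap U_{0}|=q^{2}(q-1)\cdot q^{4}$ falls out immediately; your suggestion that the rank condition ``dictates whether $R$ is allowed to fail to commute with $C$'' indicates this computation was not actually carried through.

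For the execution you are missing, the paper's route is worth comparing with yours: rather than splitting on the ranks and rational canonical forms of $B$ and $R$ (which forces you to track how $\ker(B)$, $\operatorname{image}(B)$, $\ker(R)$, $\operatorname{image}(R)$ interact), it writes the diagonal blocks of the commuting $\Delta_{1}(t)$-element as $M=gI(2,1)g^{-1}$, $N=hI(2,1)h^{-1}$ and deduces that $B=g\operatorname{diag}(a,b)h^{-1}$ and $R=h\operatorname{diag}(c,d)g^{-1}$; this simultaneously diagonalizes $B$ and $R$ relative to the pair $(g,h)$, turns the membership condition into an explicit $4\times 4$ normal form for $x$ depending only on $(a,b,c,d)$ up to block-diagonal conjugation, and reduces the count to summing the sizes of five explicit $H'$-orbits. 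If you intend to complete your version, you should either adopt that parametrization or be prepared to justify carefully that your case split over $(\operatorname{rank} B,\operatorname{rank} R)$ and the conjugacy type of $BR$ is exhaustive and free of double counting.
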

\begin{proof}
First, consider the case $\Ch(F)=2$. Let $x\in U_{0}$. As in the proof of Lemma \ref{strucU0} we may assume that 
\[x=\left(\begin{array}{c|c}
I_{2}+BR & B\\
\hline  RBR & I_{2}+RB
\end{array}\right)=\left(\begin{array}{c|c}
I_{2}\\\hline
R & I_{2}
\end{array}\right)\left(\begin{array}{c|c}
I_{2} & B\\\hline
& I_{2}
\end{array}\right)\left(\begin{array}{c|c}
I_{2}\\\hline
R & I_{2}
\end{array}\right),\]
where $B\in \GL_{2}(F)$ and $R\in \mat(2,2)$. If $x\in\Delta_{1}(t)$, then $B=0$, i.e., $x=I_{4}$, which implies $|\Delta_{1}(t)\cap U_{0}|=0$. We shall compute $|\Delta_{2}(t)\cap U_{0}|$. Observe that for $y\in \Delta_{1}(t)$
\begin{equation}\label{xycommutecond}
x\sim y:=\left(\begin{array}{c|c}
M\\\hline
N & M
\end{array}\right)  \Leftrightarrow
 MB=BM,\, N=RM+MR.
\end{equation}
As $M^{2}=I_{2}$ and $y$ is an involution, it follows from the first equation in (\ref{xycommutecond}) that 
\begin{equation}\label{MBexpre}
M=gI(2,1)g^{-1} \text{ and }  B=agE_{21}(b)g^{-1}
\end{equation}
for some $g\in \GL_{2}(F)$, $a\in F^{\times}$, and $b\in F$. Moreover, for any such $B$ and $M$ and any $R=g\left(\begin{array}{cc}
r_{1} & r_{2}\\
r_{3} & r_{4}
\end{array}\right)g^{-1}\in \mat(2,2)$, there exists $N:=g\left(\begin{array}{cc}
r_{2} & \\
r_{1}+r_{4} & r_{2}
\end{array}\right)g^{-1}$ satisfies the second equation in (\ref{xycommutecond}). Hence, the possible number of $x$ is the product of possible numbers of $B$ as in (\ref{MBexpre}) and $R$, which is $q^{2}(q-1)$ and $q^{4}$, respectively. Therefore, we obtain that $|U_{0}\cap\Delta_{2}(t)|=q^{6}(q-1)$ and $|U_{0}\cap\Delta_{3}(t)|=q^{5}(q-1)(q^{2}-q-1)$.

Now we assume that $\Ch(F)\neq 2$. Similarly, we may assume that
\[x=\left(\begin{array}{c|c}
	-I_{2}-BR & B\\
	\hline  -2R-RBR & I_{2}+RB
\end{array}\right)
=\left(\begin{array}{c|c}
I_{2}\\\hline
R & I_{2}
\end{array}\right)\left(\begin{array}{c|c}
-I_{2} & B\\\hline
& I_{2}
\end{array}\right)\left(\begin{array}{c|c}
I_{2}\\\hline
-R & I_{2}
\end{array}\right),\]
where $B, R\in \mat(2,2)$. If $x\in \Delta_{1}(t)$, then $R=B=0$, thus $|\Delta_{1}(t)\cap U_{0}|=1$. We assume that at least one of $R$ and $B$ is nonzero. We shall find $|\Delta_{2}(t)\cap U_{0}|$. If $B=0$ and $R\neq 0$ or $B\neq 0$ and $R=0$, then $x$ is of the following form
\begin{equation}\label{formBR}
x=\left(\begin{array}{c|c}
-I_{2} & \\
\hline -2R & I_{2}
\end{array}\right)\text{ or }\left(\begin{array}{c|c}
-I_{2} & B\\
\hline  & I_{2}
\end{array}\right),
\end{equation}
respectively. It follows by Proposition \ref{lowertriangular} that they are all in $\Delta_{2}(t)$ and the number of possible $x$ is $q^{4}-1$ for each case. From now on we assume that $B\neq 0$ and $R\neq 0$.

If $y\in \Delta_{1}(t)$ such that $x\sim y$, then the involution $y$ is of the following form
\[y=\left(\begin{array}{c|c}
I_{2}\\\hline
R & I_{2}
\end{array}\right)\left(\begin{array}{c|c}
M & \\\hline
& N
\end{array}\right)\left(\begin{array}{c|c}
I_{2}\\\hline
-R & I_{2}
\end{array}\right)\]
for some involutions $M, N$ such that $RM=NR$ and $BN=MB$. Let $M=gI(2,1)g^{-1}$ and $N=hI(2,1)h^{-1}$ for some $g,h\in \GL_{2}(F)$. Then, we have
\[B=g\left(\begin{array}{cc}
a\\
& b
\end{array}\right)h^{-1} \text{ and } R=h\left(\begin{array}{cc}
c\\
& d
\end{array}\right)g^{-1}
\]
for some $a, b, c, d\in F$. Hence, if $x\in \Delta_{2}(t)$, then $x$ is of the following form
\[x=\left(\begin{array}{cc|cc}
-1+ac &  & a\\
& -1+bd &  & b\\\hline
-2c-ac^{2} &  & 1+ac\\
& -2d-bd^{2} &  & 1+bd
\end{array}\right),\]
up to conjugation in the subgroup $H'$ of block diagonal matrices. Using this, we obtain an $H'$-orbit decomposition with the following representatives
\[\left(\begin{array}{cc|cc}
-1 &  & \\
& \lambda-1 &  & 1\\\hline
&  & 1\\
& -\lambda^{2}-2\lambda &  & \lambda+1
\end{array}\right),
\left(\begin{array}{cc|cc}
-1 &  & \\
& \lambda-1 &  & 1\\\hline
-2 &  & 1\\
& -\lambda^{2}-2\lambda &  & \lambda+1
\end{array}\right),
\left(\begin{array}{cc|cc}
\lambda-1 &  & 1\\
& \lambda-1 &  & 1\\\hline
-\lambda^{2}-2\lambda &  & \lambda+1\\
& -\lambda^{2}-2\lambda &  & \lambda+1
\end{array}\right)
\]
with $\lambda\neq 0$ and 
\[\left(\begin{array}{cc|cc}
-1 &  & 1\\
& -1 &  & \\\hline
&  & 1\\
& -2 &  & 1
\end{array}\right),\, \left(\begin{array}{cc|cc}
-1+\lambda_{1} &  & 1\\
& -1+\lambda_{2} &  & 1\\\hline
-\lambda_{1}(2+\lambda_{1}) &  & 1+\lambda_{1}\\
& -\lambda_{2}(2+\lambda_{2}) &  & 1+\lambda_{2}
\end{array}\right)\]
with $\lambda_{1}\neq \lambda_{2}$. Then, the total order of all the orbits is
\[q^{2}(q^{2}-1)^{2}+q^{2}(q-1)^{3}(q+1)^{2}+q(q^{2}-1)(q-1)^{2}+(q^{2}-1)^{2}+\frac{q^{3}(q-1)^{3}(q+1)^{2}}{2}.\]
Hence, together with case (\ref{formBR}) we have
\[|\Delta_{2}(t)\cap U_{0}|=\frac{1}{2}(q-1)(q+1)^{2}(q^{5}-q^{3}+2q^{2}+2), |\Delta_{3}(t)\cap U_{0}|= \frac{1}{2}q(q-1)^{2}(q+1)(q^{4}+3q^{2}+2q+2).
\]\end{proof}

Combining Lemmas \ref{strucUi}, \ref{strucU1}, and \ref{strucU0} with Theorem \ref{mainthm} immediately yields the following result.
\begin{proposition}\label{fourx2}
		Let $G=\GL_{4}(F)$ over a finite field $F$ with $q$ elements. Then, $\diam\Gamma(G, X_{2})=3$. In particular, 
		\begin{align*}
			|\Delta_{1}(t)|= & \begin{cases}
				2q^4-q^3-2q^2+q-1 & \text{ if } \Ch(F)=2,\\
				q^2(q+1)^2+1 & \text{ if }\Ch(F)\neq2.
			\end{cases}\\
			|\Delta_{2}(t)|= & \begin{cases}
				q^2(q^5+q^4-q^3-4q^2+q+2) & \text{ if } \Ch(F)=2,\\
				\frac{1}{2}(q-1)(q+1)(q^6+3q^5+q^4+3q^3+8q^2+4q+4) & \text{ if } \Ch(F)\neq2.
			\end{cases}\\
				|\Delta_{3}(t)|= & \begin{cases}
					q^4(q^4-q^3-q^2+1) & \text{ if } \Ch(F)=2,\\
					\frac{1}{2}q(q-1)^2(q+1)(q^4+5q^2+6q+4) & \text{ if } \Ch(F)\neq2.
				\end{cases}
		\end{align*}
\end{proposition}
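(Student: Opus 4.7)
The diameter claim is immediate from Theorem \ref{mainthm}(ii): with $n = 4 = 2k$ and $k = 2$ even, we conclude $\diam\Gamma(G,X_2) = 3$ in both characteristics. The substance of the proposition lies in the cardinalities of the distance sets, and my plan is to leverage the partition
\begin{equation*}
X_2 = \{t\} \sqcup U_0 \sqcup U_1 \sqcup U_2
\end{equation*}
already introduced in Lemma \ref{strucUi}, so that $|\Delta_i(t)| = \sum_{j=0}^{2} |\Delta_i(t) \cap U_j|$ for each $i \in \{1,2,3\}$. Lemmas \ref{strucU1} and \ref{strucU0} record the terms with $j = 0$ and $j = 1$ explicitly, so the only input still needed is the distribution of $U_2$ across the three distance sets.

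For this I would appeal directly to Lemma \ref{vxvteq}. In characteristic $2$, that lemma asserts $d(t,x) = 1$ for every $x \in U_2$, whence $U_2 \subseteq \Delta_1(t)$ and contributes the full value $|U_2| = (q^2-1)(q^2-q)-1$ to $|\Delta_1(t)|$ and nothing to the other distances. In characteristic different from $2$, Lemma \ref{vxvteq} exhibits every $x \in U_2$ in the form $\left(\begin{array}{c|c} I_2 & \\ \hline A & -I_2 \end{array}\right)$ with $A \neq 0$; a one-line computation of $tx$ against $xt$ forces $A = -A$, hence $A = 0$, a contradiction, so no element of $U_2$ commutes with $t$ and $\Delta_1(t) \cap U_2 = \emptyset$. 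Since every such $x$ is block lower triangular and we are in the case $n = 2k$ with $k$ even, Proposition \ref{lowertriangular} gives $d(t,x) \leq 2$, so in fact $U_2 \subseteq \Delta_2(t)$ and contributes $|U_2| = q^4 - 1$ to $|\Delta_2(t)|$.

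Adding the $U_2$ contributions to those supplied by Lemmas \ref{strucU1} and \ref{strucU0} then yields the three displayed closed forms by routine polynomial simplification; as a sanity check I would verify that $1 + |\Delta_1(t)| + |\Delta_2(t)| + |\Delta_3(t)|$ agrees with $|X_2|$ as given in (\ref{fourx2card}). The only step with any genuine subtlety is the characteristic-$\neq 2$ analysis of $U_2$ above, and both of its ingredients (ruling out commutation with $t$ and pinning the distance at exactly $2$) reduce to a one-line matrix identity together with Proposition \ref{lowertriangular}; I do not anticipate any real obstacle beyond bookkeeping.
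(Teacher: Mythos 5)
Your proposal is correct and follows essentially the same route as the paper: the paper's proof is precisely the combination of Theorem \ref{mainthm} with Lemmas \ref{strucUi}, \ref{strucU1}, and \ref{strucU0}, with the $U_2$ contribution handled (as you do) via Lemma \ref{vxvteq} and Proposition \ref{lowertriangular} in the remark preceding Lemma \ref{strucU1}. Your arithmetic for all six closed forms checks out against the stated values.
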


\paragraph{\bf Acknowledgments.}
This work was partially supported by National Research Foundation of Korea (NRF) funded by the Ministry of Science, ICT and Future Planning (2016R1C1B2010037).

\end{document}